\newtheorem{theorem}{Theorem}[section]
\newtheorem{remark}[theorem]{Remark}
\newtheorem{proposition}[theorem]{Proposition}
\newtheorem{lemma}[theorem]{Lemma}
\newtheorem{corollary}[theorem]{Corollary}
\definecolor{darkred}{rgb}{0.9,0.1,0.1}
\begin{document}
\title{Large Deviations for Markov jump processes in periodic and locally periodic environments}
\author{A. Piatnitski$^{\rm a,b}$, \ S. Pirogov$^{\rm b}$ and E. Zhizhina$^{\rm b}$\\[3mm]
$^{\rm a}$ \small The Arctic University of Norway, UiT, campus Narvik, Norway\\[1mm]
$^{\rm b}$ \small Institute for Information Transmission Problems of RAS, Moscow, Russia
}

\date{}

\maketitle

\newcommand{\eps}{\varepsilon}

\medskip

\begin{abstract}
The paper deals with
a family
of jump Markov process defined in a medium with a periodic or locally periodic microstructure.
We assume that the generator of the process is a zero order convolution type operator
with rapidly oscillating locally periodic coefficient and, under natural ellipticity and
localization conditions, show that the family satisfies the large deviation principle
in the path space equipped with Skorokhod topology.  The corresponding rate function is defined
in terms of a family of auxiliary periodic spectral problems.
\end{abstract}
\medskip
{\bf Keywords:} Large deviations, jump Markov processes, locally periodic microstructure, biased convolution type operators.

\newtheorem{Theorem}{Theorem}
\newtheorem{Proposition}{Proposition}

\section{Introduction}
The goal of this work is to show that for a family of jump Markov process defined in a $d$-dimensional medium with a (locally)
periodic microstructure the large deviation principle holds. We assume that the generators of these processes
are of the form
\begin{equation}\label{generator_intro}
{\textstyle
A^\varepsilon u (x) = \frac{1}{\varepsilon^{d+1}} \int_{\mathbb{R}^d} a(\frac{x-y}{\varepsilon}) \Lambda^\eps(x,y) (u(y) - u(x)) dy,  }
\end{equation}
where $\eps$ is a small positive parameter that characterizes the microscopic length scale, $a(\cdot)$ is
a non-negative integrable convolution kernel that decays super exponentially at infinity, and a positive
bounded function $\Lambda^\eps$ represents the local characteristics of the medium.  We consider both the case
of a periodic function $\Lambda^\eps$, and the case of a locally periodic one.  In the former case,
$\Lambda^\eps(x,y)=\Lambda\big(\frac x\eps,\frac y\eps\big)$, where $\Lambda(\xi,\eta)$ is a periodic
function in $\mathbb R^{2d}$. In the latter case, $\Lambda^\eps(x,y)=\Lambda\big(x,y,\frac x\eps,\frac y\eps\big)$, where $\Lambda(x,y,\xi,\eta)$ is  periodic in $\xi$ and $\eta$.

Previously,  the large deviation principle for trajectories of a diffusion process with a small diffusion coefficient has been justified
in \cite{WF_70}, \cite{FW}.
It was shown that the large deviation principle holds in the space of continuous functions and that the corresponding rate function is defined as an integral along  the curve of an appropriate Lagrangian. The Lagrangian is explicitly given in terms of the coefficients of the process generator.

 Large deviation problem for a diffusion in environments with a periodic microstructure was studied for the first time
 in \cite{Bal91}, where a pure diffusion without drift has been considered. The case of a small diffusion with a drift
 in locally periodic media was studied in \cite{FrSow}.   Here the Lagrangian is defined in terms of an auxiliary PDE
 problem on the torus.

Large deviation problems for jump processes with independent increments have been investigated in \cite{Bor67},
\cite{M1993}, \cite{M2016}, \cite{Pukh94} and other works. In \cite{Bor67} the author considered the one-dimensional case.  The LDP was
obtained in the Skorokhod space with a weak topology under the Cramer condition on the convolution kernel. These results were improved
in \cite{M1993}, \cite{M2016}, where the LDP was proved in the Skorokhod space with strong topology and the topology
of uniform convergence.  In the multidimensional case similar results were obtained in \cite{Pukh94}.

A number of interesting results on large deviations for Markov processes that combine a diffusive 
behaviour and many small jumps can be found in \cite{We86}.

The monograph \cite{FeKu2005} focuses on LDP for rather general classes of 
Markov processes in metric spaces. 
The approaches developed in this book rely on exponential tightness,
convergence of nonlinear contraction semigroups and theory of viscosity
solutions of nonlinear equations. In particular, this allows to consider the case 
of processes whose rate function might be finite for sample
paths with discontinuities.

To our best knowledge, large deviation problems for jump Markov processes in environments with
a periodic microstructure have not been studied in the existing literature.

In the present paper we consider a family of jump Makov processes $\xi^\eps(t)$, $0\leq t\leq T$,  with the generator defined in \eqref{generator_intro}.  Under the assumptions that the convolution kernel $a(\cdot)$ is integrable and decays super exponentially
at infinity, and that   the function $\Lambda^\eps$ is strictly positive, bounded and has a periodic or locally periodic
microstructure we prove that the family       $\{\xi^\eps(t)\}$ satisfies the large deviation principle in the Skorokhod
space $\mathbf{D}([0,T];\mathbb R^d)$ equipped with the strong topology. The corresponding rate function is good, it is  finite only for absolutely continuous functions and is given by
$$
I(\gamma(\cdot))=\int_0^T L\big(\gamma(t),\dot\gamma(t)\big)dt,
$$
where the Lagrangian $L(x,\zeta)$ is convex and has a  super linear growth as a function of $\zeta$ while in $x$
it is continuous. This Lagrangian is constructed in terms of a family of auxiliary periodic spectral problems
for operators which are derived from the generator of the process by the exponential transformation.

It turns out that the said Lagrangian need not be strictly convex. This is one of the interesting features of the studied problem.  More precisely, the Lagrangian is strictly convex in the vicinity of infinity. However,  its restriction
on some segments going from the origin can be a linear function. In Section \ref{s_perio} we provide an example of
such a Lagrangian.

The paper is organized as follows. In Section \ref{s_setup} we introduce the studied  family of jump Markov
 processes and provide all our assumptions.

 In Section \ref{s_indep} we recall some of the existing large deviation results for jump process with independent increments.

The case of purely periodic environment is considered in Section \ref{s_perio}. First we introduce a family of
auxiliary operators with periodic coefficients, consider the corresponding spectral problems on the torus, and study the structure of their spectrum. Then we define the Hamiltonian and the Lagrangian that are required for formulating the large deviation results, and investigate their properties.   In the last part of this section
we formulate and  prove the large deviation theorems, first for the distribution of the process in $\mathbb R^d$
at a fixed time, and then in the path space.

Section \ref{s_slowly} deals with the media that do not depend on fast variables. Here we combine the results obtained for the processes with independent increments and perturbation theory arguments. Although this idea
is very natural and not new, its realization requires a number of quite delicate technical statements.

Finally, in the last section we consider the generic case of locally periodic media.

%


\section{Problem setup}\label{s_setup}

We consider a family of continuous time jump Markov processes $\xi_{x_0}^\varepsilon(t)$ in environments with locally periodic microstructure that depend on a small parameter $\eps>0$; the subindex $x_0$ indicates the starting point:  $\xi_{x_0}^\eps(0)=x_0$.
The generator of this process has the form
\begin{equation}\label{A}
A^\varepsilon u (x) = \frac{1}{\varepsilon^{d+1}} \int_{\mathbb{R}^d} a(\frac{x-y}{\varepsilon}) \Lambda(x,y, \frac{x}{\varepsilon},  \frac{y}{\varepsilon}) (u(y) - u(x)) dy,
\end{equation}
$u \in L^2(\mathbb{R}^d)$. We call $x,y$ slow variables and $\frac{x}{\varepsilon}, \frac{y}{\varepsilon}$ fast variables.

Our goal is to show that, under proper ellipticity and exponential moment conditions, the large deviation principle
holds for this family of Markov processes.  In this section we introduce these conditions.

\bigskip
For the function $a(z)$ we assume that
\begin{equation}\label{M1}
a(z) \in  L^{1}(\mathbb R^d) \cap L^\infty(\mathbb R^d), \; \; a(z) \ge 0,\quad
\| a \|_{L^1(\mathbb R^d)}  = \int_{\mathbb R^d} a(z) \, dz = 1,
\end{equation}
and the convolution kernel $a(z)$ satisfies the following upper bound with some  ${p} > 1$, $k>0,\, \mathtt{C}>0$:
\begin{equation}\label{lt}
0 \le a(z) \le \mathtt{C} e^{- k |z|^{p}}.
\end{equation}
The latter condition
implies in particular  that all exponential moments are bounded.\\
We assume furthermore that  for all $\alpha$ from the unit sphere  $S^{d-1}$ we have
\begin{equation}\label{M2-bis}
\int_{\Pi_\alpha} a (z) dz >0 \quad   \mbox{with } \; \Pi_\alpha  = \{ z \in \mathbb R^d, \ z \cdot \alpha >0 \}.
\end{equation}
Observe, that the integral  $\int_{\Pi_\alpha} a (z) dz$ is a continuous function of $\alpha \in S^{d-1}$ and,
therefore,
\begin{equation}\label{M2_conseq}
 \min\limits_{\alpha \in S^{d-1}} \int_{\Pi_\alpha} a (z) dz \geq C_0
\end{equation}
for some $C_0>0$.

The function $\Lambda(x,y, \xi, \eta) $ describes the locally periodic environment. We assume that the function $\Lambda$ is periodic in $\xi$ and $\eta$,
\begin{equation}\label{cond_perio}
\begin{array}{l}
\displaystyle
\Lambda(x,y, \xi+j', \eta+j'')=\Lambda(x,y, \xi, \eta)\quad\hbox{for all }j',\,j''\in\mathbb Z^d\\
\hfill\hbox{and for all }x,\,y,\,\xi,\,\eta\in\mathbb R^d,
\end{array}
\end{equation}
and that
\begin{equation}\label{cond_conti}
\begin{array}{r}
\Lambda(x,y, \xi, \eta) \quad \hbox{is uniformly contunuous in $x$ and $y$}\\
\hbox{and measurable in $(\xi,\eta)$
for each $x$ and $y$.}
\end{array}
\end{equation}
We assume furthermore that $\Lambda$ is bounded from above and from below:
\begin{equation}\label{lm}
0 < \Lambda^- \le \Lambda(x,y, \xi, \eta) \le \Lambda^+ < \infty.
\end{equation}

\section{Processes with independent increments}
\label{s_indep}

We start with the case of constant $\Lambda$:  $\Lambda^\eps(x,y)\equiv \Lambda$. In this case $\xi_x^\eps(\cdot)$ is a continuous time process with independent increments, or equivalently a compound Poisson process. The results on large deviations under condition \eqref{lt} are well known, see e.g. \cite{Bor67}. In \cite{LS, M1993, M2016} the authors considered  a wider class of the compound Poisson processes that have exponential moments only in a neighborhood of zero.
Let us shortly repeat the construction of the rate function and the Lagrangian for this process.

In this section the dependence of $\Lambda$ is indicated explicitly,  $\xi_{x,\Lambda}^{\varepsilon}(t)$ stands for a continuous time process with independent increments whose generator is defined by
\begin{equation}\label{A-1eps}
A_\Lambda^\varepsilon u (x) = \frac{\Lambda}{\varepsilon^{d+1}} \int_{\mathbb{R}^d} a(\frac{x-y}{\varepsilon})  (u(y) - u(x)) dy, \; u \in L^2(\mathbb{R}^d).
\end{equation}
To apply the G\"{a}rtner-Ellis theorem we consider the family of probability measures
$\mu^{\varepsilon, x}_{\Lambda,t}$ in $\mathbb R^d$ defined as the law of the random variables $\xi_{x,\Lambda}^{\varepsilon}(t)$. In what follows we assume without loss of generality that $x=0$
and drop the index $x$.
We also consider the process $\xi_\Lambda(t)$ generated by
\begin{equation}\label{A-1}
A_\Lambda u (x) =  \Lambda\int_{\mathbb{R}^d} a(x-y) (u(y) - u(x)) dy, \; u \in L^2(\mathbb{R}^d).
\end{equation}
It is worth to notice that
$$
\xi_\Lambda^\varepsilon(t) = \varepsilon \, \xi_\Lambda \big( \frac{t}{\varepsilon} \big).
$$
We have
$$
{\mathbb{E}} e^{\lambda \xi_\Lambda(T)}  = e^{T H_\Lambda(\lambda)}
$$
with
\begin{equation}\label{spec}
H_\Lambda(\lambda) = \Lambda\Big( \int a(z) e^{-\lambda z} dz -1\Big)=\Lambda H(\lambda).
\end{equation}
Representation \eqref{A-1eps} for the generator $A_\Lambda^\varepsilon$ yields
\begin{equation}\label{E-eps}
{\mathbb{E}} e^{\frac{\lambda}{\varepsilon} \xi_\Lambda^{\varepsilon}(t)}  = e^{\frac{t}{\varepsilon} H_\Lambda(\lambda)},
\end{equation}
Thus, we get
\begin{equation}\label{H}
 \lim_{\varepsilon \to 0} \varepsilon \ln {\mathbb{E}} \, e^{\frac{\lambda}{\varepsilon} \xi_\Lambda^{\varepsilon}(t)}= t H_\Lambda(\lambda)=t\Lambda H(\lambda).
\end{equation}
Relation \eqref{spec} readily implies that the function $H_\Lambda(\lambda)$ is a smooth, strictly convex and of super-linear growth at infinity. Denote by $L(\zeta)$ the  Legendre transform of $ H(\lambda)$:
\begin{equation}\label{Leg1}
L(\zeta) =   \sup\limits_\lambda \big\{ \lambda \zeta -  H(\lambda) \big\}.
\end{equation}
Then the function $t\Lambda L(\frac{\zeta}{t\Lambda})$ is the Legendre transform of $ t H_\Lambda(\lambda)$:
$$
\sup\limits_\lambda \{ \lambda \zeta - t H_\Lambda(\lambda)  \} = t\Lambda \, \sup\limits_\lambda \Big\{ \lambda \frac{\zeta}{t\Lambda} -  H(\lambda) \Big\} = t\Lambda \, L\Big(\frac{\zeta}{t\Lambda}\Big).
$$
The function $L(\zeta)$ is  
non-negative, strictly convex and finite for any $\zeta\in\mathbb R^d$.
Consequently, by the G\"{a}rtner-Ellis theorem LDP holds in this case:


1) for every closed set $C \subset \mathbb{R}^d$
\begin{equation}\label{ULDB}
\limsup_{\varepsilon \to 0} \varepsilon \ln \mathbb{P}(\xi_\Lambda^\varepsilon (t) \in  C ) \le - \inf\limits_{\zeta \in  C} \Big[ t\Lambda \, L\Big(\frac{\zeta}{t\Lambda} \Big)\Big];
\end{equation}

2) for every open set $O \subset \mathbb{R}^d$
\begin{equation}\label{LLDB}
\liminf_{\varepsilon \to 0} \varepsilon \ln \mathbb{P}(\xi_\Lambda^\varepsilon (t) \in  O ) \ge
 - \inf_{\zeta \in  O} \Big[t\Lambda\, L\Big(\frac{\zeta}{t\Lambda} \Big)\Big].
\end{equation}


\begin{remark}
The case when $a(z)$ is a symmetric kernel, i.e. $a(-z) = a(z)$, and $\Lambda(\cdot) \equiv 1$ has been studied in \cite{GKPZ}.
In particular, the large deviation result for the density $v(x,t)$ of the transition probability $\Pr(\xi(t) = x| \, \xi(0)=0)$ has been proved with the rate function $\Phi(\zeta), \, x = \zeta t(1+o(1)), \; t\to\infty$, see Theorems 3.4 and 3.8, \cite{GKPZ}.
The rate function $\Phi(\zeta)$  possesses the following properties: \\
$\Phi(0)=0$,  $\Phi(\zeta)>0$ for $\zeta \neq 0$, $\Phi$ is a convex function, and
\begin{equation}\label{Phi0}
\Phi(\zeta) = \frac12 (\sigma^{-1}\zeta, \zeta) (1+ o(1)), \quad \mbox{ as } \; |\zeta| \to 0,
\end{equation}
where $\sigma$ is the covariance matrix, $\sigma_{i j} = \int_{\mathbb R^d} x_i x_j a(x) dx$.

If the function $a(x)$ satisfies a two-sided estimate
\begin{equation*}
C_2 e^{-b |x|^p} \le a(x) \le C_1 e^{-b |x|^p}, \quad p> 1,
\end{equation*}
then the following asymptotics for the rate function $\Phi(\zeta)$ holds:  
\begin{equation}\label{Phi}
\Phi (\zeta) ={\textstyle \frac p{p-1} }\big(b(p-1)\big)^{1/p} |\zeta| (\ln |\zeta|)^{\frac{p-1}{p}} \, (1+o(1)), \qquad
\hbox{as } |\zeta| \to \infty .
\end{equation}
\end{remark}

Relation \eqref{Phi} has an important consequence that will be used in the following sections.
Namely, under condition \eqref{lt}, there exists a constant $c_0=c_0(C,p,d)$ such that for all sufficiently large
$\zeta$ the inequality
\begin{equation}\label{Phi_oneside}
\Phi (\zeta) \geq c_0 |\zeta| (\ln |\zeta|)^{\frac{p-1}{p}}
\end{equation}
holds true.

\medskip
Finally, we turn to the sample path large deviations results. Denote by ${\bf P}^\varepsilon$ the distribution of paths of the process $\xi_\Lambda^\varepsilon(t), \, 0 \le t \le T,$ in the space ${\bf D}([0,T];\mathbb R^d)$.
This space is equipped with the metric
$$
\mathrm{dist}(f,g)=\inf\limits_{\pi(\cdot)}\max\Big\{ {\textstyle
\sup\limits_{0\leq s<t\leq T}\big|\log\big(\frac{\pi(t)-\pi(s)}{t-s}\big)\big|,\sup\limits_{0\leq t\leq T}|f(t)-
g(\pi(t))| } \Big\},
$$
 where the infimum is taken over all continuous strictly monotone functions $\pi$ such that
$\pi(0)=0$ and $\pi(T)=T$.  In what follows this set of functions is denoted by $\mathcal{K}$, and
$\ell(\pi)=\sup\limits_{0\leq s<t\leq T}\big|\log\big(\frac{\pi(t)-\pi(s)}{t-s}\big)\big|$.

In the case of the studied process with independent increments the large deviation principle (LDP) is valid for  the family of probability measures $\{{\bf P}^\varepsilon \}$ in the Skorokhod space equipped with topology generated by the above introduced metric,
the rate function being given by
$$
I_\Lambda(\gamma(\cdot)) = \left\{\begin{array}{ll}
\displaystyle
\int_0^T \Lambda {\textstyle L\big(\frac1\Lambda\dot\gamma(t)\big) dt},&\hbox{if }\gamma(\cdot)\ \hbox{is absolutely continuous,}\\[2.4mm]
+\infty, &\hbox{otherwise},
\end{array}
\right.
$$
with $L(\cdot)$ defined in \eqref{Leg1}.
This means that
\begin{equation}\label{ULDB_func}
\limsup_{\varepsilon \to 0} \varepsilon \ln \mathbf{P}^\eps(C ) \le - \inf\limits_{\gamma \in  C} \big[ I_\Lambda(\gamma)\big]
\end{equation}
for every closed set $C$ in ${\bf D}([0,T];\mathbb R^d)$, and
\begin{equation}\label{LLDB_func}
\liminf_{\varepsilon \to 0} \varepsilon \ln \mathbf{P}^\eps(  O ) \ge
 - \inf_{\gamma \in  O} \big[I_\Lambda(\gamma)\big]
\end{equation}
for every open set $O$ in ${\bf D}([0,T];\mathbb R^d)$.

As a consequence, for a small neighbourhood $U$ of a curve $\gamma$ we have
\begin{equation}\label{PathPDP}
\varepsilon \ln {\bf P}^\varepsilon(U) \sim - 
I_\Lambda(\gamma), \quad \mbox{as } \, \varepsilon \to 0.
\end{equation}
In the one-dimensional case   this result was proved, under slightly weaker assumptions,  by A. Mogulskii in \cite{M1993, M2016}, and then in multidimensional case by A. Pukhalskii in \cite{Pukh94}.

\section{Environment with periodic microstructure $ \Lambda(\frac{x}{\varepsilon},  \frac{y}{\varepsilon})$}
\label{s_perio}

In this section we consider the process with generator given by  \eqref{A} with $\Lambda = \Lambda(\frac{x}{\varepsilon}, \frac{y}{\varepsilon})$, where $\Lambda(\eta,\zeta)$ is a measurable periodic function satisfying the lower and upper bounds in  \eqref{lm}.

\subsection{Skewed generator}

Consider an operator
\begin{equation}\label{A_0}
A_0 u (x) = \int_{\mathbb{R}^d} a(x-y) \Lambda(x,y) u(y) dy - \int_{\mathbb{R}^d} a(x-y) \Lambda(x,y) dy \, u(x),
\end{equation}
where $\Lambda(x,y)$ is a periodic function satisfying bound \eqref{lm}, and $ u \in L^2(\mathbb{R}^d)$. Denote by  $S(t)=e^{t A_0}$ the Markov semigroup with generator $A_0$, and let  $\xi_x(t)$ be
the corresponding continuous time jump Markov process starting at $x$. Then
\begin{equation}\label{St}
(S(t) f)(x) = e^{t A_0} f(x)  = {\mathbb{E}} f(\xi_x(t)).
\end{equation}

\begin{lemma}
For any $\lambda \in \mathbb{R}^d$ and $x \in \mathbb{R}^d$
\begin{equation}\label{exp}
\mathbb{E} e^{\lambda \xi_x(t)} = e^{\lambda x} e^{t A_\lambda} 1,
\end{equation}
where $A_\lambda$ is the operator acting in the space of periodic functions $L^2(\mathbb{T}^d)$ and defined by
\begin{equation}\label{A_l}
A_\lambda v (x) = \int_{\mathbb{R}^d} a(x-y) \Lambda(x,y) e^{\lambda(y-x)}v(y) dy - \int_{\mathbb{R}^d} a(x-y) \Lambda(x,y) dy \, v(x). \end{equation}
\end{lemma}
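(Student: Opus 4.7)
The plan is to exhibit an intertwining between $A_0$ acting on $L^2(\mathbb R^d)$ and $A_\lambda$ acting on $L^2(\mathbb T^d)$ that comes from conjugation by the exponential $e^{\lambda x}$, and then to invoke the probabilistic representation \eqref{St}. Concretely, writing $f_\lambda(x)=e^{\lambda x}$ and $v$ for an arbitrary periodic function on $\mathbb R^d$, I would first verify by direct substitution into \eqref{A_0} that
\begin{equation*}
A_0\bigl(f_\lambda\, v\bigr)(x)=e^{\lambda x}\Bigl[\int_{\mathbb R^d}a(x-y)\Lambda(x,y)e^{\lambda(y-x)}v(y)\,dy-\int_{\mathbb R^d}a(x-y)\Lambda(x,y)\,dy\,v(x)\Bigr]=e^{\lambda x}A_\lambda v(x),
\end{equation*}
which is nothing but the statement that $A_\lambda=e^{-\lambda x}A_0\,e^{\lambda x}$.

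The second step is to check that $A_\lambda$ genuinely acts on $L^2(\mathbb T^d)$. Since $\Lambda(\cdot,\cdot)$ is $\mathbb Z^d$-periodic in both arguments and the kernel $a(x-y)$ depends only on the difference, a change of variables $y\mapsto y+k$ shows that $A_\lambda v(x+k)=A_\lambda v(x)$ for every $k\in\mathbb Z^d$ whenever $v$ is periodic; the integrability of $a(z)e^{-\lambda z}$, guaranteed by the super-exponential decay \eqref{lt}, makes the integrals absolutely convergent and $A_\lambda$ a bounded operator on $L^2(\mathbb T^d)$. Iterating the intertwining yields $A_0^n(f_\lambda v)=f_\lambda A_\lambda^n v$ for all $n$, and summing the exponential series gives, at least formally, $e^{tA_0}(f_\lambda v)=f_\lambda\,e^{tA_\lambda}v$. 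Plugging in $v\equiv 1$ and using \eqref{St} produces
\begin{equation*}
\mathbb E\,e^{\lambda\xi_x(t)}=\bigl(S(t)f_\lambda\bigr)(x)=e^{\lambda x}\,e^{tA_\lambda}1(x),
\end{equation*}
which is \eqref{exp}.

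The delicate point, and the one I expect to require genuine care, is that $f_\lambda=e^{\lambda x}\notin L^2(\mathbb R^d)$, so the identity $e^{tA_0}(f_\lambda)=f_\lambda\,e^{tA_\lambda}1$ cannot be read off from the $L^2$-functional calculus. I would handle this by working with the probabilistic representation and the Dyson series expansion of $e^{tA_0}$ in terms of the jump decomposition of the compound-Poisson-type process $\xi_x(t)$. Writing $A_0=B-M$ where $Bu(x)=\int a(x-y)\Lambda(x,y)u(y)\,dy$ and $M(x)=\int a(x-y)\Lambda(x,y)\,dy$ is a bounded multiplication operator, the semigroup is
\begin{equation*}
e^{tA_0}f(x)=\sum_{n=0}^{\infty}\int_{0<t_1<\cdots<t_n<t}(e^{-(t-t_n)M}B\cdots Be^{-t_1M}f)(x)\,dt_1\cdots dt_n,
\end{equation*}
and the intertwining computation of the first step applies termwise to $f=f_\lambda$; the super-exponential decay of $a$, together with the boundedness of $\Lambda$, forces the series to converge absolutely for every $x$, which both justifies the manipulation and shows that $\mathbb E\,e^{\lambda\xi_x(t)}<\infty$. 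An equivalent route, which I would mention as a check, is to verify that $u(t,x):=e^{\lambda x}e^{tA_\lambda}1(x)$ solves the Kolmogorov backward equation $\partial_t u=A_0 u$ with initial datum $f_\lambda$ pointwise, and to invoke uniqueness of bounded-in-compacts solutions of this equation for the given process.
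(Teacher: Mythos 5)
Your proposal is correct and its core is the same as the paper's: both proofs rest on the conjugation identity $A_\lambda=B_\lambda^{-1}A_0B_\lambda$ with $B_\lambda g(x)=e^{\lambda x}g(x)$, the observation that periodicity of $\Lambda$ makes $v(x,t)=e^{-\lambda x}\,\mathbb E\,e^{\lambda\xi_x(t)}$ periodic, and the passage from the conjugation of generators to the conjugation of semigroups. Where you genuinely diverge is in the one delicate step you correctly isolated, namely justifying that $\mathbb E\,e^{\lambda\xi_x(t)}$ is finite and that the semigroup identity may be applied to the non-$L^2$ function $e^{\lambda x}$. The paper dominates the transition density $p^t_x$ by $\tilde p^t_x(y)=e^{-t\Lambda^-}\delta_x(y)+e^{-t\Lambda^-}\sum_n\frac{(\Lambda^+)^nt^n}{n!}a^{\star n}(x-y)$ and then invokes a pointwise super-exponential decay estimate for $\tilde p^t_x$ (via \eqref{Phi_oneside} and the argument of \cite{ZhP}) to conclude convergence of $\int e^{\lambda y}p^t_x(y)\,dy$. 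You instead sum the same jump (Dyson) series termwise against $e^{\lambda y}$, using $\int a(z)e^{-\lambda z}dz<\infty$ from \eqref{lt} to get absolute convergence of $\sum_n\frac{(\Lambda^+t)^n}{n!}\bigl(\int a(z)e^{-\lambda z}dz\bigr)^n$. Your route is more elementary and self-contained (it needs only the finiteness of exponential moments of $a$, not the pointwise kernel asymptotics), and it simultaneously legitimizes the termwise intertwining $A_0^n(f_\lambda v)=f_\lambda A_\lambda^n v$; the paper's route yields stronger pointwise information on $p^t_x$ as a by-product but imports an external estimate. Both are valid; your secondary suggestion via uniqueness for the backward Kolmogorov equation would also work but is not needed once the series argument is in place.
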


\begin{proof}
Substitute  $f(z) = e^{\lambda z}$ in \eqref{St} and denote $u(x,t) = \mathbb{E} e^{\lambda \xi_x(t)}$. Under our
standing assumptions on $a(\cdot)$ the function $u(\cdot)$ is well defined. Indeed,
denoting
$$
\tilde p^t_x(y) = e^{-t \Lambda^-} \delta_x(y) +  e^{-t \Lambda^-} \sum_{n=1}^{\infty} \frac{(\Lambda^+)^n \, t^n}{n!}  a^{\star n}(x-y)
$$
with $\Lambda^-$ and  $\Lambda^+$ defined in \eqref{lm} we have
$$
p^t_x (y) \le \tilde p^t_x (y),
$$
where $p^t_x(\cdot) =  e^{t A_0} \delta_x (\cdot)$ is the distribution of the process $\xi_x(t)$.
Considering    \eqref{Phi_oneside},       
 in the same way as in
\cite{ZhP}, one can show that $\tilde p^t_x(y)$ does not exceed  $ \ e^{- c \frac{|x-y|}t\big( \ln \frac{|x-y|}t \big)^{\frac{p-1}{p}}}$
for some $c>0$ and for all $y$ such that $|x-y|\geq (1\vee t)$. Consequently, the integral
$ \int_{\mathbb{R}^d} e^{\lambda y} p^t_x (y) dy$ converges for any $t>0$ and $\lambda\in\mathbb R^d$,
and the function
\begin{equation}\label{St-0}
u(x,t) = \int_{\mathbb{R}^d} e^{\lambda y} p^t_x (y) dy,
\end{equation}
is well defined. Moreover,  due to periodicity of $\Lambda(x,y)$,
\begin{equation}\label{St-1}
v(x,t) = e^{- \lambda x} u(x,t)=\int_{\mathbb{R}^d} e^{\lambda (y-x)} p^t_x (y) dy= B_\lambda^{-1} e^{t A_0} B_\lambda \, 1
\end{equation}
is a periodic function of $x$, i.e. $v(\cdot,t) \in  L^2(\mathbb{T}^d)$ for any $t>0$;
here  $B_\lambda g (x) = e^{\lambda x} g(x)$. In fact, under our assumptions $v(\cdot,t)\in L^\infty(\mathbb T^d)$.
%
%
%
Since
$A_\lambda = B_\lambda^{-1} A_0 B_\lambda$, where $A_\lambda$ is defined by \eqref{A_l}, we have
$B_\lambda^{-1} e^{t A_0} B_\lambda = e^{t  A_\lambda}$.
This yields \eqref{exp}.
\end{proof}

Consequently, for any $t>0$, we have
\begin{equation}\label{St-3}
\lim_{\varepsilon \to 0} \varepsilon \, \ln \mathbb{E} e^{\frac{\lambda}{\varepsilon} \xi_0^\varepsilon(t)} = \lim_{\varepsilon \to 0} \varepsilon \, \ln\big(  [e^{t A^\varepsilon_{\lambda/\varepsilon}}1] (0)\big)
=\lim_{s\to+\infty}{\textstyle\frac1s} \ln\big(  [e^{ts A_{\lambda}}1] (0)\big),
\end{equation}
where $ A^\varepsilon_{\lambda/\varepsilon}=B^{-1}_{\lambda/\eps} A^\eps B_{\lambda/\eps}$.
It is straightforward to check that for any $\lambda\in\mathbb R^d$ the skewed operator $A_\lambda$ is bounded in $L^2(\mathbb T^d)$.
Denote by $\sigma(A_\lambda)$ the spectrum of this operator in $L^2(\mathbb T^d)$,
and by $\mathtt{s}(A_\lambda)$ the maximum of the real parts of the elements of $\sigma(A_\lambda)$.
In the next subsection we will show that the limit on the right-hand side of \eqref{St-3} exists and is equal to
$\mathtt{s}(A_\lambda)$ multiplied by $t$. Our goal is to study the properties  of $\mathtt{s}(A_\lambda)$
as a function of $\lambda$.


\subsection{The spectral properties of the operator  $A_\lambda$ }
\label{ss_prop_alambda}

The operator $A_\lambda$ defined by \eqref{A_l}
has a continuous spectrum
$$
\sigma_{\rm cont} =  [-g_{\rm max}, -g_{\rm min}] := \mathrm{Im}\{ -G(x)\}, \quad  x \in \mathbb{T}^d,
$$
if the function
$$
G(x) = \int_{\mathbb{R}^d} a(x-y) \Lambda(x,y) dy
$$
is not a constant. Letting
$$
 g_{\rm max} = \max_{x \in \mathbb{T}^d} G(x), \qquad
g_{\rm min} = \min_{x \in \mathbb{T}^d} G(x),
$$
we have $ 0<g_{\rm min}\leq g_{\rm max}<\infty$. The continuous spectrum, if exists, does not depend on $\lambda$. In addition, depending on the value of $\lambda$, $A_\lambda$ might have
a discrete spectrum $\sigma_{\rm disc}(\lambda)$.

Adding to the both sides of the spectral problem $A_\lambda v=\theta v$ the constant $g_{\rm max}$ we
obtain an equivalent spectral problem that reads $(A_\lambda+g_{\rm max}) v=(\theta+g_{\rm max}) v$.
We denote the new spectral parameter $(\theta+g_{\rm max})$ by $\vartheta$.
The operator on the left-hand side of the latter spectral problem is positive, its essential spectrum coincides with its
continuous spectrum and is equal to the real interval $[0, g_{\rm max}-g_{\rm min}]$.
According to \cite[Theorem 1]{EdPoSt72}  there are only two options. Namely, either for
any $\vartheta\in\sigma(A_\lambda+g_{\rm max})$ we have $|\vartheta|\leq  g_{\rm max}-g_{\rm min}$, or
there exists a real positive eigenvalue $\vartheta(\lambda)$ of $A_\lambda+g_{\rm max}$ such that
$\vartheta(\lambda)>|\tilde\vartheta|$ for any
$\tilde\vartheta\in\sigma(A_\lambda+g_{\rm max})\setminus \vartheta(\lambda)$. In particular, in the latter
case, $\vartheta(\lambda)>g_{\rm max}-g_{\rm min}$.  Furthermore, there is a positive eigenfunction $u_\lambda$
that  corresponds to  $\vartheta(\lambda)$.

As a consequence, either the element of $\sigma(A_\lambda)$ with the largest real part coincides with $-g_{\rm min}$, or it is equal to $\vartheta(\lambda)-g_{\rm max}$. The latter case takes place if and only if $\theta(\lambda):=\vartheta(\lambda)-g_{\rm max}>-g_{\rm min}$, in this case the real part of $\tilde\theta$
is less than $\theta(\lambda)$ for any $\tilde\theta\in\sigma(A_\lambda)\setminus\theta(\lambda)$.
The set of $\lambda\in\mathbb R^d$ such that $\theta(\lambda)>-g_{\rm min}$ is denoted by $\Gamma $, and $\theta(\lambda)$ is called the principal eigenvalue of  $A_\lambda$.
\begin{remark}
Notice that $\theta(0) = 0$, i.e. $\theta(0) > -g_{\rm min}$. Furthermore, $\theta(\lambda) \to \infty$ as $|\lambda| \to \infty$. Thus,  $0 \in \Gamma$, and $\mathbb R^d\setminus\Gamma$ is a bounded set.
\end{remark}

Assume that $\lambda\in\Gamma$. The spectral problem for $A_\lambda$ reads
\begin{equation}\label{specA}
\int\limits_{\mathbb{R}^d}\!\! a(x-y) \Lambda(x,y) e^{\lambda(y-x)}u_\lambda(y) dy
-\! \int\limits_{\mathbb{R}^d}\!\! a(x-y) \Lambda(x,y) dy \, u_\lambda(x)
= \theta(\lambda) u_\lambda (x),
\end{equation}
where $u_\lambda(x)$ is the principle eigenfunction. Denote by $u_\lambda^\star (x)$ the principle eigenfunction of the adjoint operator $A_\lambda^\star$. For $\theta(\lambda) > -g_{\rm min}$, the spectral problem \eqref{specA} is equivalent to the following problem
$$
D_\lambda u(x) = \big(G(x)+  \theta(\lambda) \big)^{-1} \int_{\mathbb{R}^d} a(x-y) \Lambda(x,y) e^{\lambda(y-x)}u_\lambda(y) dy =  u_\lambda (x)
$$
for the compact positive operator $D_\lambda$ in $L^2(\mathbb T^d)$. Since $\theta(\lambda)$ is an eigenvalue for $A_\lambda$, $1$ is an eigenvalue for $D_\lambda$.

For an arbitrary $N\in\mathbb Z^+$ denote by $\beta_N(x,y)$ the kernel of the operator $D^N_\lambda$:
\begin{equation}\label{power_of_oper}
D^N_\lambda v(x)=\int_{\mathbb T^d} \beta_N(x,y)v(y)\,dy.
\end{equation}
Then there exist $N\in\mathbb Z^+$ and constants $\beta^->0$ and $\beta^+$ such that
\begin{equation}\label{kernel_of_power}
\beta^-\leq \beta_N(x,y)\leq\beta^+ \quad\hbox{for all }x,\,y\in\mathbb T^d.
 \end{equation}
The lower bound was proved, for instance, in \cite[Lemma 4.1]{PZh2019}. The upper bound is evident.

Recalling that $u_\lambda$ is positive,   by the Krein-Rutman theorem, see e.g. \cite[\S 6, Proposition $\beta'$]{KLS},  $1$ is the principal
eigenvalue of  $D_\lambda$, and this eigenvalue is simple. Then $\theta(\lambda)$ is also simple.

From \eqref{kernel_of_power} it readily follows that both for
 $u_\lambda(x)$ and for $u_\lambda^\star(x)$ the following bounds hold
 \begin{equation}\label{est_eiggg}
c^-\leq u_\lambda(x)\leq c^+ \quad\hbox{and}\quad c^-\leq u^\star_\lambda(x)\leq c^+ \quad
\hbox{for all }x \in\mathbb T^d
 \end{equation}
 for some constants $c^->0$ and $c^+$.
 In what follows we assume the following normalization conditions to hold:
\begin{equation}\label{normalization}
\int_{\mathbb{T}^d} u_\lambda(x) dx =1, \qquad \int_{\mathbb{T}^d} u_\lambda(x) u_\lambda^\star (x) dx = 1.
\end{equation}

We now turn to relation \eqref{St-3}.
\begin{lemma}\label{l_growth_a_lambda}
The limit on the right-hand side of \eqref{St-3}  exists and
is equal to $t \,\mathtt{s}(A_\lambda)$.
\end{lemma}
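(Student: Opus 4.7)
The goal is to prove $\lim_{s\to\infty}\frac{1}{s}\ln[e^{ts A_\lambda}1](0)=t\,\mathtt{s}(A_\lambda)$. Via the trivial rescaling $s\mapsto s/t$ this reduces to the case $t=1$, and I shall show $\lim_{s\to\infty}\frac{1}{s}\ln[e^{sA_\lambda}1](0)=\mathtt{s}(A_\lambda)$. Since $A_\lambda$ is bounded on $L^2(\mathbb{T}^d)$, the spectral mapping theorem gives $r(e^{sA_\lambda})=e^{s\,\mathtt{s}(A_\lambda)}$, so Gelfand's formula together with the subadditivity of $s\mapsto\ln\|e^{sA_\lambda}\|$ yields $\lim_{s\to\infty}\frac{1}{s}\ln\|e^{sA_\lambda}\|_{L^2\to L^2}=\mathtt{s}(A_\lambda)$. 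I then treat separately the two scenarios identified in Section~\ref{ss_prop_alambda}: (A) $\lambda\in\Gamma$, where $\mathtt{s}(A_\lambda)=\theta(\lambda)$ is a simple isolated eigenvalue; (B) $\lambda\notin\Gamma$, where $\mathtt{s}(A_\lambda)=-g_{\min}$ is realised on the essential spectrum.

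The upper bound is obtained via Duhamel smoothing and is uniform in the two cases. Writing $A_\lambda=\tilde K_\lambda-G(x)$ with $\tilde K_\lambda f(x)=\int_{\mathbb R^d}a(x-y)\Lambda(x,y)e^{\lambda(y-x)}f(y)\,dy$, and periodising the integral as a sum over $\mathbb{Z}^d$, the super-exponential decay \eqref{lt} makes the resulting kernel $K_\lambda(x,y)=\sum_{n\in\mathbb Z^d}a(x-n-y)\Lambda(x,n+y)e^{\lambda(n+y-x)}$ bounded on $\mathbb{T}^d\times\mathbb{T}^d$ for every $\lambda\in\mathbb R^d$, so $\tilde K_\lambda$ extends to a bounded map $L^2(\mathbb{T}^d)\to L^\infty(\mathbb{T}^d)$. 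The Duhamel identity
\[
e^{sA_\lambda}1 = e^{-sG(x)} + \int_0^s e^{-(s-r)G(x)}\,\tilde K_\lambda\bigl(e^{rA_\lambda}1\bigr)\,dr,
\]
combined with $\|e^{rA_\lambda}1\|_{L^2}\le C_\eta e^{r(\mathtt{s}(A_\lambda)+\eta)}$, yields $\|e^{sA_\lambda}1\|_{L^\infty}\le C'_\eta e^{s(\mathtt{s}(A_\lambda)+\eta)}$ for every $\eta>0$. Since $e^{sA_\lambda}1$ is pointwise positive, this gives $\limsup_{s\to\infty}\frac{1}{s}\ln[e^{sA_\lambda}1](0)\le\mathtt{s}(A_\lambda)$.

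For the lower bound in case~(A) I use the spectral decomposition $e^{sA_\lambda}=e^{s\theta(\lambda)}P+R(s)$, where $Pf=\langle f,u_\lambda^\star\rangle u_\lambda$ is the rank-one projection onto the principal eigenspace and, by the spectral gap, $\|R(s)\|_{L^2\to L^2}\le Ce^{s\theta'}$ for some $\theta'<\theta(\lambda)$. Applying the same Duhamel argument to $R(s)1=e^{sA_\lambda}(1-P1)$ promotes this to $\|R(s)1\|_{L^\infty}\le C'e^{s\theta''}$ with $\theta''<\theta(\lambda)$. Since $[P1](0)=\bigl(\int_{\mathbb{T}^d}u_\lambda^\star\bigr)u_\lambda(0)\ge(c^-)^2>0$ by \eqref{est_eiggg}, we obtain $[e^{sA_\lambda}1](0)=e^{s\theta(\lambda)}(c+o(1))$ with $c>0$, giving the matching lower bound. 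For case~(B) I use the probabilistic identity $[e^{sA_\lambda}1](0)=\mathbb{E}\,e^{\lambda\xi_0(s)}$ from~\eqref{exp}. Fix $\eta>0$, let $\xi^\star\in[0,1]^d$ attain $g_{\min}=G(\xi^\star)$ (possible by continuity and periodicity of $G$), and choose a neighborhood $U\ni\xi^\star$ on which $G\le g_{\min}+\eta$. Using a Harnack-type lower bound on some iterate of the transition kernel of the torus-valued process $\xi_0\bmod\mathbb Z^d$, itself obtained from the non-degeneracy \eqref{M2-bis}--\eqref{M2_conseq} in the spirit of \eqref{kernel_of_power}, there exist $T_0>0$, $R>0$ and $c_1>0$ such that the event $E=\{\xi_0(T_0)\in U+\mathbb Z^d,\ |\xi_0(T_0)|\le R\}$ has probability at least $c_1$ independently of $s$. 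Conditional on $E$, the probability of no further jump during $[T_0,s]$ is at least $e^{-(s-T_0)(g_{\min}+\eta)}$, and on this compound event $|\xi_0(s)|=|\xi_0(T_0)|\le R$, so $e^{\lambda\xi_0(s)}\ge e^{-|\lambda|R}$. Hence $\mathbb{E}\,e^{\lambda\xi_0(s)}\ge c_2(\eta)\,e^{-s(g_{\min}+\eta)}$, and letting $\eta\downarrow 0$ delivers $\liminf_{s\to\infty}\frac{1}{s}\ln[e^{sA_\lambda}1](0)\ge-g_{\min}$.

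The main obstacle is the case~(B) lower bound: one must realise the essential-spectrum rate $-g_{\min}$ by an explicit probabilistic event concentrated near a torus minimizer of $G$, which requires a positivity estimate of Harnack type for iterates of the transition kernel on $\mathbb{T}^d$ and control of large jumps during the initial time via the exponential moment bound implicit in \eqref{lt}. The Duhamel smoothing step underpinning both the upper bound and case~(A) crucially uses the same super-exponential decay \eqref{lt} to make the periodised kernel $K_\lambda$ well-defined for \emph{every} $\lambda\in\mathbb{R}^d$.
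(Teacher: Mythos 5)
Your proof is correct, but it follows a genuinely different route from the paper's in all three steps, and in one place it is actually more complete. For the upper bound the paper simply invokes \cite[Corollary IV.2.4]{EnNa99} for the growth of $\|e^{tsA_\lambda}\|_{\mathcal L(L^2)}$ and treats the passage to the pointwise value at $0$ as immediate; your Duhamel identity $e^{sA_\lambda}1=e^{-sG}+\int_0^s e^{-(s-r)G}\tilde K_\lambda e^{rA_\lambda}1\,dr$, with $\tilde K_\lambda:L^2(\mathbb T^d)\to L^\infty(\mathbb T^d)$ bounded thanks to \eqref{lt}, makes that $L^2$-to-pointwise step explicit (note only that to get the rate $\mathtt{s}(A_\lambda)+\eta$ rather than $\max(0,\mathtt{s}(A_\lambda)+\eta)$ from the time integral you must keep the factor $e^{-(s-r)g_{\rm min}}$ and use $\mathtt{s}(A_\lambda)\ge -g_{\rm min}$). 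In case (A) the paper argues much more cheaply: since $1\ge (c^+)^{-1}u_\lambda$ by \eqref{est_eiggg} and $e^{tsA_\lambda}$ is positivity preserving, $[e^{tsA_\lambda}1](0)\ge (c^+)^{-1}e^{ts\theta(\lambda)}u_\lambda(0)\ge c\,e^{ts\theta(\lambda)}$; your spectral-projection decomposition is also valid (the gap exists because $\theta(\lambda)$ is an isolated, modulus-dominant eigenvalue after the shift by $g_{\rm max}$) but is heavier than necessary. In case (B) your argument is the more robust one: the paper compares with the multiplication semigroup $e^{ts\mathcal G}$ and evaluates at the starting point, which literally yields $[e^{ts\mathcal G}1](0)=e^{-tsG(0)}$ and hence only the rate $-tG(0)\le -tg_{\rm min}$, so as written it does not reach $-g_{\rm min}$ unless $0$ happens to minimize $G$. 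Your ``move in finite time to a torus neighbourhood of a minimizer of $G$, then hold'' construction genuinely attains $-g_{\rm min}$; the only ingredient you assert rather than prove is the positivity of an iterate of the transition kernel on $\mathbb T^d$, which is of exactly the same nature as \eqref{kernel_of_power} and \cite[Lemma 4.1]{PZh2019}, so this is an acceptable appeal.
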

\begin{proof}
According to \cite[Corollary IV.2.4]{EnNa99}, the following relation holds:
$$
 \lim_{s\to+\infty}{\textstyle\frac1s} \ln\big\| e^{ts A_{\lambda}}\big\|_{\mathcal{L}(L^2(\mathbb T^d),L^2(\mathbb T^d))}=t \mathtt{s}(A_\lambda).
$$
This readily yields an upper bound
$$
\limsup_{s\to+\infty}{\textstyle\frac1s} \ln\big(  [e^{ts A_{\lambda}}1] (0)\big)\leq t \mathtt{s}(A_\lambda).
$$

To obtain the lower bound we consider separately the cases $\lambda\in\Gamma$ and
$\lambda\in\mathbb R^d\setminus\Gamma$. If $\lambda\in\Gamma$, then
$\mathtt{s}(A_\lambda)=\theta(\lambda)$, and the inequality
$$
\liminf_{s\to+\infty}{\textstyle\frac1s} \ln\big(  [e^{ts A_{\lambda}}1] (0)\big)\geq t \mathtt{s}(A_\lambda)
$$
follows from the facts that $u_\lambda$ is positive  and that $e^{ts A_{\lambda}}$ is a positive operator.

If $\lambda\in\mathbb R^d\setminus\Gamma$ then $\mathtt{s}(A_\lambda)=-g_{\rm min}$. Consider an
auxiliary semigroup with the generator $(\mathcal{G}u)(x)=-G(x)u(x)$. It is straightforward to check that
$$
\lim_{s\to+\infty}{\textstyle\frac1s} \ln\big(  [e^{ts \mathcal{G}}1] (0)\big)= -t g_{\rm min}.
$$
Since the operator $A_\lambda-\mathcal{G}
=(A_\lambda+g_{\rm max})-(\mathcal{G}+g_{\rm max})$ is positive, the operator $e^{st A_\lambda}-e^{st \mathcal{G}}$
is also positive, and we conclude that
$$
\liminf_{s\to+\infty}{\textstyle\frac1s} \ln\big(  [e^{ts A_\lambda}1] (0)\big)\geq -t g_{\rm min}.
$$
This completes the proof.
\end{proof}
Our next statement describes the behaviour of $\theta(\lambda)$ at infinity.
\begin{lemma}\label{l_exp_growth}
  There exists $R_0>0$ such that $\mathtt{s}(A_\lambda)>-g_{\rm min}$  for all $\lambda$ with
  $|\lambda|\geq R_0$. Moreover, there exist constants $c_e>0$, $c_a>0$ and $C_s$ such that
  $$
  \theta(\lambda)\geq c_a e^{c_e|\lambda|}-C_s
  $$
  for all $\lambda\in\{\lambda\in\mathbb R^d\,:\,|\lambda|\geq R_0\}$.
\end{lemma}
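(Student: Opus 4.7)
The plan is to test $A_\lambda$ against the constant function $v\equiv 1\in L^2(\mathbb T^d)$, obtain a pointwise lower bound of order $e^{c_e|\lambda|}$, and then transfer it to a spectral lower bound via the positivity of the semigroup $e^{sA_\lambda}$. Substituting $v\equiv 1$ into \eqref{A_l} and changing variables $z=x-y$ gives
$$
(A_\lambda 1)(x)\;=\;\int_{\mathbb R^d} a(z)\,\Lambda(x,x-z)\,e^{-\lambda\cdot z}\,dz\;-\;G(x).
$$
Writing $\alpha=-\lambda/|\lambda|\in S^{d-1}$ we have $e^{-\lambda\cdot z}=e^{|\lambda|\,\alpha\cdot z}$; on the half-space $\Pi_\alpha$ this is $\geq 1$, and on the slab $\{\alpha\cdot z\geq \delta\}$ it is at least $e^{|\lambda|\delta}$, which is the source of the exponential growth.

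The heart of the argument is selecting such a $\delta$ uniformly in $\alpha$. For each fixed $\delta>0$ the function $\alpha\mapsto \int_{\{\alpha\cdot z>\delta\}}a(z)\,dz$ is continuous on $S^{d-1}$ (dominated convergence, using \eqref{M1}), and it increases monotonically, as $\delta\downarrow 0$, to the continuous limit $\alpha\mapsto\int_{\Pi_\alpha}a(z)\,dz$, which by \eqref{M2_conseq} is bounded below by $C_0>0$. Since $S^{d-1}$ is compact, Dini's theorem yields a $\delta_0>0$ such that $\int_{\{\alpha\cdot z\geq \delta_0\}}a(z)\,dz\geq C_0/2$ for every $\alpha\in S^{d-1}$. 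Combined with $\Lambda\geq \Lambda^->0$ from \eqref{lm} and $G(x)\leq g_{\rm max}$, this produces the uniform bound
$$
(A_\lambda 1)(x)\;\geq\;\tfrac12\,\Lambda^- C_0\,e^{\delta_0|\lambda|}\;-\;g_{\rm max}\;=:\;m(\lambda),\qquad x\in\mathbb T^d.
$$

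It remains to upgrade this into a spectral lower bound. The operator $A_\lambda+g_{\rm max}$ is the sum of a non-negative integral operator and multiplication by $g_{\rm max}-G(x)\geq 0$, so $e^{sA_\lambda}=e^{-sg_{\rm max}}e^{s(A_\lambda+g_{\rm max})}$ is a positive semigroup. Since $A_\lambda$ commutes with $e^{sA_\lambda}$, the pointwise inequality $(A_\lambda-m(\lambda))\,1\geq 0$ implies $\frac{d}{ds}\bigl(e^{-sm(\lambda)}e^{sA_\lambda}1\bigr)=e^{-sm(\lambda)}e^{sA_\lambda}(A_\lambda-m(\lambda))\,1\geq 0$, so $[e^{sA_\lambda}1](0)\geq e^{sm(\lambda)}$; Lemma \ref{l_growth_a_lambda} then yields $\mathtt{s}(A_\lambda)\geq m(\lambda)$. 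Choosing $R_0$ so large that $m(\lambda)>-g_{\rm min}$ whenever $|\lambda|\geq R_0$ forces $\lambda\in\Gamma$, whence $\theta(\lambda)=\mathtt{s}(A_\lambda)\geq c_a e^{c_e|\lambda|}-C_s$ with $c_a=\Lambda^-C_0/2$, $c_e=\delta_0$, $C_s=g_{\rm max}$. The main obstacle is the uniformity in $\alpha$ in the middle step; the positivity argument at the end is a standard Collatz--Wielandt-type manipulation.
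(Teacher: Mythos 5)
Your proof is correct and follows essentially the same route as the paper: both arguments establish the uniform pointwise bound $[(A_\lambda+g_{\rm max})1](x)\geq c_a e^{c_e|\lambda|}$ from \eqref{M2-bis}, \eqref{M2_conseq} and compactness of $S^{d-1}$, and then convert it into a spectral lower bound via positivity. The only (immaterial) differences are that you extract the uniform slab width $\delta_0$ via Dini's theorem where the paper picks balls $Q^\alpha$ at positive distance from $\partial\Pi_\alpha$ and uses a continuity--compactness argument, and that you pass to the spectral bound through the positive semigroup and Lemma \ref{l_growth_a_lambda} where the paper iterates the positive operator $(A_\lambda+g_{\rm max})^n$.
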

\begin{proof}
It follows from \eqref{M1} and \eqref{M2_conseq} that for any $\alpha\in S^{d-1}$ there exist a ball $Q^\alpha\subset \Pi_\alpha$ such that
$$
c^\alpha_1:=\mathrm{dist}(Q^\alpha,\partial\Pi_\alpha)>0\quad
\hbox{and } \ c^\alpha_2:=\int_{Q^\alpha}a(-z)\,dz>0.
$$
Then,  for $\lambda=r\alpha$ with $r>0$ we have
$$
\int_{\mathbb R^d}a(x-y)e^{\lambda\cdot(y-x)}\Lambda(x,y)dy\geq  \Lambda^-c_2^\alpha e^{c_1^\alpha r}=
\Lambda^- c_2^\alpha e^{c_1^\alpha |\lambda|}
$$
By the continuity argument,
$$
\int_{\mathbb R^d}a(x-y)e^{\lambda\cdot(y-x)}\Lambda(x,y)dy\geq
\Lambda^- c_2^\alpha e^{\frac12c_1^\alpha |\lambda|}
$$
if $\frac \lambda{|\lambda|}$ belongs to a sufficiently small neighbourhood of $\alpha$. Due to the compactness
of $S^{d-1}$ this implies that for some $c_a>0$ and $c_e>0$ the inequality
$$
\int_{\mathbb R^d}a(x-y)e^{\lambda\cdot(y-x)}\Lambda(x,y)dy\geq
c_a e^{c_e |\lambda|}
$$
holds for all $\lambda\in \mathbb R^d$. Therefore,
$[(A_\lambda+g_{\rm max})1](x)\geq c_a e^{c_e |\lambda|}$. Since the operator $A_\lambda+g_{\rm max}$
is positive, this yields
$[(A_\lambda+g_{\rm max})^n1](x)\geq c^n_a e^{nc_e |\lambda|}$ for any $n\in\mathbb Z^+$, and we conclude
that $\vartheta(\lambda)\geq c_a e^{c_e |\lambda|}$, and
$\theta(\lambda)\geq c_a e^{c_e |\lambda|}-g_{\rm max}$.
\end{proof}

\subsection{Strict convexity of the principal eigenvalue $\theta(\lambda)$ of the operator  $A_\lambda$ }
\label{ss_strict_conv}

\begin{theorem}\label{SP}
The function $\theta(\lambda)$  is strictly convex on $\Gamma$, i.e.
$\frac{\partial^2 \theta}{\partial \lambda_i \partial \lambda_j} (\lambda)$
is a positive definite matrix for all $\lambda \in \Gamma$.
\end{theorem}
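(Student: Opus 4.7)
The plan is: (i) apply Kato's analytic perturbation theory to express $\nabla^2\theta(\lambda)$ as an explicit bilinear form; (ii) rewrite that form as a Dirichlet-type sum of squares associated with the Doob $h$-transform of $A_\lambda$; (iii) exclude degenerate directions using the ellipticity \eqref{M2_conseq}. Since $\lambda\mapsto A_\lambda$ is entire ($\lambda$ enters only through $e^{\lambda\cdot(y-x)}$) and, by Subsection~\ref{ss_prop_alambda}, the eigenvalue $\theta(\lambda)$ is algebraically simple and isolated from $\sigma(A_\lambda)\setminus\{\theta(\lambda)\}$ for $\lambda\in\Gamma$, Kato's theorem produces real-analytic families $u_\lambda$, $u_\lambda^\star$, and $\theta(\lambda)$ on $\Gamma$. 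Using the normalization \eqref{normalization} and choosing $\partial_\mu u_\lambda\in(u_\lambda^\star)^\perp$, differentiating $A_\lambda u_\lambda=\theta(\lambda)u_\lambda$ twice and pairing with $u_\lambda^\star$ yields
\[
\mu^{\top}\nabla^2\theta(\lambda)\mu=\langle u_\lambda^\star,B_{\mu\mu}u_\lambda\rangle+2\langle u_\lambda^\star,B_\mu(\partial_\mu u_\lambda)\rangle,
\]
where $B_\mu:=\partial_\mu A_\lambda$ inserts a factor $\mu\cdot(y-x)$ into the kernel of $A_\lambda$, and $\partial_\mu u_\lambda$ solves $(A_\lambda-\theta)\partial_\mu u_\lambda=(\partial_\mu\theta-B_\mu)u_\lambda$.

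Next I introduce the Doob $h$-transform $\tilde A_\lambda f:=u_\lambda^{-1}(A_\lambda-\theta)(u_\lambda f)$. Using $A_\lambda u_\lambda=\theta u_\lambda$ and $A_\lambda^\star u_\lambda^\star=\theta u_\lambda^\star$ one verifies that $\tilde A_\lambda$ is a Markov generator on $\mathbb{T}^d$ with jump kernel $\tilde k_\lambda(x,y)=a(x-y)\Lambda(x,y)e^{\lambda\cdot(y-x)}u_\lambda(y)/u_\lambda(x)$ and invariant density $\tilde\pi_\lambda=u_\lambda u_\lambda^\star$. Writing $\partial_\mu u_\lambda=w\,u_\lambda$, the perturbation equation becomes the Poisson equation
\[
\tilde A_\lambda w=\partial_\mu\theta-\tilde v_\mu,\qquad \tilde v_\mu(x):=\int\tilde k_\lambda(x,y)\,\mu\cdot(y-x)\,dy,\qquad \int w\,d\tilde\pi_\lambda=0,
\]
uniquely solvable in $L^\infty(\mathbb{T}^d)$ (the spectral gap of $\tilde A_\lambda$ follows from \eqref{kernel_of_power}, and the right-hand side is $\tilde\pi_\lambda$-mean-zero since $\int\tilde v_\mu\,d\tilde\pi_\lambda=\partial_\mu\theta$ by the first-derivative identity). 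Substituting $\partial_\mu u_\lambda=wu_\lambda$ back into the Hessian formula, then using the Poisson equation, the invariance $\int\tilde\pi_\lambda\tilde A_\lambda f\,dx=0$, and the carré-du-champ identity $\int\tilde k_\lambda(x,y)(w(y)-w(x))^2 dy=\tilde A_\lambda(w^2)(x)-2w(x)\tilde A_\lambda w(x)$, a short algebraic rearrangement exploiting $\int w\,d\tilde\pi_\lambda=0$ produces
\[
\mu^{\top}\nabla^2\theta(\lambda)\mu=\int_{\mathbb{T}^d}\tilde\pi_\lambda(x)\int_{\mathbb{R}^d}\tilde k_\lambda(x,y)\bigl[\mu\cdot(y-x)+w(y)-w(x)\bigr]^2\,dy\,dx.
\]
Probabilistically this is the asymptotic variance per unit time of $\mu\cdot\tilde\xi_\lambda(t)$, obtainable from Dynkin's decomposition of $\mu\cdot\tilde\xi_\lambda(t)-t\partial_\mu\theta$ into the compensated displacement martingale and the compensated $w(\tilde\xi_\lambda)$-martingale.

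Positive definiteness then follows from a rigidity argument. If $\mu^{\top}\nabla^2\theta(\lambda)\mu=0$ for some $\mu\ne 0$, substituting $z=y-x$ and using \eqref{lm} and \eqref{est_eiggg} (so that $\Lambda$, $u_\lambda$, $e^{\lambda\cdot z}$ are strictly positive and bounded) gives $w(x+z)-w(x)=-\mu\cdot z$ for a.e.\ $x\in\mathbb{T}^d$ and a.e.\ $z\in-\operatorname{supp}(a)$. By \eqref{M2_conseq} one can pick $z_0\in-\operatorname{supp}(a)$ with $\mu\cdot z_0<0$; since $\tilde\pi_\lambda$ is equivalent to Lebesgue measure on $\mathbb{T}^d$ by \eqref{est_eiggg}, the exceptional null sets for the finite sequence of shifts $x\mapsto x+kz_0$, $k=0,\dots,n-1$, remain null under Lebesgue translation, so iterating yields $w(x+nz_0)-w(x)=n|\mu\cdot z_0|\to\infty$ for a.e.\ $x$ as $n\to\infty$, contradicting the boundedness of $w$. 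Hence $\nabla^2\theta(\lambda)$ is positive definite.

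The most delicate step is the sum-of-squares representation: because $\tilde A_\lambda$ is generically non-reversible, the algebraic manipulation is not entirely routine, and the cleanest derivation is probably the probabilistic one via Dynkin's formula, in which both martingales combine naturally to give a single non-negative quadratic variation. Everything else (analyticity, the form of $\tilde A_\lambda$, unique solvability of the Poisson equation, and the rigidity contradiction) is standard once the above building blocks are in place.
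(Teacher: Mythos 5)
Your proof is correct, and its skeleton coincides with the paper's: both differentiate the eigenvalue equation twice, extract the Hessian from the solvability (Fredholm) conditions against $u_\lambda^\star$ as in \eqref{solv2}, and both use the conjugation by the principal eigenfunction — your Doob $h$-transform $\tilde A_\lambda=u_\lambda^{-1}(A_\lambda-\theta)(u_\lambda\,\cdot)$ is exactly the paper's reduction of a general $\lambda_0\in\Gamma$ to the ``centered'' case via $R_{\lambda_0}$. The genuine divergence is in the final positivity step. The paper identifies $\nabla\nabla\theta$ with the symmetric part of the effective diffusion matrix of the homogenization problem for $A_0$ (identifying $\partial_{\lambda_i}u_\lambda|_{\lambda=0}$ with the corrector and $\partial_{\lambda_i}\theta(0)$ with minus the effective drift) and then cites \cite[Proposition 6.1]{PZh2019} for positive definiteness; you instead derive the carr\'e-du-champ / asymptotic-variance representation
$\mu^{\top}\nabla^2\theta(\lambda)\mu=\int\tilde\pi_\lambda(x)\int\tilde k_\lambda(x,y)\bigl[\mu\cdot(y-x)+w(y)-w(x)\bigr]^2dy\,dx$
explicitly and rule out degeneracy by the translation-rigidity argument based on \eqref{M2-bis} and the boundedness of the (periodic) corrector $w$. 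Your algebra is right as written — and, contrary to your closing caveat, it does not actually require reversibility of $\tilde A_\lambda$: the cross term is killed by the invariance of $\tilde\pi_\lambda$ and the Poisson equation alone, so the ``delicate'' step is in fact routine. What your route buys is a self-contained proof (no appeal to the external effective-diffusion result), an explicit justification of differentiability via Kato's perturbation theory for the isolated simple eigenvalue (which the paper takes for granted when it differentiates \eqref{specA}), and a transparent use of the ellipticity hypothesis \eqref{M2-bis}, together with \eqref{est_eiggg} and \eqref{kernel_of_power} for the equivalence of $\tilde\pi_\lambda$ with Lebesgue measure and the spectral gap; what the paper's route buys is brevity and a structural link between $\nabla\nabla\theta(0)$ and the homogenized diffusion matrix of \cite{PZh2019}, which is of independent interest.
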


\begin{proof}
We are going to show that the matrix $\nabla \nabla \theta (\lambda_0)$ coincides with an effective diffusion matrix for a family of  convolution type operators with periodic coefficients.

Let us start with the case $\lambda_0 = 0$. Then $\theta(0)=0$, and the principal eigenfunction $u_0(x) \equiv 1$.
Differentiating equality \eqref{specA} in $\lambda_i$, $i=1,\ldots,d$, yields
\begin{equation}\label{specA-dif1}
\begin{array}{l}
\displaystyle
\int_{\mathbb{R}^d} a(x-y) \Lambda(x,y) (y_i - x_i) e^{\lambda(y-x)}u_\lambda(y) dy \\
\displaystyle
+  \int_{\mathbb{R}^d} a(x-y) \Lambda(x,y) e^{\lambda(y-x)} \partial_{\lambda_i}  u_\lambda(y) dy \\[4mm]
\displaystyle
 - \int_{\mathbb{R}^d}\!\! a(x-y) \Lambda(x,y) dy \, \partial_{\lambda_i} u_\lambda(x)
  = \big(\partial_{\lambda_i} \theta(\lambda) \big) u_\lambda (x) + \theta(\lambda)\big( \partial_{\lambda_i} u_\lambda(x) \big).
\end{array}
\end{equation}
Relation  \eqref{specA-dif1} can be rearranged as follows:
\begin{equation}\label{specA-dif1-bis}
\begin{array}{l}
\displaystyle
\int_{\mathbb{R}^d} a(x-y) \Lambda(x,y) e^{\lambda(y-x)} \partial_{\lambda_i}  u_\lambda(y) dy \\
\displaystyle
 - \int_{\mathbb{R}^d} a(x-y) \Lambda(x,y) dy \, \partial_{\lambda_i} u_\lambda(x) - \theta(\lambda) \, \partial_{\lambda_i} u_\lambda(x)  \\ \displaystyle
 = - \int_{\mathbb{R}^d} a(x-y) \Lambda(x,y) (y_i - x_i) e^{\lambda(y-x)}u_\lambda(y) dy + \big(\partial_{\lambda_i} \theta(\lambda) \big) u_\lambda (x).
\end{array}
\end{equation}
The solvability condition for \eqref{specA-dif1-bis} reads
\begin{equation}\label{solv1}
\begin{array}{l}
\displaystyle
\int_{\mathbb{T}^d} \int_{\mathbb{R}^d} a(x-y) \Lambda(x,y) (y_i - x_i) e^{\lambda(y-x)} u_\lambda(y) u_\lambda^\star (x) dy dx
\\[2mm] = \displaystyle \partial_{\lambda_i} \theta(\lambda)   \int_{\mathbb{T}^d} u_\lambda (x)  u_\lambda^\star (x) dx = \partial_{\lambda_i} \theta(\lambda).
\end{array}
\end{equation}
Differentiating \eqref{specA-dif1} one more time in $\lambda_j$ yields
\begin{equation}\label{specA-dif2}
\begin{array}{l}
\displaystyle
\int_{\mathbb{R}^d} a(x-y) \Lambda(x,y) (y_i - x_i) (y_j - x_j) e^{\lambda(y-x)} u_\lambda(y) dy \\
\displaystyle
+ \int_{\mathbb{R}^d} a(x-y) \Lambda(x,y) (y_i - x_i) e^{\lambda(y-x)}  \partial_{\lambda_j} u_\lambda(y) dy
 \\[3.4mm]  \displaystyle
+ \int_{\mathbb{R}^d} a(x-y) \Lambda(x,y) (y_j - x_j) e^{\lambda(y-x)}  \partial_{\lambda_i} u_\lambda(y) dy
\\[3.4mm]
\displaystyle
+  \int_{\mathbb{R}^d} a(x-y) \Lambda(x,y) e^{\lambda(y-x)} \partial_{\lambda_i}  \partial_{\lambda_j} u_\lambda(y) dy \\[3.4mm]
\displaystyle
 - \int_{\mathbb{R}^d} a(x-y) \Lambda(x,y) dy \, \partial_{\lambda_i}  \partial_{\lambda_j} u_\lambda(x)
  = \partial_{\lambda_i}  \partial_{\lambda_j} \theta(\lambda) \, u_\lambda (x)
 \\[3mm]
 \displaystyle
  +  \partial_{\lambda_i} \theta(\lambda)  \, \partial_{\lambda_j}  u_\lambda (x) + \partial_{\lambda_j} \theta(\lambda)  \, \partial_{\lambda_i}  u_\lambda (x)
+   \theta(\lambda) \,\partial_{\lambda_i}  \partial_{\lambda_j}  u_\lambda(x).
\end{array}
\end{equation}
After rearranging  \eqref{specA-dif2} in the same way as \eqref{specA-dif1-bis}
the  solvability condition for \eqref{specA-dif2} reads
\begin{equation}\label{solv2}
\begin{array}{l}
\displaystyle
\int_{\mathbb{T}^d} \int_{\mathbb{R}^d} a(x-y) \Lambda(x,y) (y_i - x_i) (y_j - x_j) e^{\lambda(y-x)} u_\lambda(y)  u_\lambda^\star (x) dy dx \\[3.3mm]
\displaystyle
+  \int_{\mathbb{T}^d}  \int_{\mathbb{R}^d} a(x-y) \Lambda(x,y) (y_i - x_i) e^{\lambda(y-x)}  \partial_{\lambda_j} u_\lambda(y)  u_\lambda^\star (x)  dy dx \\[3.3mm]
\displaystyle
+  \int_{\mathbb{T}^d}  \int_{\mathbb{R}^d} a(x-y) \Lambda(x,y) (y_j - x_j) e^{\lambda(y-x)}  \partial_{\lambda_i} u_\lambda(y)  u_\lambda^\star (x)  dy dx
 \\ [3mm]
\displaystyle
 - \partial_{\lambda_i} \theta(\lambda)   \int_{\mathbb{T}^d} \partial_{\lambda_j} u_\lambda (x) \,  u_\lambda^\star (x) dx -  \partial_{\lambda_j} \theta(\lambda)   \int_{\mathbb{T}^d} \partial_{\lambda_i} u_\lambda (x) \,  u_\lambda^\star (x) dx
 \\ [3mm]
\displaystyle
 = \partial_{\lambda_i} \partial_{\lambda_j} \theta(\lambda).
\end{array}
\end{equation}
At $\lambda = 0$  relation \eqref{solv2}  takes the form
\begin{equation}\label{solv2at0}
\begin{array}{l}
\displaystyle
\partial_{\lambda_i} \partial_{\lambda_j} \theta(0)
= \int_{\mathbb{T}^d} \int_{\mathbb{R}^d} a(x-y) \Lambda(x,y) (y_i - x_i) (y_j - x_j) u_0^\star (x) dy dx \\
\displaystyle
+ \int_{\mathbb{T}^d}  \int_{\mathbb{R}^d} a(x-y) \Lambda(x,y) (y_i - x_i) \partial_{\lambda_j} u_0(y)  u_0^\star (x)  dy dx
 \\[3.3mm]
\displaystyle
+ \int_{\mathbb{T}^d}  \int_{\mathbb{R}^d} a(x-y) \Lambda(x,y) (y_j - x_j) \partial_{\lambda_i} u_0(y)  u_0^\star (x)  dy dx\\ [3mm]
\displaystyle
 - \partial_{\lambda_i} \theta(0)   \int_{\mathbb{T}^d} \partial_{\lambda_j} u_0 (x) \,  u_0^\star (x) dx - \partial_{\lambda_j} \theta(0)   \int_{\mathbb{T}^d} \partial_{\lambda_i} u_0 (x) \,  u_0^\star (x) dx.
\end{array}
\end{equation}

\begin{lemma}
The matrix $\nabla\nabla \theta(0)$ 
is positive definite.
\end{lemma}

\begin{proof}
Notice that the matrix defined on the right-hand side of \eqref{solv2at0}
coincides with the symmetric part of the effective diffusion matrix
\begin{equation}\label{Meff}
\begin{array}{l}
\displaystyle
\Theta^{ij} = \frac12 \int_{\mathbb{T}^d} \int_{\mathbb{R}^d} a(x-y) \Lambda(x,y) (y_i - x_i) (y_j - x_j) u_0^\star (x) dy dx \\
\displaystyle
- \int_{\mathbb{T}^d}  \int_{\mathbb{R}^d} a(x-y) \Lambda(x,y) (x_i - y_i) \varkappa_j (y)  u_0^\star (x)  dy dx
 + b_i   \int_{\mathbb{T}^d} \varkappa_j (x) \,  u_0^\star (x) dx,
\end{array}
\end{equation}
that was constructed in \cite{PZh2019} for 
the convolution type operator $A_0$.

Indeed, at $\lambda = 0$ relation \eqref{solv1} takes the form
\begin{equation}\label{solv1-at0}
\partial_{\lambda_i} \theta(0) = \int_{\mathbb{T}^d} \int_{\mathbb{R}^d} a(x-y) \Lambda(x,y) (y_i - x_i) u_0^\star (x) dy dx,
\end{equation}
where $u_0^\star$ is the eigenfunction of the adjoint operator $A_0^\star$ corresponding to the principal eigenvalue $\theta(0) = 0$.
Observe that the expression on the right-hand side of \eqref{solv1-at0} taken with the negative sign,  coincides with that for the $i$-th coordinate of the effective drift $b_i$ of the operator $A_0$, see \cite{PZh2019}. That is
\begin{equation}\label{b}
\partial_{\lambda_i} \theta(0) = - b_i.
\end{equation}
Letting $\lambda=0$ in \eqref{specA-dif1-bis}, substituting \eqref{b} into \eqref{specA-dif1-bis}, considering the relation $u_0(x) \equiv 1$ and recalling the equation for the corrector $\varkappa$, see \cite{PZh2019},  we conclude that
\begin{equation}\label{varkappa1}
\partial_{\lambda_i} u_\lambda (x) \big|_{\lambda = 0} =  \varkappa_i (x).
\end{equation}
Finally, by \eqref{b} and \eqref{varkappa1} we obtain $\partial_{\lambda_i} \partial_{\lambda_j} \theta(0)=
\Theta^{ij} +\Theta^{ji} $. Then positive definiteness of the matrix $\nabla \nabla \theta(0)$ follows from  \cite [Proposition 6.1]{PZh2019}.
\end{proof}
\medskip

We turn to the case  $\lambda=\lambda_0 + r$ with $\lambda_0 \neq 0, \ \lambda_0 \in \Gamma$, and $r$
belonging to a small neighbourhood of the origin. Then
\begin{equation}\label{A_lr}
A_\lambda u (x)\! =\!\! \int\limits_{\mathbb{R}^d}\!\!\! a(x\!-\!y) \Lambda(x,y) e^{\lambda_0 (y-x)} e^{r (y-x)} u(y) dy -\!\! \int\limits_{\mathbb{R}^d}\!\!\! a(x\!-\!y) \Lambda(x,y) dy \, u(x).
\end{equation}
Let us consider the operator
$
\tilde A_\lambda = R^{-1}_{\lambda_0} A_\lambda R_{\lambda_0},
$
where $R_{\lambda_0} f(x) = u_{\theta(\lambda_0)}(x) f(x)$  is the operator of multiplication by the principal eigenfunction $u_{\theta(\lambda_0)}$ of the operator $A_{\lambda_0}$.\\
The operators $A_\lambda$ and $\tilde A_\lambda$ are similar, thus they have the same spectrum. In particular, the spectral problem for $\tilde A_\lambda$ reads
\begin{equation}\label{specA-r}
\begin{array}{l}
\displaystyle
\int_{\mathbb{R}^d} a(x-y) \Lambda(x,y) u^{-1}_{\theta(\lambda_0)}(x) u_{\theta(\lambda_0)}(y) e^{\lambda_0 (y-x)} e^{r (y-x)} v(y) dy
\\ \displaystyle
- \int_{\mathbb{R}^d} a(x-y) \Lambda(x,y) dy \, v(x) = \theta(\lambda) v(x),
\end{array}
\end{equation}
where $\theta(\lambda)$ is the principal eigenvalue of $A_\lambda$.
Denote
\begin{equation}\label{15A}
\theta_{\lambda_0}(r)=\theta(\lambda)-\theta(\lambda_0) \quad\hbox{with }r= \lambda - \lambda_0.
\end{equation}
For $\lambda = \lambda_0$ we have from \eqref{A_lr}:
\begin{equation}\label{specA-r-at0}
\begin{array}{l}
\displaystyle
A_{\lambda_0} u_{\theta(\lambda_0)} (x) = \int_{\mathbb{R}^d} a(x-y) \Lambda(x,y) e^{\lambda_0 (y-x)} u_{\theta(\lambda_0)}(y) dy  \\ \displaystyle
- \int_{\mathbb{R}^d} a(x-y) \Lambda(x,y) dy \, u_{\theta(\lambda_0)}(x) = \theta(\lambda_0)  \, u_{\theta(\lambda_0)}(x).
\end{array}
\end{equation}
Dividing this equation by  $ u_{\theta(\lambda_0)}(x)$ we get
\begin{equation}\label{specA-r-bis}
\begin{array}{l}
\displaystyle
\int_{\mathbb{R}^d} a(x-y) \Lambda(x,y) u^{-1}_{\theta(\lambda_0)}(x) u_{\theta(\lambda_0)}(y) e^{\lambda_0 (y-x)} dy  \\ \displaystyle
= \int_{\mathbb{R}^d} a(x-y) \Lambda(x,y) dy + \theta(\lambda_0).
\end{array}
\end{equation}
Thus  \eqref{specA-r}, \eqref{15A} and \eqref{specA-r-bis} imply
\begin{equation}\label{specA-r-bisbis}
\begin{array}{l}
\displaystyle
\int_{\mathbb{R}^d} a(x-y) \Lambda(x,y) u^{-1}_{\theta(\lambda_0)}(x) u_{\theta(\lambda_0)}(y) e^{\lambda_0 (y-x)} e^{r (y-x)} v(y) dy  \\ \displaystyle
= \Big[ \int_{\mathbb{R}^d} a(x-y) \Lambda(x,y) dy + \theta(\lambda_0) \Big] v(x) +  \theta_{\lambda_0}(r) v(x)
 \\ \displaystyle
 =\int_{\mathbb{R}^d} a(x-y) \Lambda(x,y) u^{-1}_{\theta(\lambda_0)}(x) u_{\theta(\lambda_0)}(y) e^{\lambda_0 (y-x)} dy \, v(x)  +  \theta_{\lambda_0}(r) v(x).
\end{array}
\end{equation}
This spectral problem is similar to that in  \eqref{A_l}, if we replace the kernel $a(x-y)\Lambda(x,y)$
with the kernel
$$
a^{(\lambda_0)}(x-y) \Lambda^{(\lambda_0)}(x,y) = a(x-y) e^{\lambda_0 (y-x)} \, \Lambda(x,y) u^{-1}_{\theta(\lambda_0)}(x) u_{\theta(\lambda_0)}(y).
$$
According to \eqref{15A},
$$
\frac{\partial^2 \theta (\lambda_0) }{\partial \lambda_i \partial \lambda_j} = \frac{\partial^2 \theta_{\lambda_0}(0)}{\partial r_i \partial r_j},
$$
and the desired positive definiteness follows.
\end{proof}
\medskip

\begin{remark}
The structure of the set $\Gamma = \{ \lambda \in \mathbb{R}^d: \ \theta(\lambda) > -g_{\rm min} \}$ depends on the kernel $ a(x-y) \Lambda(x,y)$ of the operator $A_0$. For example, if $a(-z) = a(z)$ and $\Lambda(x,y)$ is a symmetric periodic function, then $\theta(-\lambda) = \theta(\lambda)$ and $\theta(0) = 0$ is the minimum of $\theta(\lambda)$ (as a function of $\lambda$). Consequently, in this case $\Gamma = \mathbb{R}^d$ and $\theta(\lambda)\ge 0$ for all $\lambda$.

Also, $\Gamma=\mathbb R^d$ if $\Lambda=\Lambda(x-y)$. In this case the spectrum of $A_\lambda$ is
discrete for any $\lambda\in\mathbb R^d$.
\end{remark}

The following example illustrates that in general the set $\Gamma$ need not coincide with $\mathbb{R}^d$.

\medskip
\noindent
{\bf Example.} 
Take $a(z) = {\bf 1}_{[-\frac12,\frac12]^d}$ equal to the characteristic function of the period, and $\Lambda(x,y) = b(x) \Lambda_0 (x-y)$. We assume that $\Lambda_0(z)$ is a smooth periodic function, $0<\alpha_1 \le \Lambda_0(z) \le \alpha_2 < \infty$, and $\Lambda_0$ has the form of a single peak:
$$
\Lambda_0 (z) = \left\{
\begin{array}{l}
\alpha_2, \quad |z-z_0| < \frac{c}{2} \\
\alpha_1, \quad |z - z_0|> c
\end{array}
\right.
$$
Here $z_0 \neq 0, z \in \mathbb{T}^d$, and we choose sufficiently small constants $\alpha_1$ and $c$ and sufficiently large constant $\alpha_2$ so that the following normalization condition holds:
$$
\int_{\mathbb{R}^d} a(z) \Lambda_0(z) dz = \int_{\mathbb{T}^d} a(z) \Lambda_0(z) dz = 1.
$$

Then the spectral problem \eqref{specA} for $A_\lambda$ reads
$$
\begin{array}{l}
\displaystyle
b(x) \,\int_{\mathbb{T}^d} a(x-y) \Lambda_0 (x- y) e^{\lambda(y-x)} u_\lambda(y) dy  \\ \displaystyle
= b(x) \, \int_{\mathbb{T}^d} a(x-y) \Lambda_0(x-y) dy \, u_\lambda(x)
+ \theta(\lambda) u_\lambda (x),
\end{array}
$$
and, after straightforward rearrangements,
\begin{equation}\label{ex-1}
\frac{b(x)}{b(x)+ \theta(\lambda)} \int_{\mathbb{T}^d} a(x-y) \Lambda_0 (x- y) e^{\lambda(y-x)} u_\lambda(y) dy = u_\lambda (x),
\end{equation}
where $u_\lambda > 0$ is the principal eigenfunction.

We now take a periodic positive function $b(x), \; 0< b_{\rm min} \le b(x) \le 1$, such that
\begin{equation}\label{ex-2}
\big\| \frac{b(x)}{b(x) -  b_{\rm min}} \big\|_{L^2(\mathbb{T}^d)} < 1+\delta \quad \mbox{with } \; 0<\delta<1.
\end{equation}
Obviously, inequality \eqref{ex-2}  remains valid for  $ \frac{b(x)}{b(x) + \theta(\lambda)}$ with any $\theta(\lambda)>  -  b_{\rm min}$.
Then the operator on the left hand side of equation \eqref{ex-1} is positive and compact in $L^2(\mathbb{T}^d)$.

Assuming that $\alpha_1$ is small enough we conclude that there exists $\lambda_0$ such that $\lambda_0 z_0>0$ and
\begin{equation}\label{ex-3}
0< a(z) \Lambda_0(z) e^{-\lambda_0 z}<{\textstyle \frac12 }\quad \mbox{for all } \; z \in \mathbb{T}^d.
\end{equation}
Then from \eqref{ex-2}, \eqref{ex-3} it follows that the $L^2(\mathbb T^d)$ norm of the left-hand side in \eqref{ex-1} is strictly less than $\| u_{\lambda_0}\|_{L^2(\mathbb{T}^d)}$.  Therefore, equation  \eqref{ex-1} has no positive solution $u(x) \in L^2(\mathbb{T}^d)$, and there are no points of the discrete spectrum of $A_{\lambda_0}$ located above the continuous spectrum, that is
$$
\sigma_{\rm disc}(A_{\lambda_0})\cap ( -g_{\rm min},+\infty)=\emptyset.
$$
Observe that in this example equation \eqref{ex-1} has no positive solutions for all $\lambda$ situated
in a sufficiently small neighbourhood of $\lambda_0$, thus $\lambda_0$ is an interior point of
$\mathbb R^d\setminus\Gamma$.

\subsection{Properties of the Hamiltonian}\label{ss_ha_and la}
Denote
\begin{equation}\label{conseq}
H(\lambda) := \mathtt{s}(A_\lambda) = \left\{
\begin{array}{l}
\theta(\lambda), \; \lambda \in \Gamma \\
- g_{\min}, \ \mbox{otherwise}
\end{array}
\right.
\end{equation}
As a consequence of  Theorem \ref{SP} we have
\begin{proposition}
The function $H(\cdot)$ is convex. It is strictly convex on the set $\Gamma$.
Moreover,
\begin{equation}\label{htoinfty}
\frac{H(\lambda)}{|\lambda|} \to +\infty \quad \mbox{as } \; |\lambda| \to + \infty.
\end{equation}
\end{proposition}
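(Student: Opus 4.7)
The proposition splits into three essentially independent claims, which I would handle in the following order.

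\textbf{Global convexity.} The cleanest route avoids any case analysis on $\Gamma$ versus $\mathbb R^d\setminus\Gamma$ and uses the probabilistic representation from \eqref{St-3} combined with Lemma~\ref{l_growth_a_lambda}, giving
$$
t\, H(\lambda)\;=\;\lim_{\eps\to 0}\eps\ln\mathbb{E}\,e^{\frac{\lambda}{\eps}\xi_0^\eps(t)}
$$
for every fixed $t>0$ and every $\lambda\in\mathbb R^d$. Under the super-exponential bound \eqref{lt} all exponential moments of $\xi_0^\eps(t)$ are finite (this is already shown in the proof of \eqref{exp}), so for each fixed $\eps>0$ the map $\lambda\mapsto\ln\mathbb{E}\,e^{(\lambda/\eps)\xi_0^\eps(t)}$ is a cumulant generating function of a random vector and hence is convex in $\lambda$ by H\"older's inequality. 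Multiplication by the positive constant $\eps$ preserves convexity, the pointwise limit of convex functions is convex, and division by $t>0$ preserves convexity, so $H$ is convex on all of $\mathbb R^d$.

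\textbf{Strict convexity on $\Gamma$.} By the definition \eqref{conseq}, $H$ coincides with $\theta$ on $\Gamma$, and Theorem~\ref{SP} states that the Hessian $\nabla\nabla\theta(\lambda)$ is positive definite at every $\lambda\in\Gamma$. Combined with $C^2$-regularity of $\theta$ on $\Gamma$ (which follows from the simplicity of the principal eigenvalue established in Subsection~\ref{ss_prop_alambda} and analytic perturbation theory applied to the analytic family $\lambda\mapsto A_\lambda$), this gives strict convexity of the restriction of $H$ to any segment contained in $\Gamma$. If the open segment joining two points of $\Gamma$ happens to leave $\Gamma$, the strict inequality still follows from the already-established global convexity together with the fact that $H(x)>-g_{\min}=H(z)$ for every $x\in\Gamma$ and $z\in\mathbb R^d\setminus\Gamma$.

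\textbf{Superlinear growth.} This is immediate from Lemma~\ref{l_exp_growth}: for $|\lambda|\ge R_0$ one has $H(\lambda)=\theta(\lambda)\ge c_a e^{c_e|\lambda|}-C_s$, hence $H(\lambda)/|\lambda|\to+\infty$ as $|\lambda|\to\infty$.

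The only nontrivial step is the first one. A more direct route would need to argue that $\mathbb R^d\setminus\Gamma$ is the argmin of $H$, hence convex, and that $\theta$ extends continuously by $-g_{\min}$ to $\partial\Gamma$; this is feasible but a bit delicate, since one must rule out pathological behaviour of eigenvalues approaching the essential spectrum. The cumulant generating function argument sidesteps these issues by invoking convexity at the $\eps$-level, before any spectral information enters.
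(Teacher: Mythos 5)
Your proposal is correct, and for two of the three claims it coincides with the paper's proof: strict convexity on $\Gamma$ is taken from Theorem \ref{SP} and the superlinear growth from Lemma \ref{l_exp_growth}, exactly as the authors do. Where you genuinely diverge is on the \emph{global} convexity of $H$. The paper simply asserts that "the convexity \dots\ [has] been proved in Theorem \ref{SP}", but that theorem only establishes positive definiteness of $\nabla\nabla\theta$ at points of $\Gamma$; it says nothing about $\mathbb R^d\setminus\Gamma$, where $H\equiv -g_{\rm min}$, nor about what happens across $\partial\Gamma$ (and $\Gamma$ itself is not convex, being the complement of the bounded convex set $\Upsilon$). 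Your route via the representation $tH(\lambda)=\lim_{\eps\to0}\eps\ln\mathbb E\,e^{(\lambda/\eps)\xi_0^\eps(t)}$ from \eqref{St-3} and Lemma \ref{l_growth_a_lambda} — convexity of each cumulant generating function by H\"older, preserved under pointwise limits — closes this gap cleanly and is the standard argument in this setting; it buys global convexity without any delicate analysis of eigenvalues approaching the essential spectrum. Your handling of chords of $\Gamma$ that cross $\Upsilon$ is also sound: if $H$ were affine on such a chord it would be constant equal to its global minimum $-g_{\rm min}$ there, contradicting $H>-g_{\rm min}$ at the endpoints. In short, the proposal is not only correct but supplies an argument the paper elides.
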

\begin{proof}
The convexity and the strict convexity on $\Gamma$ have been proved in Theorem \ref{SP}.
The  relation in  \eqref{htoinfty} follows from Lemma \ref{l_exp_growth}.
\end{proof}
By Lemma \ref{l_growth_a_lambda} we have
\begin{equation}\label{H}
\lim_{\varepsilon \to 0} \varepsilon \, \ln \mathbb{E} e^{\frac{\lambda}{\varepsilon} \xi_0^\varepsilon(t)}
= t \, H(\lambda),
\end{equation}
with $H(\lambda)$ defined in \eqref{conseq}. 

\smallskip\noindent
Concluding this subsection we summarize the properties of the function 
$H(\lambda)$:\\
1) $H(\lambda)$ is convex, it is strictly convex for $\lambda \in \Gamma$, \\
2) $H(0) = 0$ and  $H(\lambda)$ is strictly convex at $\lambda = 0$, \\
3) $\frac{H(\lambda)}{|\lambda|} \to +\infty$ as $|\lambda| \to + \infty$, \\
4) the function $H(\lambda)$ equals to a constant on the set $\lambda \in \Upsilon = \mathbb{R}^d \setminus \Gamma$:
$$
H(\lambda) = -   g_{\min}, \quad \lambda \in \Upsilon = \mathbb{R}^d \setminus \Gamma,
$$
the set $\Upsilon \subset \mathbb{R}^d$ is bounded and convex.   If the interior of $\Upsilon$ is not empty,
then the boundary $\partial \Upsilon$ is Lipschitz continuous.

\subsection{The Legendre transform of $H(\lambda)$ and the G\"{a}rtner-Ellis theorem.}

Let $L$ and $L_t$ be the Legendre transform of $H(\cdot)$ and $H_t:=tH$, respectively,  i.e.
\begin{equation}\label{def_ham_lag}
L(\zeta) = \sup_\lambda \big( \lambda \zeta - H(\lambda) \big),
\qquad
L_t(\zeta) = \sup_\lambda \big( \lambda \zeta - tH(\lambda) \big)=t L\big(\frac \zeta t\big),\quad \zeta \in \mathbb{R}^d.
\end{equation}
We recall (see, for instance, \cite{DeZe}) that  $\zeta'\in \mathbb R^d$ is an exposed point of $L$ if for some $\theta\in\mathbb R^d$ and
all $\zeta\not=\zeta'$,
$$
\theta\cdot \zeta-L(\zeta)>\theta\cdot \zeta'-L(\zeta').
$$
The properties of $H(\lambda)$ imply the following properties of $L(\zeta)$: \\
1) $L(\zeta)$ is a convex function, $L(\zeta)<+\infty$ for any $\zeta\in\mathbb R^d$. It is strictly convex
in the neighbourhood of infinity, that is there exists $R_0$ such that $L(\zeta)$ is strictly convex
for all $\zeta$ such that $|\zeta|\geq R_0$, \\
2) $L(\zeta)$ is non-negative: $L(\zeta) \ge 0$,\\
3) $\min L(\zeta) = L(\zeta^\ast) = 0$ and $L$ is strictly convex at $\zeta^\ast$, \\
4) $\frac{L(\zeta)}{|\zeta|} \to +\infty$ as $|\zeta| \to + \infty$, in particular, $L(\zeta)$ has compact sub-level sets, \\
5) 
The complement to the set of exposed points of $L$, if not empty, consists of segments of bounded length
with one end at  $0$, the restriction of $L$ on each such segment is a linear function.   

\medskip\noindent
Denote the set of exposed points of $L$ by $\Omega$.
It should be emphasized that the origin need not be an exposed point of $L(\cdot)$. In particular,  the restriction 
of $L$ on two segments going from the origin in the opposite directions can form the same linear function.
However, if $\mathbb R^d\setminus\Gamma$ has a non-trivial interior, then $0\in \Omega$.
 This can be justified by the  convex analysis arguments if we take into account the properties
of $H(\cdot)$.


\begin{theorem}\label{l_findim_per}
For any $t>0$ and any $x^0\in\mathbb R^d$ the random vector $\xi_{x^0}^\eps(t)-x_0$ satisfies the large deviation principle
with the rate function   $L_t(x)=tL\big(\frac{x}t\big)$.
\end{theorem}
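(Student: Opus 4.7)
My plan is to apply the Gärtner-Ellis theorem to the family $X^\eps := \xi_{x^0}^\eps(t) - x^0$, using the properties of $H$ and $L$ assembled in Subsections~\ref{ss_prop_alambda}--\ref{ss_ha_and la}. The first step is to identify the pointwise limit of the logarithmic moment generating function
\[
\Lambda^\eps(\lambda) := \eps \log \mathbb{E}\, e^{\lambda X^\eps/\eps}.
\]
The scaling $\xi_{x^0}^\eps(t) = \eps\, \eta_{x^0/\eps}(t/\eps)$, where $\eta$ is the process generated by $A_0$ in \eqref{A_0}, combined with identity \eqref{exp} applied at $(x,t)=(x^0/\eps,t/\eps)$, gives
\[
\Lambda^\eps(\lambda) = \eps\, \log\bigl[ e^{(t/\eps) A_\lambda} 1 \bigr](x^0/\eps),
\]
the factor $e^{\lambda x^0/\eps}$ in \eqref{exp} being exactly cancelled by the subtraction of $x^0$. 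For $\lambda \in \Gamma$, the simplicity of $\theta(\lambda)$ together with the uniform bounds \eqref{est_eiggg} on the eigenfunctions $u_\lambda, u_\lambda^\star$ yields the spectral asymptotic
\[
\bigl[ e^{s A_\lambda} 1 \bigr](x) = \Bigl(\textstyle\int_{\mathbb T^d} u_\lambda^\star(y)\, dy\Bigr)\, u_\lambda(x)\, e^{s \theta(\lambda)}\bigl(1 + o(1)\bigr), \qquad s \to \infty,
\]
uniformly in $x \in \mathbb T^d$; hence $\Lambda^\eps(\lambda) \to t\theta(\lambda) = tH(\lambda)$ independently of $x^0$. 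For $\lambda \in \mathbb R^d \setminus \Gamma$ the sandwich estimates in the proof of Lemma~\ref{l_growth_a_lambda} (the semigroup norm upper bound and the pointwise comparison $e^{sA_\lambda}1 \geq e^{s\mathcal G}1$) are uniform in the evaluation point, again giving $\Lambda^\eps(\lambda) \to -tg_{\min} = tH(\lambda)$.

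By the properties of $H$ collected in Subsection~\ref{ss_ha_and la}, $tH$ is finite, convex, continuous and super-linear on $\mathbb R^d$, and $L_t(\zeta) = tL(\zeta/t)$ has compact sublevel sets, so $\{X^\eps\}$ is exponentially tight. The Gärtner-Ellis theorem therefore delivers the upper bound
\[
\limsup_{\eps \to 0} \eps \log \mathbb P(X^\eps \in C) \leq -\inf_{\zeta \in C} L_t(\zeta)
\]
for every closed $C \subset \mathbb R^d$, and the partial lower bound
\[
\liminf_{\eps \to 0} \eps \log \mathbb P(X^\eps \in G) \geq -\inf_{\zeta \in G \cap \Omega} L_t(\zeta)
\]
for every open $G$, where $\Omega$ is the set of exposed points of $L$.

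The main obstacle is the potential non-strict convexity of $H$ outside $\Gamma$, which produces non-exposed points of $L$ and thereby restricts the reach of the Gärtner-Ellis lower bound. To upgrade it to $-\inf_G L_t$ I would use property 5) of Subsection~\ref{ss_ha_and la}: the non-exposed set $\Omega^c$ is contained in finitely many segments from the origin, with $L$ affine on each. In dimension $d \geq 2$ this set has empty interior, so by continuity of $L$ any $\zeta \in G$ can be approximated by $\zeta' \in G \cap \Omega$ with $L(\zeta') \to L(\zeta)$, which yields $\inf_{G \cap \Omega} L_t = \inf_G L_t$. When $G$ is confined to a linear segment of $L$ (a possibility in $d = 1$), an exponential tilt by the common slope $\lambda^\star$ on that segment produces a new probability under which the rate function vanishes identically on the segment; the Radon-Nikodym identity then produces $\eps \log \mathbb P(X^\eps \in G) \geq -\inf_G L_t + o(1)$. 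Taken together these two cases complete the LDP.
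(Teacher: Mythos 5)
Your identification of the limiting logarithmic moment generating function and the resulting G\"artner--Ellis upper bound follow the paper's route (formula \eqref{H} plus Lemma \ref{l_growth_a_lambda}), and your attention to uniformity in the evaluation point $x^0/\eps$ is a reasonable addition. The gap is in the lower bound at non-exposed points, which is exactly the delicate part of this theorem. First, your claim that in $d\ge 2$ the non-exposed set has empty interior is unjustified: property 5) of Subsection 4.5 says the complement of $\Omega$ is a union of segments emanating from the origin, but nothing bounds the number of such segments. They are indexed (roughly) by the points of $\partial\Gamma$ where $H$ has a crease, i.e.\ by a $(d-1)$-dimensional family, so their union is generically a full-dimensional star-shaped region around the origin; an open set $G$ contained in its interior would have $G\cap\Omega=\emptyset$ and your approximation $\inf_{G\cap\Omega}L_t=\inf_G L_t$ breaks down. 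Second, the exponential-tilt fallback does not close this case: tilting by the common slope $\lambda^\star$ of a flat segment produces a tilted family whose rate function vanishes on the \emph{whole} segment, so the tilted measure may concentrate at the segment's endpoints and assign exponentially small (at a positive rate) probability to a ball around an interior point $\zeta$ of the segment. The Radon--Nikodym identity then yields nothing better than the trivial bound; this is precisely the classical failure mode of G\"artner--Ellis at non-exposed points, not a technicality one can wave through.

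The paper closes this gap with a genuinely probabilistic argument that your proposal is missing. For a non-exposed point $x=r\phi$ lying on the segment $[0,R\phi]$ where $L_t$ is linear, one uses the Markov property to split $[0,t]$ into $[0,\kappa t]$ and $[\kappa t,t]$ with $\kappa$ chosen so that $\frac{r}{1-\kappa}=R+h_0$: on the first leg the process stays near $0$ (an exposed point when $\mathbb R^d\setminus\Gamma$ has non-trivial interior, and otherwise handled by a separate segment argument), and on the second leg it travels to $r\phi$ at the velocity $(R+h_0)\phi$, which \emph{is} exposed. Both legs are then controlled by the already-established lower bound at exposed points, and the linearity of $L_t$ on $[0,R\phi]$ makes the sum of the two costs equal to $L_t(r\phi)$ up to $O(h_0)+o(1)$. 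You would need to add this (or an equivalent) argument to make the lower bound complete.
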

\begin{proof}
As an immediate  consequence of formula  \eqref{H} we obtain
\begin{equation}\label{H_bibis}
\lim_{\varepsilon \to 0} \varepsilon \, \ln \mathbb{E}
e^{\frac{\lambda}{\varepsilon} (\xi_{x^0}^\varepsilon(t)-x^0)} = t \, H(\lambda),
\end{equation}
Then the upper large deviation bound follows from the G\"artner-Ellis theorem.  We have
$$
\lim\limits_{\delta\to0}\lim\limits_{\eps\to0}\eps\log\big[ \mathbb P\big\{|(\xi_{x^0}^\varepsilon(t)-x^0)-x|\leq\delta\big\}\big]
\leq -L_t (x).
$$

The lower bound is slightly more tricky. By the  G\"artner-Ellis theorem for any $t>0$ and any $x\in\mathbb R^d$
such that $\frac{x} t$ is an exposed point of $L(\cdot)$ the inequality
$$
\lim\limits_{\delta\to0}\lim\limits_{\eps\to0}\eps\log\big[ \mathbb P\big\{|(\xi_{x^0}^\varepsilon(t)-x^0)-x|\leq\delta\big\}\big]
\geq -L_t (x)
$$
holds.  Without loss of generality we assume that $x^0=0$.
We first assume that 
$0\in\Omega$.
Consider $x\in\mathbb R^d$ which is a non exposed point of $L_t(\cdot)$ and
represent it as $x=r\phi$ with $\phi\in S^{d-1}$ and $r>0$.
{
Since $\xi(\cdot)$ is a Markov process, for any $\kappa\in (0,1)$ and for any $\delta>0$ we have
$$
\mathbb P\big\{|\xi_0^\varepsilon(t)-x|\leq2\delta\big\}=
\mathbb P\big\{|\xi_0^\varepsilon(t)-r\phi|\leq2\delta\big\}
$$
\begin{equation}\label{mark_expl_1}
\geq
{\textstyle \mathbb P\big\{\{|\xi_0^\varepsilon(\kappa t)|\leq\delta\}\cap
\{|\xi_0^\varepsilon(t)-\xi_0^\varepsilon(\kappa t)-r\phi|\leq \delta\}\big\}
}
\end{equation}
$$
\geq  \mathbb P\big\{\{|\xi_0^\varepsilon(\kappa t)|\leq\delta\}
\min\limits_{|y|\leq\delta}\mathbb P\big\{|\xi_y^\varepsilon((1-\kappa)t)-y-r\phi|\leq2\delta\big\}
$$
Denote by $R$ the length of the segment
$(0,R\phi)=(\mathbb R^d\setminus\Omega)\cap \{(0,s\phi)\,:\,s>0\}$.  Then, for any $h_0>0$, the point
$(R+h_0)\phi$ is exposed for $L_t$.
Therefore,  choosing  $\kappa$ in \eqref{mark_expl_1} so that
$\frac r{1-\kappa}=R+h_0$, that is
 $\kappa=\frac{R+h_0- r}{R+h_0}$, and applying
the G\"artner-Ellis theorem, we arrive for all sufficiently small  $\delta>0$ and $h_0>0$ at the following lower bound:
$$
\mathbb P\big\{|\xi_0^\varepsilon(t)-x|\leq2\delta\big\}
\geq \exp\big[-{\textstyle \big(\frac{R+h_0- r}{R+h_0} L_t(0)-\psi(\delta)\big)(1+o(1))}\big]\ \times
$$
$$
\exp\big[ -{\textstyle \big(\frac{ r}{R+h_0} L_t((R+h_0)\phi)-\psi(\delta)\big)(1+o(1))}\big]
$$
$$
\geq \exp\big[-{\textstyle \big(\frac{R- r}{R} L_t(0)+
\frac{ r}{R} L_t(R\phi)-C_Lh_0-2\psi(\delta)\big)(1+o(1))\big]
}
$$
$$
= \exp\big[-{\textstyle \big( L_t(r\phi)-C_Lh_0-2\psi(\delta)\big)(1+o(1))\big]},
$$
where $o(1)$ tends to zero as $\eps\to0$, $\psi(\delta)\to0$ as $\delta\to 0$,
and $C_L$ is a constant which only depends on $L(\cdot)$; we have used here  the fact that $L_t(\cdot)$ is linear on the segment $[0,R\phi]$.
This implies the desired lower bound.
}

 If $0$ is not an exposed point then
 there is a segment that passes through
$0$, such that $L_t$ is linear on this segment, and there are exposed points of $L_t$ in the intersections of any
neighbourhoods of the end points of this segment with the straight line that contains the segment.
In this case in the same way as above one can show that
 $$
\lim\limits_{\delta\to0} \lim\limits_{\eps\to0}\eps\log\big[{\textstyle \mathbb P\big\{\{|\xi_0^\varepsilon(t)|\leq\delta\}}\big]\geq
-L_t(0) .
 $$
It remains to use one more time the same arguments as in the previous case to obtain the required lower bound
for any $x\in\mathbb R^d$.
 This completes the proof of Theorem.
\end{proof}

\subsection{Large deviation principle in the paths space}\label{ss_ldp_pureper}

The goal of this section is to show that the process $\xi_x^\eps(\cdot)$  satisfies on any time interval $[0,T]$
the large deviation principle in the paths space $\mathbf{D}([0,T];\mathbb R^d)$ with the rate function
defined by
\begin{equation}\label{rate_f_per}
I(\gamma(\cdot))=\left\{\begin{array}{ll}\displaystyle
\int_0^T L(\dot\gamma(t))\,dt,& \hbox{\rm if }\gamma\ \hbox{\rm is absolutely continuous and }\gamma(0)=x,\\[3mm]
+\infty,& \hbox{\rm otherwise},
\end{array}
\right.
\end{equation}
where $L(\cdot)$ is introduced in \eqref{def_ham_lag}. 
An important property of $I(\cdot)$ is the compactness of its sublevel sets in the topology of uniform convergence
in $C([0,T];\mathbb R^d)$.
\begin{lemma}\label{l_compa_in_c}
The set $\{\gamma\in C([0,T];\mathbb R^d)\,:\,I(\gamma)\leq s, \gamma(0)=x\}$ is compact in $C([0,T];\mathbb R^d)$
for any $s\in\mathbb R$ and any $x\in\mathbb R^d$.
\end{lemma}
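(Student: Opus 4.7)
The plan is to prove this in the classical two-step way for action functionals: first show that any sequence in the sublevel set has a uniformly convergent subsequence (via Arzel\`a--Ascoli), then show that the limit still belongs to the sublevel set (via lower semicontinuity of $I$).

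For the Arzel\`a--Ascoli step, the crucial input is the superlinear growth of $L$ (property 4 listed after Theorem \ref{l_findim_per}). Concretely, for every $M>0$ there exists $R_M>0$ such that $L(\zeta)\geq M|\zeta|$ whenever $|\zeta|\geq R_M$, so $|\zeta|\leq L(\zeta)/M+R_M$ for all $\zeta$. For $\gamma$ with $I(\gamma)\leq s$ and $\gamma(0)=x$, and any $0\leq t_1<t_2\leq T$,
\[
|\gamma(t_2)-\gamma(t_1)|\leq \int_{t_1}^{t_2}|\dot\gamma(t)|\,dt\leq \frac{1}{M}\int_{t_1}^{t_2}L(\dot\gamma(t))\,dt+R_M(t_2-t_1)\leq \frac{s}{M}+R_M(t_2-t_1).
\]
Given $\varepsilon>0$, first choose $M$ so that $s/M<\varepsilon/2$ and then $\delta=\varepsilon/(2R_M)$; this gives a modulus of continuity uniform over the whole sublevel set, hence equicontinuity. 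Pointwise boundedness follows by taking $t_1=0$. Thus the sublevel set is relatively compact in $C([0,T];\mathbb{R}^d)$.

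For closedness, I would show that $I$ is lower semicontinuous on $C([0,T];\mathbb{R}^d)$. Given $\gamma_n\to\gamma$ uniformly with $I(\gamma_n)\leq s$, the estimate above shows that the $\gamma_n$ are equi-absolutely continuous, so the $\dot\gamma_n$ form an equi-integrable family in $L^1([0,T];\mathbb{R}^d)$ (by the de la Vall\'ee--Poussin criterion, precisely because of the superlinear bound on $L$). Passing to a subsequence, $\dot\gamma_n\rightharpoonup\dot\gamma$ weakly in $L^1$; in particular $\gamma$ is absolutely continuous and $\gamma(0)=x$. Finally, convexity and lower semicontinuity of $L$ (which hold because $L$ is a Legendre transform of a finite convex function $H$) allow the application of the standard Ioffe/Tonelli lower semicontinuity theorem for integral functionals with convex integrands under weak $L^1$ convergence, yielding $I(\gamma)\leq\liminf_n I(\gamma_n)\leq s$.

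The main potential obstacle is the lower semicontinuity step: if one wanted to avoid quoting the general Ioffe theorem, the cleanest self-contained replacement is to use the dual representation $L(\zeta)=\sup_{\lambda}(\lambda\cdot\zeta-H(\lambda))$ and bound $I(\gamma)$ from below by finite suprema of linear functionals of $\dot\gamma$ (which are continuous under weak $L^1$ convergence) and then pass to the supremum by monotone convergence. Everything else---equicontinuity, Arzel\`a--Ascoli, absolute continuity of the limit---is a routine consequence of the superlinear growth already established in the preceding subsection.
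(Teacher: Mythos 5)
Your proof is correct and follows essentially the same route as the paper, whose entire argument is the one-line observation that the lemma follows from Arzel\`a--Ascoli together with the superlinear growth $\lim_{|\zeta|\to\infty}L(\zeta)/|\zeta|=\infty$. You supply the details the paper omits, in particular the closedness step (equi-integrability of $\dot\gamma_n$, weak $L^1$ convergence, and lower semicontinuity of the convex integral functional), which is needed to upgrade relative compactness to compactness and is handled correctly.
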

\begin{proof}
  This statement is an immediate consequence of the Arzel\`a-Ascoli theorem and the relation
$\lim\limits_{|\zeta|\to\infty} \frac{L(\zeta)}{|\zeta|}=\infty$.
\end{proof}

The next statement is also important for the further analysis.
\begin{proposition}\label{p_no_zigzag}
Let $\xi^\eps_x$ be a Markov process with the generator $A^\eps$ that satisfies conditions \eqref{M1}--\eqref{lm}, and assume that $\gamma(\cdot)$ is an absolutely continuous function,
$\gamma(0)=x$.
Then for any $M>0$ there exists a function $\delta_0(\delta)$, $\delta_0:(0,1]\mapsto \mathbb R^+$
such that $\delta_0(\delta)\to0$ as $\delta\to0$, and for any $\pi\in\mathcal{K}$ with $\ell(\pi)\leq\delta$
we have
$$
\mathbb P\big\{\{\sup\limits_{0\leq t\leq T}|\xi_x^\eps(t)-\gamma(\pi(t))|\geq \delta_0\}\cap\{
|\xi_x^\eps(j\delta)-\gamma(\pi(j\delta))|\leq\delta,\,{\textstyle  j=0,\ldots,\frac T\delta} \}\big\}\leq \exp\big\{{\textstyle -\frac M\eps}\big\}
$$
 for all sufficiently small $\eps>0$.  Moreover, for any $s>0$ and for all sufficiently small $\eps>0$,
 $$
\sup\limits_{\gamma\in\Phi(s)}\mathbb P\big\{\{\sup\limits_{0\leq t\leq T}|\xi_x^\eps(t)-\gamma(\pi(t))|\geq \delta_0\}\cap\{
|\xi_x^\eps(j\delta)-\gamma(\pi(j\delta))|\leq\delta,\,{\textstyle  j=0,\ldots,\frac T\delta} \}\big\}\leq \exp\big\{{\textstyle -\frac M\eps}\big\},
$$
where $\Phi(s)=\{\gamma\in \mathbf{D}([0,T],\mathbb R^d)\,:\,I(\gamma)\leq s, \,\gamma(0)=x\}$.
\end{proposition}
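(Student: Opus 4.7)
The plan is to control the supremum on each subinterval $[j\delta,(j+1)\delta]$ separately using an exponential martingale and Doob's maximal inequality, and then to absorb the $T/\delta$ union bound into the super-linear growth of $L$.

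\textbf{Reduction to short intervals.} First I would produce a uniform modulus of continuity $\omega(\delta)\to 0$ for the family $\{\gamma\circ\pi:\gamma\in\Phi(s),\,\pi\in\mathcal{K},\,\ell(\pi)\le\delta\}$. For a single absolutely continuous $\gamma$ this is immediate from $\dot\gamma\in L^1$; for the uniform statement, Lemma \ref{l_compa_in_c} shows that $\Phi(s)$ is compact (hence equicontinuous) in $C([0,T];\mathbb R^d)$, while $\ell(\pi)\le\delta$ yields $|\pi(t)-\pi(s)|\le e^{\delta}(t-s)$. If the checkpoint event holds and $|\xi^\eps(t^*)-\gamma(\pi(t^*))|\ge\delta_0$ for some $t^*\in[j\delta,(j+1)\delta]$, the triangle inequality gives $|\xi^\eps(t^*)-\xi^\eps(j\delta)|\ge r_0:=\delta_0-\delta-\omega(\delta)$. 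Hence the event in question is contained in $\bigcup_{j=0}^{T/\delta-1}\{\sup_{[j\delta,(j+1)\delta]}|\xi^\eps(t)-\xi^\eps(j\delta)|\ge r_0\}$.

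\textbf{Exponential martingale and Doob.} For each $\lambda\in\Gamma$, the computation of Section \ref{ss_prop_alambda} that precedes \eqref{St-3} shows that $f_\lambda(x):=e^{\lambda\cdot x/\eps}u_\lambda(x/\eps)$ satisfies $A^\eps f_\lambda=(H(\lambda)/\eps)f_\lambda$. Consequently $M_t:=f_\lambda(\xi^\eps(t))e^{-tH(\lambda)/\eps}$ is a positive martingale for $\xi^\eps$, and the two-sided bounds $c^-\le u_\lambda\le c^+$ of \eqref{est_eiggg} together with Doob's maximal inequality, applied after the strong Markov property at time $j\delta$, yield for each unit vector $e\in S^{d-1}$ and each $\alpha\ge R_0$ (so that $\alpha e\in\Gamma$, by Lemma \ref{l_exp_growth}):
\[
\mathbb P\Big\{\sup_{t\in[j\delta,(j+1)\delta]}e\cdot(\xi^\eps(t)-\xi^\eps(j\delta))\ge r_0\Big\}\le\frac{c^+}{c^-}\exp\Big\{-\frac{\alpha r_0-\delta H(\alpha e)}{\eps}\Big\}.
\]
Covering $S^{d-1}$ by $O(d)$ directions and optimizing over $\alpha$ gives the single-interval estimate $\le C_d\exp\{-(\delta L(r_0/\delta)-o(1))/\eps\}$.

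\textbf{Choice of $\delta_0$ and conclusion.} By the super-linear growth $L(\zeta)/|\zeta|\to\infty$, and more precisely by the quantitative lower bound analogous to \eqref{Phi_oneside} which transfers to $L$ under assumption \eqref{lt}, the choice $\delta_0(\delta)=\delta^\beta$ with a small $\beta\in(0,1)$ makes $r_0\sim\delta^\beta$, $r_0/\delta\sim\delta^{\beta-1}\to\infty$, and $\delta L(r_0/\delta)\to\infty$ as $\delta\to 0$. Thus for any fixed $M$ and for all sufficiently small $\delta$ the single-interval rate exceeds $M+\log(T/\delta)+O(1)$, and summing over the $T/\delta$ intervals produces the required bound $\exp\{-M/\eps\}$ once $\eps$ is small enough. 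The uniform version over $\Phi(s)$ follows at once because $\omega(\delta)$, and hence $\delta_0(\delta)$, depend only on $s$ and $M$.

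\textbf{Main obstacle.} The most delicate point is justifying that $M_t$ is a genuine (not merely local) martingale on $[0,T]$ with exponential moments uniform in $\eps$, so that Doob's inequality gives a sharp quantitative bound even when $\alpha=|\lambda|\to\infty$ with $r_0/\delta$; this relies on the super-exponential decay \eqref{lt} and the transition-density estimates already invoked just after \eqref{St-1}. A secondary technical point is making the error terms in the optimization uniform in $\lambda$ so that the super-linear growth of $L$ dominates both the logarithmic factor $\log(T/\delta)$ from the union bound and the checkpoint slack $\delta+\omega(\delta)$ absorbed into $r_0$.
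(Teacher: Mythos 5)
Your strategy is genuinely different from the paper's: you control the supremum on each short interval via the exponential eigenfunction martingale $f_\lambda(\xi^\eps(t))e^{-tH(\lambda)/\eps}$ and Doob's maximal inequality, whereas the paper first dominates the transition density of $\xi^\eps$ by that of an auxiliary process with independent increments and \emph{symmetric} kernel $a_s(z)=\mathtt{C}_0e^{-k|z|^p}$ (paying a harmless factor $e^{\mathtt{C}_1T/\eps}$), then converts the fixed-time G\"artner--Ellis estimate into a supremum estimate by a reflection argument (Lemma \ref{l_est_sproc}: $\mathbb P\{\sup_{t\le\tau}|\eta^\eps(t)-x|\ge\delta_0\}\le 2\,\mathbb P\{|\eta^\eps(\tau)-x|\ge\delta_0\}$, which uses the symmetry of $a_s$). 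Your reduction to short intervals and the union bound are the same as the paper's, and the martingale route is viable in principle; note that the integrability worry you flag is not the real obstacle, since a nonnegative local martingale is automatically a supermartingale and the maximal inequality applies.

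There are, however, two genuine gaps. First, the explicit choice $\delta_0(\delta)=\delta^\beta$ is wrong. Under \eqref{lt} the Lagrangian is only \emph{barely} superlinear: by \eqref{Phi}--\eqref{Phi_oneside} one has $L(\zeta)\asymp|\zeta|(\ln|\zeta|)^{(p-1)/p}$ at infinity (and no better in general), so
$\delta L\big(\tfrac{\delta^\beta}{\delta}\phi\big)\asymp \delta^\beta\big((1-\beta)\ln\tfrac1\delta\big)^{(p-1)/p}\to 0$,
not $+\infty$; the polynomial decay of $\delta_0$ kills the logarithmic gain. The single-interval exponent must beat the fixed threshold $M$, so $\delta_0$ has to decay \emph{logarithmically}, e.g.\ $\delta_0(\delta)=(\ln\frac1\delta)^{-(p-1)/(2p)}$, which is exactly why the paper only postulates the property $\min_\phi\delta L^s(\frac{\delta_0\phi}{2\delta})\to\infty$ rather than prescribing $\delta_0$ explicitly. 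Since the Proposition only requires existence of some $\delta_0(\delta)\to0$, this is repairable, but as written your choice makes the key inequality fail. Second, your Doob bound carries the prefactor $c^+(\lambda)/c^-(\lambda)$ from \eqref{est_eiggg}, and in the optimization the relevant $|\lambda|$ tends to infinity as $\delta\to0$; you would need to show that $\ln\big(c^+(\lambda)/c^-(\lambda)\big)$ stays negligible relative to $\alpha r_0-\delta H(\alpha e)$ along the chosen $\alpha=\alpha(\delta)$. You acknowledge this uniformity issue but do not resolve it; the paper's comparison-plus-symmetrization route avoids eigenfunctions altogether and hence this difficulty does not arise there.
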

\begin{proof}
Consider an auxiliary process $\eta^{\eps}(\cdot)$ with generator
$$
A_{\rm sym}^\eps v(x) =\frac1{\eps^{d+1}}\int_{\mathbb R^d} a_s\big({\textstyle\frac{x-y}\eps}\big)(v(y)-v(x))\,dy,
$$
where
$$
a_s(z)=\mathtt{C}_0 e^{- k |z|^p}, \quad  \mathtt{C}_0=\Lambda^+\mathtt{C},
$$
with the same $p$, $k$, $\mathtt{C}$ and $\Lambda^+$ as those in \eqref{lt} and \eqref{lm}.
For the transition densities of the processes $\xi_x^\eps(\cdot)$ and $\eta_x^{\eps}(\cdot)$ we use the notation
$q^\eps(x,y,t)$ and $q^\eps_s(x,y,t)$, respectively. We also define a function $q^\eps_+(x,y,t)$ as the solution of the following problem
$$
\partial_t q(x,y,t)=\frac1{\eps^{d+1}}\int_{\mathbb R^d}a_s\big({\textstyle\frac{y-z}\eps}\big)q(x,z,t)\,dz,\qquad
q(0,x,y)=\delta(y-x).
$$
By the maximum principle we have
\begin{equation}\label{max_prii}
q^\eps(x,y,t)\leq q^\eps_+(x,y,t) \qquad\hbox {for all }x,\,y\in\mathbb R^d\ \hbox{and all }t\geq 0.
\end{equation}
It is also clear that
$$
q_+^{\eps}(x,y,t) =\exp\big({\textstyle \frac{\mathtt{C}_1t}\eps}\big)q_s^{\eps}(x,y,t) \quad\hbox{with }
\mathtt{C}_1=\int_{\mathbb R^d}\mathtt{C}_0\exp(-k|z|^p)\,dz.
$$
The Hamiltonian and the Lagrangian that correspond to the process $\eta^{\eps}$ are defined in the same way
as in the previous section.  Namely,
$$
H^s(\lambda)=\int_{\mathbb R^d}\mathtt{C}_0\exp(-\lambda\cdot z-k|z|^p)\,dz-\mathtt{C}_1,\quad
L^s(\zeta)=\max\limits_{\lambda\in\mathbb R^d}(\zeta\cdot\lambda-H^s(\lambda)).
$$
One can easily check that both $H^s$ and $L^s$ are smooth strictly convex functions and, moreover,
$\frac{L^s(\zeta)}{|\zeta|}\to+\infty$ as $|\zeta|\to\infty$.

Considering the continuity of $\gamma(\cdot)$ we can construct a function $\delta_0(\delta)$ such that
\begin{itemize}
  \item $\delta_0(\delta)\to0$ as $\delta\to0$.
  \item $|\gamma(t')-\gamma(t'')|\leq \frac14\delta_0$ if $|t'-t''|\leq3\delta$.
  \item $\min\limits_{\phi\in S^{d-1}}\big\{\delta L^s\big(\frac{\delta_0\phi}{2\delta}\big)\big\}\to+\infty$
  as $\delta\to0$.
\end{itemize}
\begin{lemma}\label{l_est_sproc}
For any $\delta_0$ and any $\tau>0$ we have
$$
\mathbb{P}\{\sup\limits_{0\leq t\leq\tau}|\eta_x^{\eps}(t)-x|\geq \delta_0\}\leq
2 \mathbb{P}\{|\eta_x^{\eps}(\tau)-x|\geq \delta_0\}.
$$
\end{lemma}
\begin{proof}
Denote by $\mathcal{E}_0$ and $\mathcal{E}_1$ the events
$$
\mathcal{E}_0=\{\sup\limits_{0\leq t\leq\tau}|\eta_x^{\eps}(t)-x|\geq \delta_0\},\quad
\mathcal{E}_1=\{|\eta_x^{\eps}(\tau)-x|\leq \delta_0\}
$$
Both $\mathcal{E}_0$ and $\mathcal{E}_1$ depend on $\eps$, however, we do not indicate this dependence
explicitly.
Due to the symmetry of $a_s(\cdot)$ by the Markov property we have
$$
\mathbb{P}(\mathcal{E}_0\cap\mathcal{E}_1)
=\mathbb{P}(\mathcal{E}_1\big|\mathcal{E}_0)\mathbb{P}(\mathcal{E}_0)
<\frac12\mathbb{P}(\mathcal{E}_0).
$$
Therefore,
$$
\mathbb{P}(\mathcal{E}^c_1)=\mathbb{P}(\mathcal{E}_0\cap\mathcal{E}^c_1)>\frac12\mathbb{P}(\mathcal{E}_0),
$$
and the desired statement follows.
\end{proof}
By the G\"artner-Ellis theorem
 for all sufficiently small $\eps>0$ we have
$$
\mathbb{P}\{|\eta_x^{\eps}(\delta)-x|\geq \delta_0\}\leq \exp\big({\textstyle-\frac\delta\eps \min\limits_{\phi\in S^{d-1}}
L^s(\frac{\delta_0\phi}{\delta})}\big).
$$
For arbitrary $M>0$ we choose small enough $\delta>0$ such that $\min\limits_{\phi\in S^{d-1}} \delta
L^s(\frac{\delta_0(\delta)\phi}{2\delta})\geq 2M$. Then, for sufficiently small $\eps>0$ and for any
$\pi\in\mathcal{K}$ with $\ell(\pi)\leq\delta$,
$$
\mathbb P\big\{\{\sup\limits_{0\leq t\leq T}|\eta_x^{\eps}(t)-\gamma(\pi(t))|\geq \delta_0(\delta)\}\cap\{
|\eta_x^{\eps}(j\delta)-\gamma(\pi(j\delta))|\leq\delta,\,{\textstyle  j=0,\ldots,\frac T\delta} \}\big\}
$$
\begin{equation}\label{ineq_for_symm}
\leq\mathbb P\big\{{\textstyle \sup\limits_{0\leq t\leq \delta}|\eta_x^{\eps}(t+j\delta)-\eta_x^{\eps}(j\delta)|\geq
\frac{\delta_0(\delta)}2 \ \  \hbox{for some } j\leq\frac T\delta} \big\}
\end{equation}
$$
\leq {\textstyle\frac{T}\delta\exp\big(-\frac\delta\eps \min\limits_{\phi\in S^{d-1}}
L^s(\frac{\delta_0\phi}{2\delta})}\big)
\leq {\textstyle\frac{T}\delta\exp\big\{{\textstyle -\frac{2M}\eps}\big\}}\leq
\exp\big\{{\textstyle -\frac M\eps}\big\}.
$$

Next, for any partition of the interval $[0,T]$, $0\leq t_1\leq\ldots\leq t_{N_1}\leq T$,
and for any collection of domains ${\mathcal{B}_1},\ldots,{\mathcal{B}_{N_1}}$  the following inequality
holds:
$$
\mathbb P\big\{\bigcap\limits_{j=1}^{N_1}\{\xi_x^\eps(t_j)\in \mathcal{B}_j\big\}
$$
$$
=\int\limits_{\mathcal{B}_1}\!\!q^\eps(x,y^1,t_1)dy^1
\int\limits_{\mathcal{B}_2}\!\!q^\eps(y^1,y^2,t_2-t_1)dy^2\ldots
\int\limits_{\mathcal{B}_{N_1}}\!\!q^\eps(y^{N_1-1},y^{N_1},t_{N_1}-t_{N_1-1})dy^{N_1}
$$
$$
\leq \int\limits_{\mathcal{B}_1}\!\!q_+^{\eps}(x,y^1,t_1)dy^1
\int\limits_{\mathcal{B}_2}\!\!q_+^{\eps}(y^1,y^2,t_2-t_1)dy^2\ldots
\int\limits_{\mathcal{B}_{N_1}}\!\!q_+^{\eps}(y^{N_1-1},y^{N_1},t_{N_1}-t_{N_1-1})dy^{N_1}
$$
$$
\leq\exp\big({\textstyle\frac{\mathtt{C}_1\,T}\eps }\big)\times
$$
$$
\times
\int\limits_{\mathcal{B}_1}\!\!q_s^{\eps}(x,y^1,t_1)dy^1
\int\limits_{\mathcal{B}_2}\!\!q_s^{\eps}(y^1,y^2,t_2-t_1)dy^2\ldots
\int\limits_{\mathcal{B}_{N_1}}\!\!q_s^{\eps}(y^{N_1-1},y^{N_1},t_{N_1}-t_{N_1-1})dy^{N_1}
$$
$$
=\exp\big({\textstyle\frac{\mathtt{C}_1\,T}\eps }\big)\mathbb P\big\{\bigcap\limits_{j=1}^{N_1}\{\eta_x^{\eps}(t_j)\in \mathcal{B}_j\big\}
$$
Combining this inequality with  \eqref{ineq_for_symm} yields the first inequality  stated in Proposition.

In order to prove the second one it suffices to observe that, due to the compactness of the set $\Phi(s)$
in $C([0,T];\mathbb R^d)$, the function $\delta_0(\delta)$ can be chosen in such a way that
$|\gamma(t')-\gamma(t'')|\leq \frac14\delta_0$ if $|t'-t''|\leq3\delta$ for all $\gamma\in\Phi(s)$.
\end{proof}
\begin{proposition}\label{p_nonabsco}
For any $\gamma\in \mathbf{D}([0,T];\mathbb R^d)$, $\gamma(0)=x$,  that is not absolutely continuous
we have
$$
\lim\limits_{\delta\to0}\limsup\limits_{\eps\to0}\eps\log\big(\mathbb P\big\{\mathrm{dist}(\xi^\eps_x(\cdot),\gamma(\cdot))\leq\delta\big\}\big)=-\infty.
$$
\end{proposition}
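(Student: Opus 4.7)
The strategy is a skeleton argument combining the marginal LDP from Theorem~\ref{l_findim_per} with the Markov property, which reduces the super-exponential estimate to showing that a discrete Lagrangian action along a suitably chosen partition of $[0,T]$ can be made arbitrarily large whenever $\gamma$ fails to be absolutely continuous.

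Fix a non-absolutely-continuous $\gamma$, a threshold $M>0$, and a partition $0=t_0<t_1<\cdots<t_N=T$ to be chosen below. Rewriting the Skorokhod distance via $\sigma=\pi^{-1}$, the event $\{\mathrm{dist}(\xi^\eps_x,\gamma)\leq\delta\}$ forces the existence of $\sigma\in\mathcal{K}$ with $\ell(\sigma)\leq\delta$ and $|\xi^\eps_x(\sigma(t))-\gamma(t)|\leq\delta$ for every $t$. Setting $\sigma_k=\sigma(t_k)$, one then has $|\xi^\eps_x(\sigma_k)-\gamma(t_k)|\leq\delta$ and $e^{-\delta}\Delta t_k\leq\Delta\sigma_k\leq e^\delta\Delta t_k$, where $\Delta t_k=t_k-t_{k-1}$ and $\Delta\sigma_k=\sigma_k-\sigma_{k-1}$. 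Covering the admissible tuples $(\Delta\sigma_k)$ by a finite $\delta$-net of cardinality independent of $\eps$ and applying the Markov property yields
\begin{equation*}
\mathbb{P}\{\mathrm{dist}(\xi^\eps_x,\gamma)\leq\delta\}\leq K(\delta)\sup_{(\sigma_k)}\prod_{k=1}^N\sup_{|y-\gamma(t_{k-1})|\leq 2\delta}\mathbb{P}\{|\xi^\eps_y(\Delta\sigma_k)-\gamma(t_k)|\leq 2\delta\}.
\end{equation*}
Since the marginal rate $L_t$ in Theorem~\ref{l_findim_per} is independent of the starting position in the purely periodic setting, each factor is bounded by $\exp\{-\eps^{-1}[\Delta\sigma_k L((\gamma(t_k)-\gamma(t_{k-1}))/\Delta\sigma_k)-\omega(\delta)]\}$ with $\omega(\delta)\to 0$ as $\delta\to 0$. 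Taking logarithms, sending $\eps\to 0$ and then $\delta\to 0$, the proposition reduces to exhibiting a partition for which
\begin{equation*}
\inf_{(\sigma_k)}\sum_{k=1}^N\Delta\sigma_k\,L\Big(\frac{\gamma(t_k)-\gamma(t_{k-1})}{\Delta\sigma_k}\Big)\geq M,
\end{equation*}
the infimum running over admissible $(\sigma_k)$, for all sufficiently small $\delta$.

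To produce such a partition I would split into two cases. If $\gamma$ is discontinuous at some $t^\ast$ with jump $|\gamma(t^\ast)-\gamma(t^{\ast-})|=J>0$, include a short interval $(t^\ast-h,t^\ast]$ in the partition; the corresponding term is at least $\Delta\sigma\cdot L(v)$ with $|v|\geq e^{-\delta}J/(2h)$ and $\Delta\sigma\geq e^{-\delta}h$, hence at least $(J/2)\cdot L(v)/|v|$, which diverges as $h\to 0$ by the superlinear growth of $L$. If $\gamma$ is continuous but not absolutely continuous, take $\eta>0$ witnessing the failure of absolute continuity; for every $\mu>0$ one finds disjoint intervals $(a_i,b_i)$ with $\sum_i(b_i-a_i)<\mu$ and $\sum_i|\gamma(b_i)-\gamma(a_i)|\geq\eta$. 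Discarding intervals of average slope below $\eta/(2\mu)$ leaves a subfamily that still accounts for at least $\eta/2$ of the total displacement and all of whose slopes exceed $\eta/(2\mu)$. Using these intervals as consecutive pieces of the partition contributes at least $m(\eta/(2\mu))\cdot\eta/2$ with $m(R):=\inf_{|v|\geq R}L(v)/|v|\to+\infty$, while the $e^{\pm\delta}$ factor from the time-change only scales slopes by a bounded constant and preserves the blow-up.

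The main obstacle is establishing the Markov-step LDP estimate uniformly in the starting point $y$ and in the varying time-increments $\Delta\sigma_k$; the periodicity of $\Lambda$ is essential here because it renders the marginal rate function independent of the slow starting position, allowing a translation-invariant one-step bound. A secondary but delicate point is the combinatorial extraction of high-slope subintervals in the singular-continuous case, where the failure of absolute continuity is converted into an arbitrarily large contribution to the discrete action through the superlinear growth of $L$.
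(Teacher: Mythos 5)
Your route is genuinely different from the paper's. The paper does not attack the skeleton/discrete-action estimate at all: it bounds the transition density of $\xi^\eps$ by that of the independent-increments process with constant rate $\Lambda^+$ via the maximum principle, at the cost of a factor $\exp\big(\frac{(\Lambda^+-\Lambda^-)T}{\eps}\big)$, passes this comparison from finite-dimensional cylinders to the Skorokhod tube, and then simply invokes Pukhalskii's path-space LDP for compound Poisson processes, for which the rate of a non-absolutely-continuous path is already known to be $+\infty$. The merely exponential prefactor cannot spoil a super-exponential decay, and the proof ends there. You instead re-prove the Pukhalskii-type statement from scratch in the periodic setting; your treatment of the two sources of non-absolute continuity (a jump, or a singular-continuous part extracted by discarding low-slope intervals) is the standard Mogulskii--Pukhalskii mechanism and is correct, including the handling of the $e^{\pm\delta}$ distortion of time increments.

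However, there is a genuine gap in your reduction to the discrete action, and it is not the one you flag. The event $\{\mathrm{dist}(\xi^\eps_x,\gamma)\le\delta\}$ gives you $|\xi^\eps_x(\sigma_k)-\gamma(t_k)|\le\delta$ only at the \emph{random} times $\sigma_k=\sigma(t_k)$. Covering the admissible tuples $(\Delta\sigma_k)$ by a finite net and then ``applying the Markov property'' silently replaces $\sigma_k$ by a nearby deterministic net point $\tilde\sigma_k$; but $|\xi^\eps_x(\sigma_k)-\gamma(t_k)|\le\delta$ does not imply $|\xi^\eps_x(\tilde\sigma_k)-\gamma(t_k)|\le 2\delta$ unless you control the oscillation of $\xi^\eps_x$ on the short random interval between $\sigma_k$ and $\tilde\sigma_k$. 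This requires an additional super-exponential bound on $\mathbb P\{\sup_{|s-s'|\le\rho}|\xi^\eps_x(s)-\xi^\eps_x(s')|\ge\delta_0\}$ of exactly the kind the paper establishes separately (Proposition \ref{p_no_zigzag}, built on the symmetric majorant process and the reflection-type Lemma \ref{l_est_sproc}); without it the union bound over the net is not valid. The obstacle you do name --- uniformity of the one-step upper bound in the starting point $y$ and in the time increment --- is real but milder: the paper itself uses the same uniform version of Theorem \ref{l_findim_per} in \eqref{upp_perio1} without further comment, and it follows because the Hamiltonian $H(\lambda)$ is independent of the starting point. So your argument is salvageable, but as written the skeleton step needs the missing oscillation estimate to go through.
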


\begin{proof}
 Consider  auxiliary operators defined by
$$
A^{\rm u} v(x)=\int_{\mathbb R^d}\Lambda^+a(x-y)v(y)\,dy-\Lambda^-v(x)\int_{\mathbb R^d}a(x-y)\,dy
$$
and
$$
A^+ v(x)=\int_{\mathbb R^d}\Lambda^+a(x-y)v(y)\,dy-\Lambda^+v(x)\int_{\mathbb R^d}a(x-y)\,dy
$$
and the corresponding scaled operators
$$
A^{{\rm u},\eps}v(x)=\frac1{\eps^{d+1}}\int_{\mathbb R^d}\Lambda^+a\big(\textstyle{\frac{x-y}\eps}\big)v(y)\,dy-\displaystyle{\frac1{\eps^{d+1}}
\Lambda^-v(x)\int_{\mathbb R^d}}a\big(\textstyle{\frac{x-y}\eps}\big)\,dy.
$$
and
$$
A^{+,\eps}v(x)=\frac1{\eps^{d+1}}\int_{\mathbb R^d}\Lambda^+a\big(\textstyle{\frac{x-y}\eps}\big)v(y)\,dy-\displaystyle{\frac1{\eps^{d+1}}
\Lambda^+v(x)\int_{\mathbb R^d}}a\big(\textstyle{\frac{x-y}\eps}\big)\,dy.
$$
Denote by $q^{{\rm u},\eps}(x,y,t)$, $q^{+,\eps}(x,y,t)$ and $q^{\eps}(x,y,t)$ the  solutions of the equations
$$
\partial_t q=A^{{\rm u},\eps}q,\quad
\partial_t q=A^{+,\eps}q\ \ \hbox{and }\ \partial_t q=A^{\eps}q,
$$
respectively, with the common initial condition $q(x,y,0)=\delta(y-x)$.\\
Since $\Lambda^+\geq \Lambda(x,y)$ and $\Lambda^-\leq \Lambda(x,y)$ for all $x$ and $y$ from $\mathbb R^d$,
 by the maximum principle we have
\begin{equation}\label{compar_max}
q^\eps(x,y,t)\leq q^{{\rm u},\eps}(x,y,t) \quad \hbox{for all }x,\,y\in\mathbb R^d\ \ \hbox{and }t>0.
\end{equation}
It is also clear that
$$
q^{{\rm u},\eps}(x,y,t) =\exp\big({\textstyle \frac{(\Lambda^+-\Lambda^-)\;t}\eps}\big)q^{+,\eps}(x,y,t)
$$
For an arbitrary partition $0\leq t_1<t_2<\ldots<t_N\leq T$ of the interval $[0,T]$, an arbitrary
set $x^1,\ldots,x^N$, $x^j\in\mathbb R^d$ and any $\delta>0$ we have
$$
\mathbb P\big\{\bigcap\limits_{j=1}^N\{|\xi^\eps(t_j)-x_j|\leq\delta\}\big\}
$$
$$
=\int\limits_{Q_\delta(x^1)}\!\!\!\!q^\eps(0,y^1,t_1)dy^1
\int\limits_{Q_\delta(x^2)}\!\!\!\!q^\eps(y^1,y^2,t_2-t_1)dy^2\ldots
\int\limits_{Q_\delta(x^N)}\!\!\!\!q^\eps(y^{N-1},y^N,t_N-t_{N-1})dy^N
$$
$$
\leq \int\limits_{Q_\delta(x^1)}\!\!\!\!q^{{\rm u},\eps}(0,y^1,t_1)dy^1
\int\limits_{Q_\delta(x^2)}\!\!\!\!q^{{\rm u},\eps}(y^1,y^2,t_2-t_1)dy^2\ldots
\int\limits_{Q_\delta(x^N)}\!\!\!\!q^{{\rm u},\eps}(y^{N-1},y^N,t_N-t_{N-1})dy^N
$$
$$
=\exp\big({\textstyle\frac{(\Lambda^+-\Lambda^-)\,T}\eps }\big)\times
$$
$$
\times
\int\limits_{Q_\delta(x^1)}\!\!\!\!q^{+,\eps}(0,y^1,t_1)dy^1
\int\limits_{Q_\delta(x^2)}\!\!\!\!q^{+,\eps}(y^1,y^2,t_2-t_1)dy^2\ldots
\int\limits_{Q_\delta(x^N)}\!\!\!\!q^{+,\eps}(y^{N-1},y^N,t_N-t_{N-1})dy^N.
$$
Let $\gamma$ be an arbitrary curve in ${\bf D}([0,T];\mathbb R^d)$ which is not absolutely continuous. Setting $x^j=\gamma(t_j)$,
taking uniform partitions of the interval $[0,T]$ and sending $N$ to infinity, from the last relation we deduce
$$
\mathbb P\big\{\sup\limits_{0\leq t\leq T}|\xi^\eps(t)-\gamma(\pi(t))|\leq\delta\}\big\}\leq
\exp\big({\textstyle\frac{(\Lambda^+-\Lambda^-)\,T}\eps }\big)
\mathbb P\big\{\sup\limits_{0\leq t\leq T}|\xi^{+,\eps}(t)-\gamma(\pi(t))|\leq\delta\}\big\};
$$
here $\xi^{+,\eps}(t)$ is a process with independent increments whose generator is $A^{+,\eps}$.

Due to \cite{Pukh94}, for any $\gamma$ that is not absolutely continuous this yields
\begin{equation}\label{non_abscont}
\lim\limits_{\delta\to0}\limsup\limits_{\eps\to0}\,\mathbb P\big\{\mathrm{dist}(\xi^\eps(\cdot),\gamma(\cdot))\leq\delta\}\big\}=-\infty=-I_\Lambda(\gamma).
\end{equation}
This implies the desired statement.
\end{proof}


The main result of this section reads.
\begin{theorem}
Let $\Lambda(x,y, \xi,\eta)=\Lambda(\xi,\eta)$, and assume that $\Lambda(\xi,\eta)$ is a measurable function for which conditions \eqref{M1}--\eqref{cond_perio} and \eqref{lm} are fulfilled. Then the process $\xi_x^\eps(t)$, $0\leq t\leq T$, satisfies in $\mathbf{D}([0,T]\,;\,\mathbb R^d) $ the large deviation principle with the rate function $I(\cdot)$ introduced in \eqref{rate_f_per}.\\ In particular,
for any $\gamma\in\mathbf{D}([0,T]\,;\,\mathbb R^d)$, $\gamma(0)=x$, the following relation holds:
\begin{equation}\label{dop_formulirov}
\lim\limits_{\delta\to0}\lim\limits_{\eps\to0}\eps\log \mathbb P\big\{ \mathrm{dist}(\xi^\eps_x(\cdot),
\gamma(\cdot))\leq \delta\big\}=-I(\gamma).
\end{equation}

\end{theorem}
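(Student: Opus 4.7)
The plan is to prove the theorem in two stages: first, establish the local asymptotic formula \eqref{dop_formulirov} at every $\gamma\in\mathbf{D}([0,T];\mathbb R^d)$ with $\gamma(0)=x$; second, deduce the full Skorokhod-space LDP by combining this local LDP with exponential tightness of $\{\xi_x^\eps\}_\eps$. By Lemma \ref{l_compa_in_c} the putative rate function $I$ is already good when restricted to $C([0,T];\mathbb R^d)$, and its extension to $\mathbf{D}$ by $+\infty$ on non-absolutely-continuous paths remains lower semicontinuous with compact sublevel sets in the strong Skorokhod topology.

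For the lower bound in \eqref{dop_formulirov} I may assume $\gamma$ is absolutely continuous with $I(\gamma)<\infty$. Fix a uniform partition $t_k=kT/N$ and set $v_k=(\gamma(t_{k+1})-\gamma(t_k))/(t_{k+1}-t_k)$; by Jensen's inequality and absolute continuity,
\[
\sum_{k=0}^{N-1}(t_{k+1}-t_k)\,L(v_k)\;\longrightarrow\;\int_0^T L(\dot\gamma(s))\,ds=I(\gamma)\qquad(N\to\infty).
\]
The Markov property bounds the $\delta$-Skorokhod tube probability around $\gamma$ from below by the probability that $\xi^\eps_x$ visits the balls $B(\gamma(t_k),\delta/2)$ at the grid points, which factorises into $N$ independent-looking increment probabilities. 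Applying Theorem \ref{l_findim_per} on each subinterval produces
\[
\eps\log\mathbb P\{\mathrm{dist}(\xi^\eps_x,\gamma)\leq\delta\}\;\geq\;-\sum_{k=0}^{N-1}(t_{k+1}-t_k)\,L(v_k)-\psi(\delta)+o_\eps(1),
\]
with $\psi(\delta)\to0$. Sending $\eps\to0$, then $\delta\to0$, then $N\to\infty$ gives the desired $-I(\gamma)$.

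For the upper bound in \eqref{dop_formulirov}, Proposition \ref{p_nonabsco} settles the case of non-absolutely-continuous $\gamma$. For absolutely continuous $\gamma$, Proposition \ref{p_no_zigzag} shows that up to probability $e^{-M/\eps}$ (with $M$ arbitrary) the tube event is contained in the event of staying within $\delta$ of $\gamma$ at all $N+1$ grid points. The Markov property and the upper bound part of Theorem \ref{l_findim_per} applied to each increment then yield
\[
\eps\log\mathbb P\{\mathrm{dist}(\xi^\eps_x,\gamma)\leq\delta_0\}\;\leq\;-\sum_{k=0}^{N-1}(t_{k+1}-t_k)\,L(v_k)+o(1),
\]
and refining the partition gives the matching bound $-I(\gamma)$.

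To upgrade from the local LDP to the full one I need exponential tightness in the strong Skorokhod topology. The natural candidates for compact sets are $\delta_0$-Skorokhod neighbourhoods $K_s$ of $\Phi(s)=\{I\leq s,\,\gamma(0)=x\}$, which are compact thanks to Lemma \ref{l_compa_in_c}. The uniform assertion in Proposition \ref{p_no_zigzag}, combined with the superlinear growth of $L$ and the finite-dimensional upper bound of Theorem \ref{l_findim_per}, gives $\mathbb P(\xi^\eps_x\notin K_s)\leq e^{-s/\eps}$ for small $\eps$. The hard part, I expect, is handling the non-strict convexity of $L$ on the segments emanating from the origin: the basic partition/Markov argument only bounds the probability by $\exp(-\eps^{-1}\sum(t_{k+1}-t_k)L(v_k))$, which matches $I(\gamma)$ in the limit, but to ensure that no discrete shortcut routed through non-exposed velocities produces a cheaper rate in the upper bound on closed sets one must propagate the intermediate-point splitting device from the proof of Theorem \ref{l_findim_per} uniformly through the partition. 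Once this technical step is secured, the full LDP in $\mathbf{D}([0,T];\mathbb R^d)$ follows by the standard combination of local LDP and exponential tightness.
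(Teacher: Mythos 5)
Your overall route coincides with the paper's: local tube asymptotics obtained by chaining the one-increment LDP of Theorem \ref{l_findim_per} through a partition via the Markov property, Proposition \ref{p_nonabsco} disposing of non-absolutely-continuous paths, and the estimate $\mathbb P\{\mathrm{dist}(\xi^\eps_x(\cdot),\Phi(s))>\delta_0\}\le e^{-(s-\varkappa)/\eps}$ (the paper's Lemma \ref{l_up_gener}) to upgrade to the full LDP. However, two steps are mis-assembled. In the lower bound you claim the tube probability is bounded \emph{below} by the probability of hitting the balls $B(\gamma(t_k),\delta/2)$ at the grid points; the containment goes the other way (staying in the tube implies hitting the balls, but hitting the balls says nothing about the path between grid points), so that inequality is false as written. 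The correct chaining is $\mathbb P\{\hbox{tube}\}\ge\mathbb P\{\hbox{grid points close}\}-\mathbb P\{\hbox{large oscillation between grid points}\}$, and it is precisely Proposition \ref{p_no_zigzag} that makes the subtracted term at most $e^{-M/\eps}$ with $M=I(\gamma)+1$; this is the paper's display \eqref{lolobou}. Conversely, in the upper bound Proposition \ref{p_no_zigzag} is not needed: the tube event is trivially contained in the grid-point event.

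Second, the ``hard part'' you flag --- that non-exposed velocities might produce a cheaper rate in the upper bound --- is not where any difficulty lies. The G\"artner--Ellis \emph{upper} bound holds on every closed set with the full Legendre transform, irrespective of exposedness; the exposed-point restriction affects only the lower bound, and the intermediate-point splitting device is already built into Theorem \ref{l_findim_per}, so applying that theorem on each subinterval requires nothing further to be ``propagated''. The paper's Lemma \ref{l_up_gener} simply applies the $(Nd)$-dimensional G\"artner--Ellis upper bound to the vector of increments, splitting the complement of the neighbourhood of $\Phi(s)$ into the event where the piecewise-linear interpolant has small rate (controlled by Proposition \ref{p_no_zigzag}) and the event where it has rate at least $s$ (controlled by the multi-increment upper bound). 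One genuine point your sketch glosses over, and which you should make explicit, is that the chaining requires the one-increment bounds of Theorem \ref{l_findim_per} to hold uniformly over starting points $y$ in a small ball around $\gamma(t_j)$: since $\Lambda$ is periodic in the fast variable, the law of $\xi^\eps_y(t)-y$ depends on $y$ through $y/\eps$, so this uniformity is not an immediate consequence of the pointwise statement and must be checked.
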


\begin{proof}
 For any $\gamma(\cdot)$ that is not absolutely continuous  the relation
 $$
\lim\limits_{\delta\to0} \limsup\limits_{\eps\to0}\eps\log\big[{\textstyle \mathbb P\big\{\{\mathrm{dist}(\xi_x^\varepsilon(\cdot),\gamma(\cdot))\leq\delta\}}\big]=-\infty
 $$
 follows from Proposition \ref{p_nonabsco}.

 Assume that $\gamma(\cdot)$ is absolutely continuous, and $\int_0^T L(\dot\gamma)\,dt<+\infty$.
 We consider a piece-wise linear approximation of $\gamma$ defined by
 $$
  \gamma_N(t)=\left\{
  \begin{array}{ll}
  \gamma(t) &\hbox{if } t=0,\frac1N,\frac2N, \ldots, T\\
  \gamma(t_j)+(\gamma(t_{j+1})-\gamma(t_j))\frac{t-t_j}{t_{j+1}-t_j}&\hbox{if }t\in(t_j,t_{j+1}).
\end{array}
\right.
  $$
 For any $\varkappa>0$ there exists $N_0=N_0(\varkappa)$ such that for any $N\geq N_0$
 $$
 0\leq \int_0^T L(\dot\gamma(t))\,dt - \int_0^T L(\dot\gamma_N(t))\,dt\leq \varkappa.
 $$
Denote $\delta=\frac1N$.
 Then, by Proposition \ref{p_no_zigzag} there exists a function $\delta_0(\delta)$, $\delta_0:(0,1]\mapsto\mathbb R^+$,
 such that $\delta_0(\delta)\to0$ as $\delta\to0$,
 and for any $\pi\in\mathcal{K}$ with $\ell(\pi)\leq\delta$
 \begin{equation}\label{per_zazor}
 \begin{array}{c}
 \displaystyle
 \mathbb P\big\{\! \{|\xi_x^\eps(t_j)\!-\gamma(\pi(t_j))| \leq\delta,\,j=0,\ldots,N\}\cap
 \{\sup\limits_{0\leq t\leq T}|\xi_x^\eps(t)-\gamma(\pi(t))| \geq\delta_0\}\!\big\} \\[4mm]
 \leq\,
 \exp\big(-\frac M\eps(1+o(1))\big),
 \end{array}
 \end{equation}
 where $M=I(\gamma)+1$ and $o(1)\to0$ as $\eps\to0$.

In order to achieve the upper bound we fix $N\geq N_0$ and choose $\delta_1>0$ in such a way that
for any $\pi\in\mathcal{K}$ with $\ell(\pi)\leq\delta_1$
\begin{equation}\label{upp_perio1}
\begin{array}{c}
\displaystyle
\mathbb P\{|\xi_y^\eps(t_{j+1}-t_j)-(\gamma(\pi(t_{j+1}))-\gamma(\pi(t_j)))|\leq \delta_1\}\\[2mm]
\displaystyle
\leq
\exp\big[{\textstyle -\frac{t_{j+1}-t_j}\eps \big\{L\big(\frac{\gamma(t_{j+1})-\gamma(t_j)}{t_{j+1}-t_j}\big)}
-\varkappa\big\}\big]\\[2mm]
\displaystyle
=\exp\big({\textstyle -\frac{t_{j+1}-t_j}\eps \big\{L(\dot\gamma_N(t))\big._{t\in(t_j,t_{j+1})}}
-\varkappa\big\}\big)
\end{array}
\end{equation}
for all $y$ such that $|y-\gamma(\pi(t_j))|\leq\delta_1$ and for all sufficiently small $\eps$.
This choice is possible due to Theorem \ref{l_findim_per}.
Considering the Markov property of the process $\xi^\eps(t)$ we deduce from \eqref{upp_perio1} that for all
sufficiently small $\eps>0$
the following inequalities hold:
$$
\begin{array}{c}
\displaystyle
\mathbb P\{\sup\limits_{0\leq t\leq T}|\xi_x^\eps(t)-\gamma(\pi(t))|\leq \delta_1\}\leq
\mathbb P\{|\xi_x^\eps(t_j)-\gamma(\pi(t_j))|\leq \delta_1,\, j=0,\ldots, N\}\\
\displaystyle
\leq \prod\limits_{j=0}^{N-1}\exp\big({\textstyle -\frac{t_{j+1}-t_j}\eps \big\{L(\dot\gamma_N(t))\big._{t\in(t_j,t_{j+1})}}
-\varkappa\big\}\big)\\
\displaystyle
=\exp\big({\textstyle -\frac1\eps\big\{ \int_0^TL(\dot\gamma_N(t))\,dt
-T\varkappa\big\}}\big)\leq\exp\big({\textstyle -\frac1\eps\big\{ \int_0^TL(\dot\gamma(t))\,dt
-(T+1)\varkappa\big\}}\big)
\end{array}
$$
This yields the desired upper bound in \eqref{dop_formulirov}.

\medskip
The lower bound can be obtained in a similar way.
It suffices to combine the statement of Theorem \ref{l_findim_per} with \eqref{per_zazor} and use the Markov property of $\xi^\eps(\cdot)$. Indeed, for any $\delta_0>0$ and $\varkappa>0$ we choose the corresponding
$\delta>0$ and $\delta_1>0$ so that \eqref{per_zazor} holds
 and
\begin{equation}\label{lowe_perio1}
\begin{array}{c}
\displaystyle
\mathbb P\{|\xi_y^\eps(t_{j+1}-t_j)-(\gamma(t_{j+1})-\gamma(t_j))|\leq \delta_1\}\\[2mm]
\displaystyle
\geq
\exp\big[{\textstyle -\frac{t_{j+1}-t_j}\eps \big\{L\big(\frac{\gamma(t_{j+1})-\gamma(t_j)}{t_{j+1}-t_j}\big)}
+\varkappa\big\}\big]\\[2mm]
\displaystyle
=\exp\big({\textstyle -\frac{t_{j+1}-t_j}\eps \big\{L(\dot\gamma_N(t))\big._{t\in(t_j,t_{j+1})}}
+\varkappa\big\}\big)
\end{array}
\end{equation}
for all $y$ such that $|y-\gamma(t_j)|\leq\delta_1$ and all sufficiently small $\eps>0$.
 Then 
considering the statement of  Proposition \ref{p_no_zigzag} we have
$$
\mathbb P\{\sup\limits_{0\leq t\leq T}|\xi_x^\eps(t)-\gamma(t)|\leq \delta_0\}
$$
$$
\geq\mathbb P\big\{ \{|\xi_x^\eps(t_j)-\gamma(t_j)| \leq\delta_1,\,j=0,\ldots,N\}\cap
 \{\sup\limits_{0\leq t\leq T}|\xi_x^\eps(t)-\gamma(t)| \leq\delta_0\}\big\}
$$
$$
\geq\mathbb P\big\{ \{|\xi_x^\eps(t_j)-\gamma(t_j)| \leq\delta_1,\,j=0,\ldots,N\}-\exp\big({\textstyle-\frac M\eps}\big)
$$
\begin{equation}\label{lolobou}
\geq \prod\limits_{j=0}^{N-1}\exp\big({\textstyle -\frac{t_{j+1}-t_j}\eps \big\{L(\dot\gamma_N(t))\big._{t\in(t_j,t_{j+1})}}
+\varkappa\big\}\big)-\exp\big({\textstyle-\frac M\eps}\big)
\end{equation}
$$
\geq\exp\big({\textstyle -\frac1\eps\big\{ \int_0^TL(\dot\gamma_N(t))\,dt
+T\varkappa\big\}}\big)\geq\exp\big({\textstyle -\frac1\eps\big\{ \int_0^TL(\dot\gamma(t))\,dt
+T\varkappa\big\}}\big);
$$
here we have also used that fact that $M=I(\gamma)+1$.
This completes the proof of the lower bound in \eqref{dop_formulirov}.

\medskip
In order to justify the large deviation principle we need one more estimate.
Recall that for any $s\in\mathbb R$  the symbol $\Phi(s)$ denotes
$
\Phi(s)=\{\gamma(\cdot)\in\mathbf{D}([0,T];\mathbb R^d)\,:\, I(\gamma)\leq s,\,\gamma(0)=x\}.
$
Observe that the set $\Phi(s)$ consists of absolutely continuous curves and, according to Lemma \ref{l_compa_in_c}, this set is compact.
\begin{lemma}\label{l_up_gener}
For any $s\in\mathbb R$, any $\varkappa>0$ and any $\delta_0>0$ for all
sufficiently small $\eps>0$   the following inequality holds:
\begin{equation}\label{oc_upgene}
\mathbb P\{\mathrm{dist}(\xi^\eps_x(\cdot),\Phi(s))>\delta_0\}\leq \exp\big\{{\textstyle -\frac{s-\varkappa}\eps}\big\}.
\end{equation}
\end{lemma}
\begin{proof}
For any trajectory $\xi_x^\eps(\cdot)$ and any $\delta=\frac TN$, $N\in\mathbb Z^+$  denote
by $\gamma^\eps_{\delta,\omega}(t)$  a piece-wise linear function such that
$$
\gamma^\eps_{\delta,\omega}(j\delta)=\xi_x^\eps(j\delta),\quad j=0,\,1\,\ldots,N;
$$
the argument $\omega$ indicates that $\gamma^\eps_\delta(\cdot)$ is a random function, in what follows
the dependence on $\omega$ is not indicated explicitly.  We choose $\delta>0$ such that
 $$
 |\gamma(t')-\gamma(t'')|\leq \frac14\delta_0,\quad \hbox{if }|t'-t''|\leq\delta\ \hbox{ and }I(\gamma(\cdot))\leq s,
 $$
 and
 $$
 \min\limits_{\phi\in S^{d-1}}\big\{\delta L\big(\frac{\delta_0\phi}{2\delta}\big)\big\}\geq s+1.
 $$
Denote by $\mathcal{E}_-$ and   $\mathcal{E}_+$ the events
$$
\begin{array}{c}
\displaystyle
\mathcal{E}_-=\{\xi^\eps_x(\cdot)\not\in \Phi_{\delta_0}(s),\, I(\gamma^\eps_\delta)< s\},
\qquad
\mathcal{E}_+=\{\xi^\eps_x(\cdot)\not\in \Phi_{\delta_0}(s),\, I(\gamma^\eps_\delta)\geq s\},
\end{array}
$$
where $\Phi_{\delta_0}(s)=\{\gamma(\cdot)\in\mathbf{D}([0,T];\mathbb R^d)\,:\,\mathrm{dist}(\gamma,\Phi(s))
\leq \delta_0\}$.
By Proposition \ref{p_no_zigzag} for all sufficiently small $\eps>0$ we have
\begin{equation}\label{esti_e-}
\mathbb P(\mathcal{E}_-)\leq\mathbb P\big\{\sup\limits_{0\leq t\leq T}
|\xi^\eps_x(t)-\gamma^\eps_\delta(t)|\geq\delta_0\big\}
\leq \exp\big({\textstyle -\frac {s+1}\eps}\big).
\end{equation}

Consider a $(Nd)$-dimensional vector $\{\xi^\eps_x((j+1)\delta)-\xi^\eps_x(j\delta)\}_{j=1}^{N-1}$.
By Theorem \ref{l_findim_per} taking into account Markov property of $\xi^\eps(\cdot)$ we deduce that
the family of random vectors $\{\xi^\eps_x((j+1)\delta)-\xi^\eps_x(j\delta)\}_{j=0}^{N-1}$ satisfies for any
$\lambda_0,\ldots,\lambda_{N-1}\in\mathbb R^d$
the following relation
$$
\lim\limits_{\eps\to0}\eps\log\mathbb E\Big\{\exp\Big[\sum\limits_{j=0}^{N-1}\lambda_j\cdot
(\xi^\eps_x((j+1)\delta)-\xi^\eps_x(j\delta))\Big]\Big\}=\sum\limits_{j=0}^{N-1}\delta h(\lambda_j).
$$
By the G\"artner-Ellis theorem this implies the upper large deviation bound with the rate function
$$
L_\delta(p_1)+L_\delta(p_2)+\ldots+L_\delta(p_N),\quad p_j\in\mathbb R^d,
$$
where $L_\delta(p)=\delta L\big(\frac p\delta\big)$, as was defined in \eqref{def_ham_lag}.
For an arbitrary piece-wise linear function $\gamma$ corresponding to the partition  $\{j\delta\}_{j=0}^N$ we have
$$
I(\gamma)=\sum\limits_{j=0}^{N-1}L_\delta(\gamma((j+1)\delta)-\gamma(j\delta)).
$$
Therefore, by the G\"artner-Ellis theorem, for sufficiently small $\eps>0$ we have
$$
\begin{array}{c}
\displaystyle
\mathbb P(\mathcal{E}_+)\leq\mathbb P\big\{ I(\gamma^\eps_\delta)\geq s\big\}\\
\displaystyle
=\mathbb P\Big\{ \sum\limits_{j=0}^{N-1}L_\delta\big(\xi^\eps_x((j+1)\delta)-\xi^\eps_x(j\delta)\big)\geq s\Big\}
\leq
\exp\big({\textstyle -\frac{s-\varkappa}\eps}\big).
\end{array}
$$
Combining this estimate with \eqref{esti_e-} yields the desired statement.
\end{proof}

From the proof of Lemma \ref{l_up_gener} it follows that for any $s_0>0$ inequality \eqref{oc_upgene} holds
uniformly in $s\in [0,s_0]$, that is for any $\varkappa>0$ and $\delta_0>0$ there exists $\eps_0>0$ such that
 \eqref{oc_upgene} holds for all $\eps\leq\eps_0$ and all $s\leq s_0$.

 It is then well known, see for instance \cite{FW}, that  the lower bound in \eqref{lolobou} and Lemma \ref{l_up_gener} imply the large deviation principle stated in
  Theorem.
\end{proof}

\section{Environments with slowly varying characteristics}
\label{s_slowly}

In this section we consider the case of environments whose characteristics $\Lambda(x,y)$ do not depend on the
fast variables i.e. $\Lambda$ is a continuous function on $\mathbb R^{2d}$ for which condition \eqref{lm}
is fulfilled.   Our approach in this section is somehow inspired by the small perturbations arguments used in the previous works, in particular in the Wentzell-Freidlin theory, see \cite{FW}.
However,
the results from  these works do not apply directly to the operators considered in the present paper
and require some adaptation.

Under the assumptions of this section the generator of  $\xi^\varepsilon(t)$ takes the form
\begin{equation}\label{A-nonosc}
A^\varepsilon u (x) = \frac{1}{\varepsilon^{d+1}} \int_{\mathbb{R}^d} a(\frac{x-y}{\varepsilon}) \Lambda(x,y) (u(y) - u(x)) dy, \; u \in L^2(\mathbb{R}^d),
\end{equation}
$\varepsilon>0$ is a small parameter, and the convolution kernel $a(z)$ in \eqref{A-nonosc} satisfies conditions
\eqref{M1}--\eqref{M2-bis} introduced in the previous section.

\begin{remark}
According to Corollary \ref{cor_eff_drift} below
the Markov jump process  $\xi^\varepsilon(t)$ is a small random perturbation of a deterministic trajectory determined by an ordinary differential equation $\dot{x} = b(x)$ with
$$
b(x) = -\Lambda(x,x) \int a(z)\, z \, dz.
$$
\end{remark}


\subsection{Markov process with slow variables}

We turn now to the case of non-constant $\Lambda(x,y)$ that does not depend on the fast variables and recall that the function $\Lambda(x,y)$ is continuous in both variables and satisfies condition \eqref{lm}. Since $\Lambda$ does not depend on the fast variables,
condition \eqref{cond_conti} can be replaced with the following continuity condition
\begin{equation}\label{cond_conti_bis}
\Lambda(x,y) \quad\hbox{is continuous on }\mathbb R^d\times
\mathbb R^d.
\end{equation}
After changing variables $\tilde x = \frac{x}{\varepsilon}$ the operator $A^\varepsilon$ in \eqref{A-nonosc} takes the form
\begin{equation}\label{A-nonosc-1}
\tilde A^\varepsilon u (\tilde x) = \frac{1}{\varepsilon} \int_{\mathbb{R}^d} a(\tilde x- \tilde y) \Lambda(\varepsilon \tilde x, \varepsilon \tilde y) \big( u(\tilde y) - u(\tilde x) \big) d \tilde y.
\end{equation}
The  Hamiltonian $H(x,\lambda)$ and the Lagrangian $L(x, \zeta)$ are introduced in this case as follows:
\begin{equation}\label{specX}
H(x,\lambda) = \Lambda(x,x) \Big( \int a(z) e^{-\lambda z} dz -1 \Big) = \Lambda(x,x) H(\lambda),
\end{equation}
\begin{equation}\label{Lege}
L(x, \zeta) =  \sup\limits_\lambda \big\{ \lambda \zeta -  \Lambda(x,x) H(\lambda)\big\}
=\Lambda(x,x) L\big({\textstyle \frac \zeta{\Lambda(x,x)}}\big).
\end{equation}

Observe that the function $L(x,\zeta)$ is continuous and non-negative on $\mathbb R^d\times\mathbb R^d$. Moreover,  it is smooth and strictly convex in $\zeta \in \mathbb{R}^d$.
The corresponding {\sl rate function} $I_\Lambda$ is defined by
$$
I_\Lambda(\gamma(\cdot)) = \left\{\begin{array}{ll}
\displaystyle
\int_0^T   {\textstyle L\big(\gamma(t),\dot\gamma(t)\big) dt},&\hbox{if }\gamma\ \hbox{is absolutely continuous,}\\[3mm]
+\infty, &\hbox{otherwise}.
\end{array}
\right.
$$
\begin{theorem}\label{th_slowvar}
Under assumptions \eqref{M1}--\eqref{M2_conseq}, \eqref{lm} and \eqref{cond_conti_bis} the family
of processes $\{\xi^\eps(t),\,0\leq t\leq T\}$ satisfies, as $\eps\to0$, the large deviation principle in the Skorokhod space $\mathbf D([0,T];\mathbb R^d)$ with the rate function $I_\Lambda(\cdot)$.
\end{theorem}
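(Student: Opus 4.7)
The plan is to transfer the two-step strategy of Section \ref{ss_ldp_pureper} from the periodic setting to the slowly varying case, substituting the periodic spectral problem by a local \emph{freezing} of $\Lambda$ at the current position of the process. The Hamiltonian identity $H(x,\lambda)=\Lambda(x,x)H(\lambda)$ in \eqref{specX} is precisely what is needed for this: it says that, heuristically, the local effective rate at position $x$ coincides with $\Lambda(x,x)$ and only the diagonal value matters in the limit.

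I would first establish a finite-time LDP analogous to Theorem \ref{l_findim_per}: for every $x_0,\,x\in\mathbb R^d$ and $t>0$,
\begin{equation*}
\lim_{\delta\to0}\lim_{\eps\to0}\eps\log\mathbb P\big\{|\xi^\eps_{x_0}(t)-x_0-x|\leq\delta\big\}=-tL\big(x_0,\tfrac{x}{t}\big).
\end{equation*}
On the event that $\xi^\eps_{x_0}$ stays in a ball of small radius $\rho$ around $x_0$, and since typical jumps are of order $\eps$, the coefficient $\Lambda(\xi^\eps(s),y)$ equals $\Lambda(x_0,x_0)+O(\omega_\Lambda(\rho+\eps))$, where $\omega_\Lambda$ is a modulus of continuity of $\Lambda$ on a suitable compact. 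To make this rigorous I would sandwich $\xi^\eps_{x_0}$ between two processes with independent increments whose intensities equal $\Lambda(x_0,x_0)\pm\omega_\Lambda(\rho+\eps)$; the constant-coefficient LDPs \eqref{ULDB}--\eqref{LLDB} then apply, and sending $\eps\to0$ before $\rho\to0$ gives the asserted limit by continuity of $r\mapsto rL(\zeta/r)$.

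Once the finite-time LDP is available, the path-space LDP follows by the piecewise-linear approximation argument of Section \ref{ss_ldp_pureper}. For an absolutely continuous $\gamma$ with $\gamma(0)=x$ and a uniform mesh $\delta=T/N$ with nodes $t_j=j\delta$, the Markov property combined with the one-step estimate applied with freezing at $\gamma(t_j)$ yields, both as an upper and as a lower bound,
\begin{equation*}
\eps\log\mathbb P\big\{\sup_{0\leq t\leq T}|\xi^\eps_x(t)-\gamma(t)|\leq\delta_1\big\}=-\sum_{j=0}^{N-1}\delta\,L\big(\gamma(t_j),\dot\gamma_N(t_j^+)\big)+o(1),
\end{equation*}
as $\eps\to0$ and then $\delta\to0$. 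By continuity of $L(x,\zeta)$ in both arguments the right-hand side converges to $-\int_0^T L(\gamma,\dot\gamma)\,dt$. Propositions \ref{p_no_zigzag} and \ref{p_nonabsco} apply unchanged in the present setting, because their proofs invoke only the two-sided bound \eqref{lm} and a comparison with symmetric independent-increment processes; they provide the fluctuation control between mesh points and the exclusion of non-absolutely-continuous curves. The exponential tightness analogue of Lemma \ref{l_up_gener} transfers verbatim, so the large deviation principle follows in the standard way.

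The main obstacle is the freezing step. On the logarithmic scale $\eps\log$, one must verify that replacing $\Lambda(x,y)$ by $\Lambda(x_0,x_0)$ introduces only an error $o(1)$ as $\rho,\eps\to0$, uniformly over the range of $\lambda$ used in the G\"artner--Ellis computation. The atypical large jumps that could sample far-away values of $\Lambda$ are controlled by the super-exponential tail \eqref{Phi_oneside}, while the small-jump error is bounded by the uniform continuity of $\Lambda$ on compacts guaranteed by \eqref{cond_conti_bis}. All remaining arguments are essentially formal rewritings of those in Sections \ref{ss_ldp_pureper} and \ref{s_indep}.
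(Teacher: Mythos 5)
Your overall strategy coincides with the paper's: freeze $\Lambda$ at the node $\gamma(t_j)$ on each mesh interval, compare with constant-coefficient independent-increment processes, sum the one-step estimates using the Markov property, and reuse Propositions \ref{p_no_zigzag} and \ref{p_nonabsco} (which indeed apply verbatim, being stated for any generator satisfying \eqref{M1}--\eqref{lm}). However, your intermediate ``finite-time LDP'' is false as stated. For a \emph{fixed} $t>0$ and an arbitrary displacement $x$, the rate of $\mathbb P\{|\xi^\eps_{x_0}(t)-x_0-x|\le\delta\}$ is not $tL(x_0,x/t)$ but the variational quantity $\inf\{\int_0^t L(\gamma,\dot\gamma)\,ds:\gamma(0)=x_0,\ \gamma(t)=x_0+x\}$, which for nonconstant $\Lambda$ generically differs from $tL(x_0,x/t)$: the process must traverse a region of diameter $|x|$ and samples the values of $\Lambda$ along the way, so the freezing-at-$x_0$ heuristic breaks down. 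Your own proof sketch betrays this --- you condition on the process staying in a ball of radius $\rho$ around $x_0$, which is incompatible with reaching $x_0+x$ unless $|x|\le\rho$. (In the periodic case of Theorem \ref{l_findim_per} the identity $tL(x/t)$ does hold for fixed $t$, but only because there the homogenized Lagrangian has no macroscopic $x$-dependence, so the straight line is optimal by convexity; that is exactly the feature lost here.)

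The statement you actually need, and the one the paper proves, is the short-time version: the estimate is applied only with $t=\delta=T/N\to0$ and displacement $\gamma(t_{j+1})-\gamma(t_j)\to0$, so that the process stays (up to the superexponentially unlikely excursions controlled by Proposition \ref{p_no_zigzag}) within a ball of radius $\delta_0(\delta)\to0$ of $\gamma(t_j)$, and the freezing error per unit time is a modulus of continuity $\nu(\delta_0)$ whose total contribution $e^{C\nu(\delta_0)T/\eps}$ vanishes on the exponential scale after $\delta\to0$. Two further points deserve care and are handled explicitly in the paper: the ``sandwich'' between the processes with intensities $\Lambda(x_0,x_0)\pm\nu$ cannot be a pathwise domination but is a comparison of transition densities via the maximum principle (as in \eqref{compar_max}), producing the prefactor $\exp(2\nu(\delta_0)T/\eps)$; and the frozen coefficient must be glued to the true one away from the curve (the paper's auxiliary process $\widetilde\xi^{\eps,N}$), with the discrepancy between the two densities shown to be exponentially negligible. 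If you restate your first step as this short-time, small-displacement estimate, the rest of your argument goes through and is essentially the paper's proof.
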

\begin{proof}

\medskip
Consider  an absolutely continuous curve $\gamma(\cdot)$ such that
$$
I_\Lambda(\gamma)=\int_0^T L_{\Lambda(\gamma(t),\gamma(t))}(\dot\gamma(t))\,dt<+\infty.
$$
We first justify the upper bound.   For any $N\geq 2$ denote by $\gamma_N$ a piece-wise linear
interpolation of $\gamma$ such that $\gamma_N(t_j)=\gamma(t_j)$ with $t_j=j\frac TN$, $j=0,1,\ldots,N$,
and by $\widehat\gamma_N$ the corresponding piece-wise constant interpolation, $\widehat\gamma_N(t)=
\gamma(t_j)$ for $t\in [t_j,t_j+\frac1N)$.
For any $\varkappa>0$ there exists $\delta>0$ such that for all $N\geq\delta^{-1}$ we have
$$
\int_0^T{L}\big._{\Lambda(\widehat\gamma_N(t),\widehat\gamma_N(t))}(\dot\gamma_N(t))\,dt>
I_\Lambda(\gamma)-\varkappa.
$$

Denote by $\nu(s)$ the modulus of continuity of $L(x,y)$ in $1$-neighbourhood of the curve $\gamma$.
Since $ \min\limits_{\phi\in S^{d-1}}r^{-1}L(r\phi)$ tends to infinity as $r\to\infty$, there exists a function $\delta_0(\delta)>0$ such that
$\delta_0(\delta)\to 0$ as $\delta\to 0$, and $\min\limits_{\phi\in S^{d-1}}\big\{\delta L(\frac {r\phi}\delta)\,:\,r\geq \delta_0\big\}\to\infty$.
It is  then clear that for any sufficiently small $\delta>0$ there exists $\delta_1(\delta)>0$ such that
$$
\Big|\sum_{j=0}^{N-1} \delta L\big._{(\Lambda(\gamma(t_j),\gamma(t_j))+\nu(\delta_0))}\big(\frac{x_{j+1}-x_j}\delta\big)-
\int_0^T{L}\big._{\Lambda(\widehat\gamma_N(t),\widehat\gamma_N(t))}(\dot\gamma_N(t))\,dt\Big|\leq\varkappa,
$$
if $|x_j-\gamma(t_j)|\leq\delta_1$, $j=0,\ldots,N $.

Consider a Markov process $\xi_x^{\eps,N}(t)$, $0\leq t\leq T$, whose generator on the interval $[t_j,t_j+\frac1N)$
is
$$
A_{t_j}^\eps v(x)=
\frac1{\eps^{d+1}}\int_{\mathbb R^d}(\Lambda(\gamma(t_j),\gamma(t_j))+\nu(\delta_0) ) a\big(\frac{x-y}\eps\big)(v(y)-v(x))dy,\quad
j=0,1,\ldots, N.
$$
We also define a Markov process $\widetilde\xi_x^{\eps,N}(t)$, $0\leq t\leq T$ such that its generator on the interval $[t_j,t_j+\frac1N)$ reads
$$
\widetilde A_{t_j}^\eps v(x)=
\frac1{\eps^{d+1}}\int_{\mathbb R^d}(\Lambda_{t_j}(x,y ) a\big(\frac{x-y}\eps\big)(v(y)-v(x))dy,\quad
j=0,1,\ldots, N,
$$
with $\Lambda_{t_j}(x,y)=\Lambda(\gamma(t_j),\gamma(t_j))+\nu(\delta_0)$  if $|x|+|y|\leq\delta_0$,
and $\Lambda_{t_j}(x,y)=\Lambda(x,y)$ otherwise. \\
Using the inequality similar to that in \eqref{compar_max} we conclude that for any $x$ such that
$|x-\gamma(t_j)|\leq\delta_1$ the density $\widetilde q^{\eps,N}(t,x,y)$ of the process $\widetilde\xi_x^{\eps,N}(t)$, $\widetilde\xi_x^{\eps,N}(t_j)=x$,
on the set $\{(y,t)\,:\,t_j\leq t\leq t_j+\delta,\ |y-\gamma(t_j)|>2\delta_0\}$ does not exceed
$\exp\big(-\frac\delta\eps L\big._{\Lambda^+}(\frac{\delta_0}\delta)\big)$.
Denote by $q^{\eps,N}(t,x,y)$  the density of the process $\xi_x^{\eps,N}(t)$, $\xi_x^{\eps,N}(t_j)=x$.
Straightforward computations show that the difference $\Xi^\eps(t,x,y)=\widetilde q^{\eps,N}(t,x,y)-q^{\eps,N}(t,x,y)$
satisfies on the interval $(t_j,t_j+\delta)$ the equation
$$
\partial_t\Xi^\eps=\int_{\mathbb R^d}[\Lambda(\gamma(t_j),\gamma(t_j))+\nu(\delta_0) ]
a\big({\textstyle \frac{y-z}\eps}\big)(\Xi^\eps(t,x,z)-(\Xi^\eps(t,x,y))\,dz+R^\eps(t,x,y)
$$
 with
 $$
 |R^\eps(t,x,y)|\leq\exp\Big(-\frac\delta{2\eps} L\big._{\Lambda^+}(\frac{\delta_0}\delta)\Big)
 $$
 and $\Xi^\eps(t_j,x,y)=0$.

With the help of the standard a priori estimates this yields
$$
\|\widetilde q^{\eps,N}(t,x,y)-q^{\eps,N}(t,x,y)\|_{L^2(\mathbb R^d)}\leq
\exp\Big(-\frac\delta{4\eps} L\big._{\Lambda^+}(\frac{\delta_0}\delta)\Big).
$$
We choose $\delta>0$ in such a way that $\frac\delta4 L\big._{\Lambda^+}(\frac{\delta_0}\delta)>M$
with $M=I_\Lambda(\gamma)+1$.
Combining the above estimates we obtain
$$
\mathbb
P\big\{\mathrm{dist}(\xi^\eps(\cdot),\gamma(\cdot))\leq\delta_1\big\}
\leq \exp\big({\textstyle \frac{2\nu(\delta_0)T}\eps}\big)
{\displaystyle
\mathbb P\big\{\mathrm{dist}(\widetilde \xi^{\eps,N}(\cdot),\gamma(\cdot))\leq\delta_1\big\}}
$$
$$
\leq
\exp\big({\textstyle \frac{2\nu(\delta_0)T}\eps}\big)\Big[{\displaystyle
\mathbb P\big\{
\{|\xi^{\eps,N}(t_j)-\gamma(\pi(t_j))|\leq\delta_1,\,j=0,\ldots,N\}\big\}}+
\exp\big({\textstyle -\frac M\eps}\big)
\Big]
$$
\begin{equation}\label{up_traj_slow}
\leq\exp\big({\textstyle \frac{2\nu(\delta_0)T}\eps\big)
\exp\bigg[\frac{(1+o(1))}\eps}{\displaystyle
\bigg(-\int_0^T{L}\big._{\Lambda(\widehat\gamma_N(t),\widehat\gamma_N(t))}(\dot\gamma_N(t))\,dt+
\varkappa\bigg)}\bigg]
\end{equation}
$$
\leq\exp\big({\textstyle \frac{2\nu(\delta_0)T}\eps\big)
\exp\big[\frac{(1+o(1))}\eps\
\big(-I_\Lambda(\gamma)+
2\varkappa\big)\big],}
$$
where $o(1)$ tends to zero as $\eps\to0$. This implies the desired upper bound.

\medskip
We turn to the lower bound. Here we introduce $\delta_1=\delta_1(\delta)$ and
$\delta_0=\delta_0(\delta)$ in such a way that
$$
\Big|\sum_{j=0}^{N-1} \delta L\big._{(\Lambda(\gamma(t_j),\gamma(t_j))-\nu(\delta_0))}\big(\frac{x_{j+1}-x_j}\delta\big)-
\int_0^T{L}\big._{\Lambda(\widehat\gamma_N(t),\widehat\gamma_N(t))}(\dot\gamma_N(t))\,dt\Big|\leq\varkappa,
$$
Define Markov processes $\xi_{-,x}^{\eps,N}(t)$ and $\widetilde\xi_{-,x}^{\eps,N}(t)$, $0\leq t\leq T$, whose generators on the interval $[t_j,t_j+\frac1N)$ read, respectively,
$$
A_{-,t_j}^\eps v(x)=
\frac1{\eps^{d+1}}\int_{\mathbb R^d}[\Lambda(\gamma(t_j),\gamma(t_j))-\nu(\delta_0) ] a\big(\frac{x-y}\eps\big)(v(y)-v(x))dy,\quad
j=0,1,\ldots, N.
$$
and
$$
\widetilde A_{-,t_j}^\eps v(x)=
\frac1{\eps^{d+1}}\int_{\mathbb R^d}(\Lambda_{-,t_j}(x,y ) a\big(\frac{x-y}\eps\big)(v(y)-v(x))dy,\quad
j=0,1,\ldots, N,
$$
with $\Lambda_{-,t_j}(x,y)=\Lambda(\gamma(t_j),\gamma(t_j))-\nu(\delta_0)$  if $|x|+|y|\leq\delta_0$,
and $\Lambda_{t_j}(x,y)=\Lambda(x,y)$ otherwise. \\
By comparison with the process $\xi^{+,\eps}(t)$ one can show that for any $M>0$ for sufficiently small $\delta>0$
we have
$$
\mathbb P\big\{\sup\limits_{t_j\leq t\leq t_j+\delta}|\xi^{\eps}(t)-\xi^{\eps}(t_j)|\geq\delta_0\big\}
\leq \exp\big({\textstyle -\frac M\eps}\big).
$$
Using this inequality and choosing $M=I_\Lambda(\gamma)+2$, in the same way as in the proof of the upper bound we obtain
$$
\mathbb P\big\{\sup\limits_{0\leq t\leq T}|\xi^\eps(t)-\gamma(t)|\leq\delta_0\big\}
\geq \mathbb P\big\{\max\limits_{j}|\xi^\eps(t_j)-\gamma(t_j)|\leq\delta_1\big\}-
{\textstyle \exp\big(-\frac {M-1}\eps\big)}
$$
$$
\geq \exp\big({\textstyle -\frac{2\nu(\delta_0)T}\eps}\big)
{\displaystyle
\mathbb P\big\{\max\limits_{j}|\widetilde \xi^{\eps,N}(t_j)-\gamma(t_j)|\leq\delta_1\big\}}
-{\textstyle \exp\big(-\frac {M-1}\eps\big)}
$$
$$
\geq
\exp\big({\textstyle -\frac{2\nu(\delta_0)T}\eps}\big){\displaystyle
\mathbb P\big\{
\{\max\limits_{j}|\xi^{\eps,N}(t_j)-\gamma(t_j)|\leq\delta_1\}\big\}}-
\exp\big({\textstyle -\frac {M-1}\eps}\big)
$$
$$
\geq\exp\big({\textstyle -\frac{2\nu(\delta_0)T}\eps\big)
\exp\bigg[\frac{(1+o(1))}\eps}{\displaystyle
\bigg(-\int_0^T{L}\big._{\Lambda(\widehat\gamma_N(t),\widehat\gamma_N(t))}(\dot\gamma_N(t))\,dt-
\varkappa\bigg)}\bigg]
$$
$$
\geq\exp\big({\textstyle - \frac{2\nu(\delta_0)T}\eps\big)
\exp\big[\frac{(1+o(1))}\eps\
\big(-I_\Lambda(\gamma)-
2\varkappa\big)\big],}
$$
where $o(1)$ tends to zero as $\eps\to0$. This yields the lower bound.

We should also show that  for any $s\geq 0$, any $\delta_0>0$  and any $\varkappa>0$
\begin{equation}\label{oc_upgene_slowvar}
\mathbb P\{\mathrm{dist}(\xi^\eps_x(\cdot),\Phi(s))>\delta_0\}\leq \exp\big\{{\textstyle -\frac{s-\varkappa}\eps}\big\}
\end{equation}
for all sufficiently small $\eps$.
The proof of this inequality relies on the arguments from the proof of Lemma \ref{l_up_gener} and that of inequality \eqref{up_traj_slow}. One should combine these arguments in a straightforward way. We skip the details.
\end{proof}

 \bigskip
Denote
$$
\zeta (x) = {\rm arg}\min\limits_{p} \, L_\Lambda \big(x, p \big).
$$
It is straightforward to check that
\begin{equation}\label{v0}
\zeta(x) = - \Lambda(x,x) \int_{\mathbb R^d} a(z) \, z \, dz=\Lambda(x,x) \nabla H(\lambda)\big|_{\lambda=0}.
\end{equation}
Letting $\gamma^0_x(t)$
be the solution of the ODE
$$
\dot\gamma(t)=\zeta(\gamma(t)),\quad \gamma(0)=x,
$$
one can deduce from the last theorem the following
\begin{corollary} \label{cor_eff_drift}
For any $x\in\mathbb R^d$
$$
\lim\limits_{\eps\to0}\,\mathbb E \big(\sup\limits_{0\leq t\leq T}|\xi_x^\eps(t)-\gamma^0_x(t)|\big) =0.
$$
\end{corollary}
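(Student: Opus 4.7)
The plan is to combine the LDP of Theorem~\ref{th_slowvar} with a tail/moment bound: first deduce convergence in probability to $\gamma_x^0$, then upgrade to $L^1$-convergence via uniform integrability.

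Step one is identifying $\gamma_x^0$ as the unique zero of the rate function. The Lagrangian $L(x,\zeta)=\Lambda(x,x)L(\zeta/\Lambda(x,x))$ is non-negative, and by strict convexity of the scalar function $H$ together with $\nabla H(0)=-\int a(z)z\,dz$, it vanishes exactly at $\zeta=\Lambda(x,x)\nabla H(0)=\zeta(x)$. Hence, among absolutely continuous curves with $\gamma(0)=x$, the condition $I_\Lambda(\gamma)=0$ forces $\dot\gamma(t)=\zeta(\gamma(t))$ a.e., i.e.\ $\gamma=\gamma_x^0$, and in particular $I_\Lambda(\gamma_x^0)=0$.

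Step two is convergence in probability. For $\eta>0$ the set $G_\eta:=\{\gamma\in\mathbf{D}([0,T];\mathbb R^d):\gamma(0)=x,\ \mathrm{dist}(\gamma,\gamma_x^0)\ge\eta\}$ is Skorokhod-closed. The sublevel sets of $I_\Lambda$ are compact in $C([0,T];\mathbb R^d)$ (the argument of Lemma~\ref{l_compa_in_c} carries over since $L(x,\zeta)/|\zeta|\to\infty$ uniformly in $x$), so by lower semicontinuity the infimum $c_\eta:=\inf_{G_\eta}I_\Lambda$ is attained; it must be positive, for otherwise the minimizer would be a zero of $I_\Lambda$ lying in $G_\eta$, contradicting step one. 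The upper LDP bound of Theorem~\ref{th_slowvar} then yields $\mathbb P\{\mathrm{dist}(\xi_x^\eps(\cdot),\gamma_x^0(\cdot))\ge\eta\}\le\exp(-c_\eta/(2\eps))$ for $\eps$ small, and since $\gamma_x^0$ is continuous the Skorokhod and uniform distances are equivalent near it, giving
$$
\mathbb P\bigl\{\sup_{0\le t\le T}|\xi_x^\eps(t)-\gamma_x^0(t)|\ge\delta\bigr\}\longrightarrow 0
\qquad\text{exponentially as }\eps\to 0.
$$

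Step three upgrades this to $L^1$-convergence via a uniform tail bound on $\|\xi_x^\eps\|_{\infty,[0,T]}$. By the super-linear growth of $L$, uniform in $x$, there exist $c>0$, $C$ with $L(x,\zeta)\ge c|\zeta|-C$, hence any $\gamma\in\Phi(s)$ satisfies $\int_0^T|\dot\gamma|\,dt\le(s+CT)/c$ and consequently $\|\gamma\|_\infty\le|x|+C_1 s+C_2$. Combined with inequality~\eqref{oc_upgene_slowvar} this gives, for $K$ large and $\eps$ small,
$$
\mathbb P\bigl\{\|\xi_x^\eps\|_{\infty,[0,T]}>K\bigr\}\le\exp\bigl(-(K/C_1-C')/\eps\bigr),
$$
which ensures uniform integrability of $\sup_t|\xi_x^\eps-\gamma_x^0|$ (even in any $L^p$). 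Convergence in probability from step two then upgrades to the claimed convergence in mean.

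The main technical obstacle is the moment/tail control, namely the linear-in-$s$ bound on the diameter of $\Phi(s)$, which crucially uses the super-linearity of $L$ inherited from the super-exponential decay assumption \eqref{lt}. A cleaner alternative to invoking \eqref{oc_upgene_slowvar} would be a direct martingale argument: writing $\xi_x^\eps(t)=x+\int_0^t[A^\eps\mathrm{id}](\xi_x^\eps(s))\,ds+M^\eps(t)$ and observing that $[A^\eps\mathrm{id}](y)=-\int a(z)\Lambda(y,y-\eps z)z\,dz$ is uniformly bounded while the quadratic variation of $M^\eps$ accumulates at rate $O(\eps)$ per unit time, Doob's inequality yields $\sup_\eps\mathbb E\sup_{t\le T}|\xi_x^\eps(t)|^2<\infty$, which suffices for the uniform integrability step.
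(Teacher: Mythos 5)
Your argument is correct, and it follows the route the paper itself intends: the paper states this corollary without proof, remarking only that it can be deduced from Theorem~\ref{th_slowvar}, and your three steps are precisely the missing deduction. The essential content you supply --- and the part that genuinely needs an argument --- is step three: the LDP alone gives convergence in probability of $\sup_t|\xi_x^\eps(t)-\gamma_x^0(t)|$, and passing to convergence of expectations requires the uniform integrability you establish. Two small points of care. First, in step one the identification of $\gamma_x^0$ as the \emph{unique} zero of $I_\Lambda$ uses uniqueness of solutions of $\dot\gamma=\zeta(\gamma)$, which under the paper's standing hypothesis \eqref{cond_conti_bis} ($\Lambda$ merely continuous) is implicitly assumed rather than guaranteed; this is a gap you share with the paper, not one you introduce. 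Second, inequality \eqref{oc_upgene_slowvar} is asserted ``for all sufficiently small $\eps$'' with a threshold that may depend on $s$ (the paper only claims uniformity over bounded ranges $s\in[0,s_0]$), so your first route to the tail bound $\mathbb P\{\|\xi_x^\eps\|_{\infty,[0,T]}>K\}\le\exp(-(cK-C)/\eps)$ needs $K$ and $\eps$ decoupled with some care. Your proposed martingale alternative --- uniform boundedness of $[A^\eps\mathrm{id}]$ plus an $O(\eps)$ quadratic variation and Doob's inequality, giving $\sup_\eps\mathbb E\sup_{t\le T}|\xi_x^\eps(t)|^2<\infty$ --- sidesteps this entirely and is the cleaner way to close the uniform integrability step; I would adopt it as the primary argument.
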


\section{The general case of locally periodic environment $ \Lambda(x,y, \frac{x}{\varepsilon},  \frac{y}{\varepsilon})$}

In this section we consider the case of the most general locally periodic media. Here we assume that
$\Lambda^\eps(x,y)=\Lambda(x,y,\frac x\eps,\frac y\eps)$, where $\Lambda(x,y,\xi,\eta)$ satisfies
conditions \eqref{cond_perio}--\eqref{lm}.

Here for each $x\in\mathbb R^d$ we introduce a Hamiltonian $H=H(x,\lambda)$  in the same way as
in \eqref{conseq}, $x$ being a parameter. Namely, we set
\begin{equation}\label{conseq_param}
H(x,\lambda) :=\mathtt{s}(A_{x,\lambda}) = \left\{
\begin{array}{l}
\theta(x,\lambda), \; \lambda \in \Gamma(x) \\
- g_{\min}(x), \ \mbox{otherwise};
\end{array}
\right.
\end{equation}
here
$$
A_{x,\lambda}u(z)=
\int_{\mathbb R^d}\Lambda(x,x,z,y)a(z-y)e^{\lambda\cdot(y-z)}u(y)\,dy
- \int_{\mathbb R^d}\Lambda(x,x,z,y)a(z-y)dy\, u(z),
$$
and, for each $x$, we define $\Gamma(x)$, $\theta(x,\lambda)$ and $g_{\min}(x)$  in the same way as in Section \ref{s_perio}.
Then we introduce the corresponding Lagrangian $L(x,\zeta)$.
The main result of this section reads:
\begin{theorem}\label{t_main_gen}
Let conditions \eqref{M1}--\eqref{lm} be fulfilled.  Then the family of processes $\xi^\eps_x(\cdot)$
with the generators $A^\eps$ defined in \eqref{A} satisfies, as $\eps\to0$, the large deviation principle
in the path space $\mathbf{D}([0,T];\mathbb R^d)$;  the corresponding rate function is given by
$$
I(\gamma(\cdot))=\left\{\begin{array}{ll}\displaystyle
\int_0^T L(\gamma(t),\dot\gamma(t))\,dt,& \hbox{\rm if }\gamma\ \hbox{\rm is abs. cont. and }\gamma(0)=x,\\[3mm]
+\infty,& \hbox{\rm otherwise}.
\end{array}
\right.
$$
\end{theorem}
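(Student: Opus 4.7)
My plan is to combine the techniques of Sections \ref{s_perio} and \ref{s_slowly}: treat the slow argument $x$ in $\Lambda(x,y,\xi,\eta)$ as a parameter that is essentially frozen on short time scales, so that after freezing we are reduced to a purely periodic problem already handled in Section \ref{s_perio}. This is the natural generalisation of the scheme of Theorem \ref{th_slowvar}, where the frozen generator was a (constant) compound Poisson generator; here it will instead be the periodic skewed operator studied in Subsection \ref{ss_prop_alambda}.

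Given an absolutely continuous $\gamma$ with $\gamma(0)=x$ and $I(\gamma)<\infty$, I partition $[0,T]$ into $N$ intervals $[t_j,t_{j+1}]$ of length $\delta=T/N$ and replace $\gamma$ by its piecewise linear interpolation $\gamma_N$ through $(\gamma(t_j))$. On each interval I introduce the frozen-slow process $\xi^{\eps,N,j}$ with generator
$$
A^{\eps,j}u(z)=\frac{1}{\eps^{d+1}}\int_{\mathbb R^d}a\Big(\frac{z-y}{\eps}\Big)\Lambda\Big(\gamma(t_j),\gamma(t_j),\frac{z}{\eps},\frac{y}{\eps}\Big)(u(y)-u(z))\,dy.
$$
Its coefficient is purely periodic in the fast variables, so the main theorem of Section \ref{s_perio} provides a path-space LDP for $\xi^{\eps,N,j}$ on $[t_j,t_{j+1}]$ with Hamiltonian $H(\gamma(t_j),\lambda)$ and Lagrangian $L(\gamma(t_j),\cdot)$ defined via \eqref{conseq_param}. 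The uniform continuity of $\Lambda$ in $(x,y)$ gives a modulus $\nu$ such that $|\Lambda(x,y,\xi,\eta)-\Lambda(x_0,x_0,\xi,\eta)|\le \nu(|x-x_0|+|y-x_0|)$ uniformly in $(\xi,\eta)$; this lets me compare $\xi^\eps$ and $\xi^{\eps,N,j}$ exactly as in the proof of Theorem \ref{th_slowvar}, modifying the frozen generator outside a $\delta_0$-neighbourhood of $\gamma(t_j)$ and invoking the maximum principle and a priori $L^2$ estimates on the densities. The mismatch contributes at most a factor $\exp(C\nu(\delta_0)T/\eps)$, negligible on the exponential scale once $\delta_0$ is small.

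With this comparison in hand the full argument proceeds as in Section \ref{s_perio}. Using the Markov property I factorise the probability that $\xi^\eps$ stays within $\delta_1$ of $\gamma_N$ across the partition, apply the frozen local LDP on each piece, and sum the exponents to get, for arbitrary $\varkappa>0$,
$$
\eps\log\mathbb P\{\mathrm{dist}(\xi^\eps,\gamma)\le\delta_1\}=-\sum_{j=0}^{N-1}\!\!\int_{t_j}^{t_{j+1}}\!\!L(\gamma(t_j),\dot\gamma_N(t))\,dt+O(\varkappa)+O(\nu(\delta_0))
$$
for both the upper and lower bounds. Letting $N\to\infty$ and $\delta_0\to 0$, continuity of $L$ in the slow argument converts the Riemann sum into $\int_0^T L(\gamma(t),\dot\gamma(t))\,dt=I(\gamma)$. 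Non-absolutely-continuous $\gamma$ are handled exactly as in Proposition \ref{p_nonabsco}, whose proof uses only the bounds \eqref{lm} and therefore applies verbatim. To promote these one-curve estimates to a genuine LDP I appeal to the no-zigzag estimate of Proposition \ref{p_no_zigzag} (already stated under the general locally periodic assumptions) together with a sublevel-set upper bound analogous to Lemma \ref{l_up_gener}, whose proof carries over once the finite-dimensional G\"artner--Ellis input is replaced by the frozen-periodic version just established.

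The main obstacle is justifying the joint continuity of $L(x,\zeta)$, equivalently the continuity of $H(x,\lambda)$ in $x$. For $\lambda$ in the interior of $\Gamma(x)$ the principal eigenvalue $\theta(x,\lambda)$ is simple and isolated from the essential spectrum $[-g_{\max}(x),-g_{\min}(x)]$, so standard analytic perturbation theory applied to the compact positive operator $D_{x,\lambda}$ from Subsection \ref{ss_prop_alambda} yields local Lipschitz continuity in $x$. The delicate point is the boundary $\partial\Gamma(x)$, where the principal eigenvalue may merge into $-g_{\min}(x)$; one must show that $\mathtt{s}(A_{x,\lambda})$ nevertheless depends continuously on $x$ as $\lambda$ crosses this threshold, so that the Legendre transform $L(x,\cdot)$ is jointly continuous. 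A uniform version of Lemma \ref{l_exp_growth}, giving super-linear growth of $H(x,\lambda)$ at infinity uniformly in $x$, together with Fatou-type arguments for the semigroup $e^{tA_{x,\lambda}}$, provides this continuity and closes the argument.
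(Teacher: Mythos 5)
Your proposal is correct and follows essentially the same route as the paper, whose own proof of Theorem \ref{t_main_gen} consists precisely of the instruction to combine Theorem \ref{l_findim_per} (the frozen-periodic finite-dimensional LDP) with the freezing/discretization and comparison arguments of Theorem \ref{th_slowvar}; your partition-and-freeze scheme, the $\exp(C\nu(\delta_0)T/\eps)$ comparison factor, and the reuse of Propositions \ref{p_nonabsco} and \ref{p_no_zigzag} match that outline. You in fact go somewhat beyond the paper by explicitly flagging and sketching a treatment of the continuity of $H(x,\lambda)$ in the slow variable near $\partial\Gamma(x)$, a point the paper leaves entirely to the reader.
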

\begin{proof}
The proof relies on combining the statement of Theorem \ref{l_findim_per} and the arguments used
in the proof of Theorem \ref{th_slowvar}. We leave the details to the reader.
\end{proof}

It is interesting to observe that for small $\eps>0$ the process $\xi_x^\eps(\cdot)$ can be interpreted as
a small random perturbation of a deterministic dynamical system defined by the ODE
\begin{equation}\label{determ_curve}
\dot\gamma(t)=\nabla_\lambda H(\gamma(t),0),\qquad \gamma(0)=x.
\end{equation}
\begin{corollary}
For any $x\in\mathbb R^d$
$$
\lim\limits_{\eps\to0}\mathbb E\big\{\sup\limits_{0\leq t\leq T}|\xi^\eps_x(t)-\gamma_x(t)|\big\}=0,
$$
where $\gamma_x(\cdot)$ is a solution of \eqref{determ_curve}.
\end{corollary}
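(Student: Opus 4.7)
The plan is to derive this corollary from the large deviation principle just established in Theorem \ref{t_main_gen} combined with a uniform moment estimate for $\xi^\eps_x(\cdot)$.

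First I would identify the zeros of the rate function $I(\cdot)$. Since $H(x,\lambda)$ is convex in $\lambda$ with $H(x,0)=0$ (the principal eigenvalue of $A_{x,0}$ is $0$ with constant eigenfunction, corresponding to the conservation of mass by the Markov generator), its Legendre transform $L(x,\zeta)=\sup_\lambda(\lambda\cdot\zeta-H(x,\lambda))$ is nonnegative and vanishes iff the supremum is attained at $\lambda=0$, i.e. iff $\zeta=\nabla_\lambda H(x,0)$. Hence $I(\gamma)=0$ iff $\gamma$ is absolutely continuous, $\gamma(0)=x$ and $\dot\gamma(t)=\nabla_\lambda H(\gamma(t),0)$ almost everywhere, i.e. iff $\gamma=\gamma_x$ (with uniqueness for the ODE \eqref{determ_curve} coming from the local Lipschitz regularity of $\nabla_\lambda H(\cdot,0)$, which follows from the simplicity of the principal eigenvalue $\theta(x,\lambda)$ and the analytic perturbation theory already invoked in Section \ref{s_perio}).

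Next I would apply the LDP. Fix $\delta>0$ and let $C_\delta=\{\gamma\in\mathbf{D}([0,T];\mathbb R^d):\sup_{0\leq t\leq T}|\gamma(t)-\gamma_x(t)|\geq\delta,\ \gamma(0)=x\}$. This set is closed in the Skorokhod topology. Since $\gamma_x$ is the unique minimizer of $I$, the goodness of $I$ (compactness of sublevel sets, Lemma \ref{l_compa_in_c}) implies $c(\delta):=\inf_{\gamma\in C_\delta}I(\gamma)>0$. The upper LDP bound from Theorem \ref{t_main_gen} therefore yields
$$
\mathbb P\bigl\{\sup_{0\leq t\leq T}|\xi^\eps_x(t)-\gamma_x(t)|\geq\delta\bigr\}\leq \exp\bigl(-c(\delta)/(2\eps)\bigr)
$$
for all sufficiently small $\eps$.

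Finally, to convert convergence in probability into $L^1$-convergence of the supremum, I would establish the uniform second-moment bound $\sup_{\eps\in(0,1]}\mathbb E\bigl[\sup_{0\leq t\leq T}|\xi^\eps_x(t)-x|^2\bigr]\leq C_T<\infty$. This follows from a direct computation with the generator: substituting $y=x+\eps z$ in \eqref{A} and using $a\in L^1$ together with the super-exponential bound \eqref{lt}, one gets $|A^\eps(|\cdot-x|^2)(y)|\leq C(1+|y-x|)$, so that $M^\eps_t=|\xi^\eps_x(t)-x|^2-\int_0^t A^\eps(|\cdot-x|^2)(\xi^\eps_x(s))\,ds$ is a martingale whose quadratic variation is controlled by the same super-exponential moments of $a$, and Doob's inequality provides the required supremum bound. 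Combining via Cauchy-Schwarz,
$$
\mathbb E\bigl[\sup_{0\leq t\leq T}|\xi^\eps_x(t)-\gamma_x(t)|\bigr]\leq \delta+\bigl(\mathbb E\bigl[\sup_t|\xi^\eps_x(t)-\gamma_x(t)|^2\bigr]\bigr)^{1/2}\mathbb P(C_\delta)^{1/2},
$$
and letting $\eps\to 0$ and then $\delta\to 0$ concludes the proof.

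The main obstacle is not the LDP step (which is immediate once the unique zero of $I$ is identified) but rather verifying the two ingredients behind it: the regularity of $\nabla_\lambda H(x,0)$ in $x$ needed to assert uniqueness of the solution to \eqref{determ_curve}, and the uniform-in-$\eps$ moment bound ensuring uniform integrability of $\sup_t|\xi^\eps_x(t)-\gamma_x(t)|$. Both rely on the super-exponential decay \eqref{lt} and the spectral structure developed in Section \ref{s_perio}.
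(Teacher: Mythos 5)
The paper states this corollary without proof, and your argument is the natural one: identify $\gamma_x$ as the unique zero of the good rate function, apply the LDP upper bound to the closed set $C_\delta$, and upgrade convergence in probability to $L^1$ convergence via a uniform moment bound. All three steps are sound; in particular the closedness of $C_\delta$ in the Skorokhod topology does hold (because $\gamma_x$ is continuous, the map $\gamma\mapsto\sup_t|\gamma(t)-\gamma_x(t)|$ is lower semicontinuous for the metric used in the paper), and the generator computation $|A^\eps(|\cdot-x|^2)(y)|\le C(1+|y-x|)$ together with the quadratic-variation bound of order $\eps(1+|y-x|^2)$ and Gronwall does give the uniform second-moment estimate.

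The one point you should be more careful about is uniqueness for the ODE \eqref{determ_curve}. You attribute the needed Lipschitz regularity of $x\mapsto\nabla_\lambda H(x,0)$ to the simplicity of the principal eigenvalue and perturbation theory, but that machinery only gives smoothness in $\lambda$ for fixed $x$; regularity in $x$ is inherited from the regularity of $\Lambda$ in its slow arguments, and the paper only assumes uniform continuity there (condition \eqref{cond_conti}). With a merely continuous right-hand side the Cauchy problem \eqref{determ_curve} may fail to have a unique solution, in which case the zero set of $I$ is not a singleton and the statement itself needs reinterpretation. This is arguably a gap in the corollary as formulated rather than in your proof, but your argument as written silently assumes uniqueness; either add a Lipschitz (or Osgood) hypothesis on $\Lambda(x,x,\cdot,\cdot)$ as a function of $x$, or restrict the conclusion to the case where \eqref{determ_curve} is uniquely solvable. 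With that caveat the proof is complete.
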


\end{document}